\newtheorem{theorem}{Theorem}[section]
\newtheorem{lemma}[theorem]{Lemma}
\newtheorem{proposition}[theorem]{Proposition}
\newtheorem{corollary}[theorem]{Corollary}
\theoremstyle{definition}
\newtheorem{definition}[theorem]{Definition}
\newtheorem{question}[theorem]{Question}
\newtheorem{remark}[theorem]{Remark}
\newtheorem{example}[theorem]{Example}
\newtheorem*{remark*}{Remark}
\renewcommand{\Re}{\operatorname{Re}}
\newcommand{\Mult}{\operatorname{Mult}}
\newcommand{\C}{\mathbb{C}}
\newcommand{\N}{\mathbb{N}}
\newcommand{\D}{\mathbb{D}}
\newcommand{\T}{\mathbb{T}}
\newcommand{\R}{\mathbb{R}}
\newcommand{\la}{\langle}
\newcommand{\ra}{\rangle}
\newcommand{\eps}{\varepsilon}
\newcommand{\dist}{\operatorname{dist}}
\newcommand{\ones}{\mathbbm{1}}
\newcommand{\wedi}{s}
\title{Inner factors of Dirichlet space functions }
\author[M. Hartz]{Michael Hartz}
\address{Fachrichtung Mathematik, Universit\"at des Saarlandes, 66123 Saarbr\"ucken, Germany}
\email{hartz@math.uni-sb.de}
\author[S. Richter]{Stefan Richter}
\address{Department of Mathematics, The University of Tennessee, Knoxville, TN 37996}
\email{srichter@utk.edu}
\date{\today}
\renewcommand{\MR}[1]{}
\pgfplotsset{compat=1.18}
\numberwithin{equation}{section}
\newcommand*{\Stolz}[4]{%
      \coordinate (L) at (#1:1);        
      \def\myrad{0.78}                 
      \coordinate (A) at ({#1+180-#2}:\myrad);
      \coordinate (B) at ({#1+180+#2}:\myrad);
      \path[#3!25, draw=#3, thick]
            (L) -- (A) arc ({#1+180-#2}:{#1+180+#2}:\myrad) -- cycle;
      \fill (L) circle (0.015);
      \node[font=\scriptsize, right] at (L) {$\lambda_{#4}$};
      \coordinate (Mid) at ({#1+180-0.8*#2}:\myrad*0.55);
      \node[font=\scriptsize,#3!80!black] at (Mid) {$Z_{#4}$};
    }
\subjclass[2020]{Primary 30J05; Secondary 30H25, 46E22}
\keywords{Dirichlet space, singular inner function, zero sets, Shapiro--Shields condition, Carleson set}
\begin{document}

\begin{abstract}
  Every function in the Dirichlet space on the unit disc has an inner/outer factorization. We study which inner functions occur in this way.
  For Blaschke products, this is the well known question of which subsets of the disc are zero sets for the Dirichlet space.
  We also consider singular inner factors, and in particular prove an analogue of the Shapiro--Shields theorem in this setting. Our results on singular inner factors also yield new sufficient conditions for zero sets.
\end{abstract}

\maketitle

\section{Introduction}

The Dirichlet space $\mathcal{D}$ is the space of analytic functions $f$ on the open unit disc $\mathbb{D}$ such that
\begin{equation*}
  \int_{\mathbb{D}} |f'|^2 \, d A < \infty,
\end{equation*}
where $A$ denotes the normalized area measure on $\mathbb{D}$. Since $\mathcal{D}$ is contained
in the Hardy space $H^2$, every non-zero function $f \in \mathcal{D}$ admits an inner/outer factorization $f = f_i f_o$. Whereas $f_i$ only belongs to $\mathcal{D}$ if it is a finite Blaschke product, the outer factor $f_o$ is always in $\mathcal{D}$. Let us call an inner function $I$ an \emph{inner factor} of $\mathcal{D}$ if
it occurs as the inner factor of some function in $\mathcal{D}$, i.e. if $IH^2\cap \mathcal D\ne (0)$. Equivalently, there exists a non-zero function $g \in \mathcal{D}$ such that $I g \in \mathcal{D}$; see Proposition \ref{prop:inner_factor_char} below.
The basic question we study is the following.
\begin{question}
  \label{quest:inner_factors}
  Which inner functions are inner factors of $\mathcal{D}$?
\end{question}

Every inner function is a product of a Blaschke product
\begin{equation*}
  B(z) = z^k \prod_{n} \frac{|z_n|}{z_n} \frac{z_n - z}{1 - \overline{z_n} z},
\end{equation*}
where $k \in \mathbb{N}_0$ and $\sum_{n} (1 - |z_n|) < \infty$, and
a singular inner function
\begin{equation*}
  S_\mu(z) = \exp \Big( - \int_{\mathbb{T}} \frac{w + z}{w - z} \, d \mu(w) \Big),
\end{equation*}
where $\mu$ is a positive measure on the unit circle $\mathbb{T}$ that is singular with respect to Lebesgue measure on $\mathbb{T}$, perhaps up to a unimodular constant factor.
It is not hard to show that $B S_\mu$ is a factor of $\mathcal{D}$ if and only if both $B$ and $S_\mu$ are factors; see Corollary \ref{cor:inner_factor_product} below. Thus, Question \ref{quest:inner_factors} can be divided into these two cases.
The question of which Blaschke products are inner factors of $\mathcal{D}$ is the question
of which subsets of the disc are zero sets of functions in $\mathcal{D}$.
This problem has been studied extensively, see e.g.\ \cite[Chapter 4]{EKM+14}.

In this article, we will study Question \ref{quest:inner_factors} also for singular inner functions, in particular
in the case when the measure $\mu$ is atomic.
It turns out that results about singular inner factors imply results about zero sets.
The basic idea is that Carleson's formula allows us to bound the Dirichlet integral of $B g$,
where $B$ is a Blaschke product, in terms of the Dirichlet integral of $S g$,
where $S$ is a singular inner function whose measure is atomic and related to the zeros of $B$ in an appropriate way; see
Theorem \ref{thm:atomic-zero} for a precise statement.
We will use this idea to establish new sufficient conditions for zero sets in $\mathcal{D}$.
We do not know if one can conversely deduce results about singular inner functions from
results about zero sets; see Question \ref{ques:SingVsZeros} for a more precise formulation of this question.
Nonetheless, many of our results can be regarded as evidence in favor of such a general equivalence
between singular inner factors and zero sets.

We now discuss some of our main results. Regarding zero sets, we will prove the following result, which essentially allows us to reduce to the case of sequences in the disc that converge to a single point on the boundary $\mathbb{T}$ of $\mathbb{D}$.

\begin{theorem}
  \label{thm:one_point_intro}
  If $S \subset \mathbb{D}$ is a set without accumulation points in $\mathbb{D}$ that is not a zero set for $\mathcal{D}$, then there exists a sequence $\{z_n\}$ in $S$ that converges to a point on $\mathbb{T}$ such that $\{z_n\}$ is not a zero set for $\mathcal{D}$.
\end{theorem}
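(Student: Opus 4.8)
The plan is to prove the contrapositive: under the assumption that \emph{every} sequence in $S$ converging to a single point of $\T$ is a zero set for $\mathcal D$, I will show that $S$ itself is a zero set. Throughout I use that $\mathcal D$ is a reproducing kernel Hilbert space, with kernel $k_a$, so that point evaluations $f \mapsto f(a) = \la f, k_a\ra$ are bounded. For a set $T \subset \D$ without accumulation points in $\D$ (so $T$ is countable), write $\mathcal D_T = \{f \in \mathcal D : f|_T = 0\}$, a closed subspace; by definition $T$ is a zero set exactly when $\mathcal D_T \neq (0)$. To quantify this I fix a base point $a \notin T$ and consider, for finite $F \subset T$,
\[
  m_a(F) = \inf\{\|f\|_{\mathcal D}^2 : f \in \mathcal D,\ f(a) = 1,\ f|_F = 0\},
\]
which is finite and attained (the admissible set is a nonempty closed affine subspace, finite sets being zero sets via polynomials), and I set $m_a(T) = \sup_F m_a(F)$. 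A standard Hilbert space computation gives $m_a(F) = \|P_{\mathcal D_F} k_a\|^{-2}$, and letting $F$ exhaust $T$ yields $m_a(T) = \|P_{\mathcal D_T} k_a\|^{-2}$; in particular $m_a(T) = \infty$ if and only if $k_a \perp \mathcal D_T$, i.e.\ every function in $\mathcal D$ vanishing on $T$ also vanishes at $a$. Consequently $T$ fails to be a zero set precisely when $m_a(T) = \infty$ for every base point $a$.

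The one substantial external input is that a finite union of zero sets for $\mathcal D$ is again a zero set. This is classical (see \cite[Chapter 4]{EKM+14}); the mechanism is that every zero set is already the zero set of a function in the algebra $\mathcal D \cap H^\infty$, and the product of two such bounded witnesses is a bounded witness for the union. Granting this, call $\zeta \in \T$ a \emph{bad point} if $S \cap \Delta(\zeta,r)$ is not a zero set for every $r > 0$, where $\Delta(\zeta,r) = \{z \in \C : |z - \zeta| < r\}$. I claim that if $S$ is not a zero set, then a bad point exists. If there were none, then each $\zeta \in \T$ would admit $r_\zeta > 0$ with $S \cap \Delta(\zeta,r_\zeta)$ a zero set; by compactness finitely many such discs cover $\T$, and hence a neighborhood of $\T$ in $\overline{\D}$. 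The points of $S$ outside this neighborhood lie in a compact subset of $\D$ and are therefore finite in number. Thus $S$ would be a finite union of zero sets together with a finite set, hence a zero set by the union property --- contrary to hypothesis.

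It remains to extract, from a bad point $\zeta$, a sequence converging to $\zeta$ that is not a zero set. Fix a sequence of base points $\{a_j\} \subset \D \setminus S$ possessing an accumulation point in $\D$. For each $k$ the set $S \cap \Delta(\zeta, 1/k)$ is not a zero set, so $m_{a_j}(S \cap \Delta(\zeta,1/k)) = \infty$ for every $j$; hence for each $j \le k$ I may choose a finite subset $F_{k,j} \subset S \cap \Delta(\zeta,1/k)$ with $m_{a_j}(F_{k,j}) \ge k$, and I set $F_k = \bigcup_{j \le k} F_{k,j}$, a finite set, so that $m_{a_j}(F_k) \ge k$ for all $j \le k$ by monotonicity of $m_{a_j}$. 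Put $T = \bigcup_k F_k \subseteq S$. Since $F_k \subset \Delta(\zeta,1/k)$, every neighborhood of $\zeta$ contains all but finitely many points of $T$, so, listed in any order, $T$ is a sequence converging to $\zeta \in \T$. Moreover, for each fixed $j$ we have $m_{a_j}(T) \ge m_{a_j}(F_k) \ge k$ for all $k \ge j$, whence $m_{a_j}(T) = \infty$ and $k_{a_j} \perp \mathcal D_T$. Thus any $f \in \mathcal D_T$ vanishes on $\{a_j\}$, which accumulates in $\D$, forcing $f \equiv 0$; that is, $\mathcal D_T = (0)$, so $T$ is not a zero set, as desired.

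The genuinely substantive step is the closure of zero sets under finite unions, and I expect it to be the main obstacle, since $\mathcal D$ is not an algebra: the argument reduces to producing a \emph{bounded} witness for each zero set (equivalently, to the coincidence of zero sets for $\mathcal D$ and for $\mathcal D \cap H^\infty$), which is where the special structure of the Dirichlet space enters. The remaining technical points --- attainment and the projection-limit identity in the variational characterization, and the simultaneous control over countably many base points in the extraction --- are routine Hilbert space arguments.
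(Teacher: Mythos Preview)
Your argument is correct and follows essentially the same route as the paper's proof (Theorem~\ref{thm:zero_subset}): locate a bad boundary point by compactness and the finite-union property, then use a quantitative functional on finite subsets (your $m_a(F)=\|P_{\mathcal D_F}k_a\|^{-2}$ is exactly $\gamma(F)^{-2}$ in the paper's notation, since $k_0=1$) to extract finite pieces whose union is a uniqueness set. The one genuine variation is your closing step: the paper uses a single base point together with the subset property to pass from $\gamma(F)=0$ to ``$F$ is a uniqueness set,'' whereas you force vanishing on an accumulating family of base points and invoke the identity theorem, thereby avoiding an explicit appeal to the subset property at that stage. Note, however, that the subset property is still implicitly present in your framework, since your working equivalence ``$T$ is a zero set $\Leftrightarrow \mathcal D_T\neq(0)$'' coincides with the paper's definition (``some $f$ vanishes \emph{exactly} on $T$'') only because subsets of zero sets are zero sets in $\mathcal D$; this is harmless here but worth making explicit.
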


In fact, we will show that such a result holds in many Banach spaces of analytic functions on the disc, see Theorem \ref{thm:zero_subset}.

Our next result regarding zero sets allows us to push points out towards the boundary.

\begin{theorem}
  Let $r_n \in (0,1)$, $t_n \in \mathbb{R}$ and assume that $\{r_n e^{ i t_n}: n \in \mathbb{N} \}$ is a zero set for $\mathcal{D}$. Let $s_n \in [r_n,1)$. Then $\{s_n  e^{ i t_n}: n \in \mathbb{N} \}$ is also a zero set for $\mathcal{D}$.
\end{theorem}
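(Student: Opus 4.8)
The plan is to recast zero sets through the reproducing kernel of $\mathcal{D}$ and then to prove a quantitative monotonicity: pushing interpolation nodes radially outward can only decrease the relevant extremal quantity. Endow $\mathcal{D}$ with the Hilbert space norm $\|f\|^2 = |f(0)|^2 + \int_{\D}|f'|^2\,dA$; this does not change whether a set is a zero set, and the reproducing kernel is $k(z,w) = 1 + \log\frac{1}{1 - z\overline{w}}$, so that $k_0 := k(\cdot,0) \equiv 1$ is the constant function and $k_z(0) = k(0,z) = 1$ for every $z$. I may assume $\{z_n\}$ is infinite (otherwise it is trivially a zero set, as is $\{s_n e^{it_n}\}$) and that $0 \notin \{z_n\}$ since $r_n > 0$; being a zero set, $\{z_n\}$ is a Blaschke sequence, hence so is $\{s_n e^{it_n}\}$. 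Dividing a witnessing function by a power of $z$, one sees that for such a sequence $\{z_n\}$ is a zero set if and only if $1 \notin \overline{\operatorname{span}}\{k_{z_n} : n \in \N\}$. Writing $G_N = (k(z_i,z_j))_{i,j \le N}$ and using $k_{z_j}(0) = 1$, the squared distance from $1$ to $\operatorname{span}\{k_{z_1},\dots,k_{z_N}\}$ equals $1 - \mathbf{1}^{\top} G_N^{-1}\mathbf{1}$; since these distances decrease in $N$, the sequence $\{z_n\}$ is a zero set if and only if $\sup_N \mathbf{1}^{\top} G_N^{-1}\mathbf{1} < 1$.

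In view of this criterion it suffices to prove the monotonicity statement: if $G_N^z$ and $G_N^w$ denote the Gram matrices for the nodes $z_j = r_j e^{it_j}$ and $w_j = s_j e^{it_j}$ with $s_j \ge r_j$, then $\mathbf{1}^{\top} (G_N^w)^{-1}\mathbf{1} \le \mathbf{1}^{\top} (G_N^z)^{-1}\mathbf{1}$ for every $N$; taking the supremum over $N$ then shows $\{s_n e^{it_n}\}$ is a zero set. I would prove this by moving one node at a time, reducing to the case where a single node $z_1$ is pushed radially outward to $w_1$ while the remaining nodes are held fixed and span a subspace $W$. By the rank-one update formula for distances, $\dist(1, W \oplus \C k_x)^2 = \dist(1,W)^2 - F(x)$, where $F(x) = |e(x)|^2 / \dist(k_x, W)^2$ and $e := 1 - P_W 1$ is the fixed residual of the constant function (so $e(z_j) = 0$ for the fixed nodes). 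The single-node statement is thus equivalent to the inequality $F(w_1) \le F(z_1)$, i.e. that $|e(x)|^2 / \dist(k_x,W)^2$ does not increase as $x$ moves radially toward $\T$.

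The inequality $F(w_1) \le F(z_1)$ is the heart of the matter, and I expect it to be the main obstacle. It cannot be obtained by operator monotonicity: the perturbation $G_N^w - G_N^z$ is an indefinite arrowhead matrix, so $G_N^w \succeq G_N^z$ fails in general, and one must exploit the specific form of $k(z,w) = 1 + \log\frac{1}{1 - z\overline{w}}$. Concretely I would express $\dist(k_x, W)^2$ as a ratio of Gram determinants and study $F(x)$ as a function of $|x|$ along the ray through $x$, expecting the logarithmic kernel to furnish the required radial decay after a Schur-complement computation. As a consistency check, when there are no other nodes ($W = \{0\}$) the claim reduces to $1/k(w_1,w_1) \le 1/k(z_1,z_1)$, which is immediate from the radial monotonicity of $|x| \mapsto k(x,x) = 1 + \log\frac{1}{1 - |x|^2}$. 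Since the whole argument is quantitative and independent of Theorem~\ref{thm:one_point_intro}, I could alternatively first invoke that theorem to reduce to sequences clustering at a single boundary point, where the more rigid geometry of the nodes should simplify the determinant estimates.
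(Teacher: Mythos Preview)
Your reduction to the Gram-matrix criterion and to the single-node monotonicity $F(w_1)\le F(z_1)$ is correct and is equivalent to the paper's quantitative statement $\gamma_{\mathcal D}(W)\ge\gamma_{\mathcal D}(Z)$, but the proposal stops exactly where the real work begins. You do not prove $F(w_1)\le F(z_1)$; you only say you ``expect the logarithmic kernel to furnish the required radial decay after a Schur-complement computation.'' That expectation needs independent justification: the paper's appendix constructs a radial complete Pick space on $\D$ (kernel $(1-a z\bar w-(1-a)(z\bar w)^2)^{-1}$ for small $a>0$) in which precisely this radial monotonicity of $\gamma$ fails already for a two-point zero set. Hence no argument using only generic RKHS or Gram-determinant manipulations can succeed; Dirichlet-specific input is essential, and your proposal supplies none. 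The suggested fallback via Theorem~\ref{thm:one_point_intro} does not help either, since the same single-node inequality must still be established for the reduced sequences.

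The paper supplies the missing input through Carleson's formula together with a pointwise identity for Dirichlet extremal functions. If $\varphi=B_Z f$ is the extremal function of $I(Z)$, Carleson's formula gives $\|B_W f\|^2=\sum_n P[|f|^2](w_n)+\|f\|^2$. The Dirichlet-specific step is Lemma~\ref{lem:ExtremalPoisson} (derived from an identity in \cite{RS94}): for any extremal function $\varphi$ and $0<r\le s<1$ one has $P[|\varphi|^2](se^{it})\le P[|\varphi|^2](re^{it})+(1-r/s)$. Since $|\varphi|=|f|$ a.e.\ on $\T$, summing over $n$ yields $\|B_W f\|^2\le 1+\sum_n(1-r_n/s_n)\le |B_W(0)/B_Z(0)|^2$, from which $\gamma(W)\ge\gamma(Z)$ follows at once. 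This argument moves all nodes simultaneously and bypasses determinant computations entirely.
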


The proof of this result depends on finer properties of the Dirichlet space. We will show a quantitative sharpening of this theorem, and in the appendix, we will exhibit a circularly invariant complete Pick space on the disc in which the quantitative sharpening does not hold.

Broadly speaking, many of the known sufficient conditions for zero sets in $\mathcal{D}$ can be divided into two classes: those that come from rapid convergence of the moduli to $1$ and those that come from a restriction of the distribution of the arguments.
A famous result of Shapiro and Shields \cite{SS62} (see also \cite[Theorem 4.2.1]{EKM+14}) belongs to the first class. It says that if the sequence $\{z_n\}$ satisfies $\sum_n \frac{1}{\log \frac{1}{1-|z_n|}}< \infty$, then $\{z_n\}$ is a zero set for $\mathcal{D}$. Moreover, Nagel--Rudin--Shapiro showed that this condition is best possible among conditions that only depend on the modulus; see \cite{NRS82} or \cite[Theorem 4.2.2]{EKM+14}. We will establish the following analogue of these results for singular inner factors.

\begin{theorem}
  \label{thm:SS_intro}
  The following are equivalent for a summable sequence $\{a_n\}$ of positive numbers:
  \begin{enumerate}[label=\normalfont{(\roman*)}]
    \item
  for any sequence $\{\lambda_n\}$ in $\mathbb{T}$, the singular inner function $S_\mu$
  associated with the measure $\mu = \sum_n a_n \delta_{\lambda_n}$ is a factor of $\mathcal{D}$;
  \item for any sequence $\{\lambda_n\}$ in $\mathbb{T}$ with $\lambda_n \to 1$, the singular inner function $S_\mu$
  associated with the measure $\mu = \sum_n a_n \delta_{\lambda_n}$ is a factor of $\mathcal{D}$;
\item $\sum_n \frac{1}{\log ( 1+  \frac{1}{a_n})}< \infty$.
  \end{enumerate}
\end{theorem}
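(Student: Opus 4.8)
The plan is to prove the cyclic chain (i) $\Rightarrow$ (ii) $\Rightarrow$ (iii) $\Rightarrow$ (i). The implication (i) $\Rightarrow$ (ii) is immediate, since (ii) only asserts the conclusion of (i) for the subclass of sequences converging to $1$. The substance therefore lies in the positive direction (iii) $\Rightarrow$ (i) and the sharpness direction (ii) $\Rightarrow$ (iii). Throughout I would use the reformulation from Proposition~\ref{prop:inner_factor_char}: $S_\mu$ is a factor of $\mathcal{D}$ if and only if there is $g \in \mathcal{D}$ with $g(0) \neq 0$ and $S_\mu g \in \mathcal{D}$, equivalently $\inf\{ D(S_\mu g) : g \in \mathcal{D},\ g(0) = 1\} < \infty$, where $D$ denotes the Dirichlet integral. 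The key tool for controlling this infimum is Carleson's formula, which expresses $D(S_\mu g)$ as $D(g)$ plus an explicit nonnegative singular contribution $\mathcal{E}_\mu(g)$, a double boundary integral built from $\mu$ and the boundary values of $g$. The factor problem thus becomes the problem of finding a single nonzero $g$ for which both $D(g)$ and $\mathcal{E}_\mu(g)$ are finite.

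For (iii) $\Rightarrow$ (i), fix an arbitrary sequence $\{\lambda_n\}$ in $\mathbb{T}$ and construct a single outer test function $g$, normalized by $g(0) = 1$, whose boundary modulus is made small on an arc around each $\lambda_n$ whose length is governed by $a_n$. Making $g$ suitably small near $\lambda_n$ is what allows the rapidly oscillating factor associated with the atom $a_n \delta_{\lambda_n}$ to be absorbed so that $S_\mu g$ stays in $\mathcal{D}$; against the normalization $g(0)=1$, the Dirichlet cost of one such dip is, by the logarithmic capacity estimate characteristic of $\mathcal{D}$, of order $\frac{1}{\log(1 + 1/a_n)}$. For well-separated $\lambda_n$ these capacity costs are essentially additive, and Carleson's formula lets me bound $D(g) + \mathcal{E}_\mu(g)$ by a constant times $\sum_n \frac{1}{\log(1 + 1/a_n)}$, which is finite by (iii). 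When the $\lambda_n$ cluster the dips overlap, but overlapping only reduces the total cost (an atom of larger mass is cheaper than several separated atoms of smaller mass), so the bound persists. This produces the desired nonzero $g$ with $S_\mu g \in \mathcal{D}$ for every configuration.

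For (ii) $\Rightarrow$ (iii) I would argue by contraposition: assuming $\sum_n \frac{1}{\log(1 + 1/a_n)} = \infty$, I must exhibit one sequence $\lambda_n \to 1$ for which $S_\mu$ is not a factor. I place the atoms along a sequence tending to $1$ that is separated at the scale of each atom's influence, so that the atoms act independently. The crux is a lower bound, via Carleson's formula, showing that the contribution of the $n$-th atom to $D(S_\mu g)$ is at least a constant times $\frac{1}{\log(1 + 1/a_n)}$ for every $g \in \mathcal{D}$ with $g(0)=1$. The mechanism is a dichotomy: if $g$ does not become small near $\lambda_n$, then the singular term $\mathcal{E}_\mu(g)$ already carries a large local contribution there; if $g$ is forced to be small near $\lambda_n$, it must pay the corresponding logarithmic capacity. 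The separation of the atoms makes these local lower bounds additive, so $D(S_\mu g) \gtrsim \sum_n \frac{1}{\log(1 + 1/a_n)} = \infty$ for every nonzero $g$, whence $S_\mu$ is not a factor.

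The main obstacle is the sharpness direction (ii) $\Rightarrow$ (iii). In (iii) $\Rightarrow$ (i) the function $g$ is tailored to the given atoms, whereas here a single configuration must defeat every candidate $g$ at once; this is the analogue of the Nagel--Rudin--Shapiro phenomenon for zero sets. The difficulty is to make the per-atom lower bound genuinely uniform in $g$: one must choose the positions $\lambda_n \to 1$ — the separation scales and the rate of approach to $1$ — so that no admissible $g$ can cheaply be small near all the $\lambda_n$ simultaneously, which is exactly where the capacity dichotomy above has to be quantified carefully on a capacity-significant portion of the influence arcs. It is precisely the logarithmic scale $\frac{1}{\log(1 + 1/a_n)}$, and not any faster-decaying surrogate, that makes the upper bound of the positive direction and the lower bound of the sharpness direction meet.
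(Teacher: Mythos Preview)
Your plan diverges from the paper in both nontrivial directions, and the sharpness direction has a real gap.

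For (iii) $\Rightarrow$ (i), the paper does not build a single outer test function. Instead it computes, for a single atom $a\,\delta_\lambda$, the extremal function of $S_{a\delta_\lambda} H^2 \cap \mathcal{D}$ explicitly (Theorem~\ref{ExtremalFunction}) and reads off the asymptotic $1-\gamma(S_{a\delta_\lambda})\sim \tfrac{1}{2\log(1/a)}$ (Corollary~\ref{cor:atomic_gamma}). Because $\mathcal{D}$ has the complete Pick property, each extremal function is a contractive multiplier, so the infinite product of the per-atom extremal functions converges to a nonzero multiplier divisible by $S_\mu$ (Lemma~\ref{lem:infinite_products}). This handles arbitrary configurations of $\{\lambda_n\}$ with no separation hypothesis and no ad hoc analysis of overlapping dips. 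Your outer-function construction is a plausible alternative, but the step ``overlapping only reduces the total cost'' is asserted rather than proved, and the per-dip cost $\asymp \tfrac{1}{\log(1+1/a_n)}$ is stated as a capacity heuristic without a construction; the paper's route avoids both issues by working multiplicatively with extremal functions.

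For (ii) $\Rightarrow$ (iii), the paper does not attempt a direct lower bound. It imports the Nagel--Rudin--Shapiro theorem for zero sets: if $\sum_n \tfrac{1}{\log(1/(1-r_n))}=\infty$ then some sequence $\{r_n w_n\}$ is a uniqueness set; Theorem~\ref{thm:zero_subset} then extracts a subsequence converging to a single boundary point that is still a uniqueness set, and Corollary~\ref{cor:boundary_implies_zero} transfers this to the statement that the associated atomic singular inner function is not a factor. Your dichotomy (``either $g$ is not small near $\lambda_n$ and $\mathcal{E}_\mu(g)$ is large there, or $g$ is small and pays capacity'') is exactly the heart of a Nagel--Rudin--Shapiro type argument, and you have not supplied the mechanism that makes it work uniformly over all $g$. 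In particular, nothing in your sketch prevents a single $g$ from being small on \emph{all} the influence arcs at subcritical total capacity cost once the $\lambda_n$ accumulate at $1$; ruling this out is precisely the delicate point, and ``separation makes the lower bounds additive'' is not an argument here because separation at scale $a_n$ is incompatible with $\lambda_n\to 1$ unless the $a_n$ decay, which is what you are trying to disprove. As written, this direction is a restatement of the difficulty rather than a proof; the paper sidesteps it entirely by reducing to the known zero-set result.
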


From this result, we obtain sufficient conditions for zero sets in $\mathcal{D}$.
Here, we mention one.
If $\lambda \in \mathbb{T}$ and $K > 1$, let
\begin{equation*}
  \Omega_K(\lambda) = \{z \in \mathbb{D}: |\lambda - z| \le K(1 - |z|) \}
\end{equation*}
be the Stolz angle with vertex $\lambda$ and aperture $K$.

\begin{theorem}
  \label{thm:zero_decomp_intro}
  Let $Z$ be a sequence in $\mathbb{D}$. Suppose that $Z$ can be partitioned as $Z = \bigcup_{k} Z_k$, such that
  \begin{itemize}
    \item there exist $\lambda_k \in \mathbb{T}$ and $K(k) > 1$ such that $Z_k \subset \Omega_{K(k)}(\lambda_k)$, and
    \item setting $a_k = 2 K(k)^2 \sum_{z \in Z_k} (1 - |z|)$, we have
      \begin{equation}
        \label{eqn:zero_boundary_hyp}
        \sum_{k} \frac{1}{\log(1 + \frac{1}{a_k})} < \infty.
      \end{equation}
  \end{itemize}
  Then $Z$ is a zero set for the Dirichlet space.
\end{theorem}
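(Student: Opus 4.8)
The plan is to read this off as the combination of two ingredients already in hand: the Shapiro--Shields analogue (Theorem \ref{thm:SS_intro}) and the Carleson-formula comparison between Blaschke products and atomic singular inner functions (Theorem \ref{thm:atomic-zero}). First I would observe that the hypothesis \eqref{eqn:zero_boundary_hyp} is precisely condition (iii) of Theorem \ref{thm:SS_intro} for the sequence $\{a_k\}$, so the entire point is to manufacture the correct atomic measure from the geometric data and then transfer the conclusion from singular inner factors to zero sets. Before invoking Theorem \ref{thm:SS_intro} I would record that $\{a_k\}$ is automatically summable: convergence of $\sum_k \frac{1}{\log(1+\frac{1}{a_k})}$ forces $a_k \to 0$, and since $a_k \log(1 + \frac{1}{a_k}) \to 0$ as $a_k \to 0$ one has $a_k \le \frac{1}{\log(1 + 1/a_k)}$ for all large $k$, whence $\sum_k a_k < \infty$. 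This summability is used twice: it makes $\{a_k\}$ an admissible sequence for Theorem \ref{thm:SS_intro}, and, because $K(k) > 1$ gives $\sum_{z \in Z_k}(1-|z|) \le \tfrac12 a_k$, it shows that $\sum_{z \in Z}(1-|z|) \le \tfrac12 \sum_k a_k < \infty$, so that $Z$ satisfies the Blaschke condition and the Blaschke product $B$ with zero set $Z$ is genuinely defined.

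Next I would apply the implication (iii) $\Rightarrow$ (i) of Theorem \ref{thm:SS_intro} to the sequence $\{a_k\}$ together with the boundary points $\{\lambda_k\}$ supplied by the partition. This yields that the singular inner function $S_\mu$ associated with $\mu = \sum_k a_k \delta_{\lambda_k}$ is a factor of $\mathcal{D}$, so by Proposition \ref{prop:inner_factor_char} there is a non-zero $g \in \mathcal{D}$ with $S_\mu g \in \mathcal{D}$. It is worth noting that one need not assume the $\lambda_k$ to be distinct, since Theorem \ref{thm:SS_intro} quantifies over arbitrary sequences in $\mathbb{T}$ and coincident points merely combine their masses in $\mu$.

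The geometry now enters through Theorem \ref{thm:atomic-zero}. Because each cluster $Z_k$ lies inside the Stolz angle $\Omega_{K(k)}(\lambda_k)$ and $a_k = 2K(k)^2 \sum_{z \in Z_k}(1-|z|)$ is exactly the mass prescribed by that theorem, the comparison it provides controls the Dirichlet integral $\int_{\mathbb{D}} |(Bg)'|^2 \, dA$ in terms of the Dirichlet integral $\int_{\mathbb{D}} |(S_\mu g)'|^2 \, dA$ (together with the $H^2$-norm of $g$), for the function $g$ produced above. Since $S_\mu g \in \mathcal{D}$ and $g \in H^2$, this forces $Bg \in \mathcal{D}$. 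As $B \not\equiv 0$ and $g \not\equiv 0$, the product $Bg$ is a non-zero element of $\mathcal{D}$ vanishing on $Z$ with the correct multiplicities, and therefore $Z$ is a zero set for $\mathcal{D}$.

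Since the analytic heart of the matter is packaged inside the two quoted theorems, the only real obstacle is to make sure that the normalization $a_k = 2K(k)^2 \sum_{z \in Z_k}(1-|z|)$ matches the mass demanded by Theorem \ref{thm:atomic-zero}: the factor $K(k)^2$ is precisely what absorbs the distortion of the Carleson kernel as the zeros of $Z_k$ spread through a Stolz angle of aperture $K(k)$, and the factor $2$ supplies the slack in that estimate. Everything else is bookkeeping---assembling the cluster-wise atoms into the single measure $\mu$ and reading off the conclusion.
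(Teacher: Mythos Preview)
Your argument is correct and follows the paper's route: combine Theorem~\ref{thm:SS_intro} with Theorem~\ref{thm:atomic-zero}, using the Stolz-angle estimate to compare the two normalizations. One phrasing is slightly off: the mass $a_k = 2K(k)^2\sum_{z\in Z_k}(1-|z|)$ is not ``exactly the mass prescribed'' by Theorem~\ref{thm:atomic-zero}; that theorem asks for $2\sum_{z\in Z_k}\frac{|\lambda_k-z|^2}{1-|z|^2}$, and the Stolz condition $|\lambda_k-z|\le K(k)(1-|z|)$ gives
\[
2\sum_{z\in Z_k}\frac{|\lambda_k-z|^2}{1-|z|^2}\le 2K(k)^2\sum_{z\in Z_k}\frac{(1-|z|)^2}{1-|z|^2}\le a_k,
\]
which is the inequality the paper records (via Theorem~\ref{thm:zero_decomp}) and which your final paragraph alludes to. With that clarification your outline matches the paper.
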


\begin{figure}[ht]
  \begin{tikzpicture}[scale=2.5]
    \draw[thick] (0,0) circle (1);


      \Stolz{  0}{60}{red}{1}
      \Stolz{ 20}{30}{blue}{2}
      \Stolz{ 40}{15}{purple}{3}

    \foreach \ang/\radpt in {0/0.55,10/0.65,-10/0.65,5/0.70,-5/0.70}{%
      \fill[red]   ({\ang}:{\radpt}) circle (0.013);
    }

    \foreach \ang/\radpt in {20/0.70,22/0.71,18/0.69,24/0.70,16/0.70}{%
      \fill[blue]  ({\ang}:{\radpt}) circle (0.013);
    }

    \foreach \ang/\radpt in {40/0.60,38/0.70,42/0.70}{%
      \fill[purple] ({\ang}:{\radpt}) circle (0.013);
    }
  \end{tikzpicture}
    \caption{The setting of Theorem \ref{thm:zero_decomp_intro}}
    \label{fig:Stolz}
\end{figure}
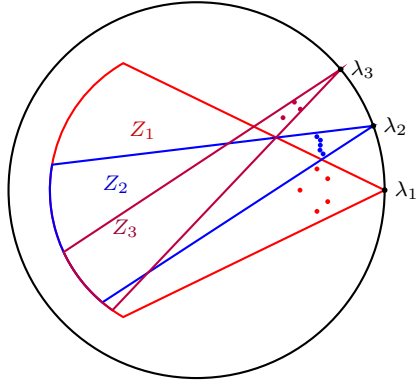

At one extreme, if we take each $Z_k$ to be a singleton $\{z_k\}$, then by choosing $a_k=3(1-|z_k|)$  and $\lambda_k=z_k/|z_k|$ we recover the Shapiro--Shields theorem. At the other extreme, if $Z_1 = Z$ and all other $Z_k$ are empty, then we recover the known  fact that every Blaschke sequence contained in a Stolz angle is a zero set for $\mathcal{D}$.

We turn to sufficient conditions that come from the distribution of the arguments.
Recall that a compact set $E \subset \mathbb{T}$ is a Carleson set if
\begin{equation*}
  \int_{\mathbb{T}} \log\Big( \frac{1}{\dist(z,E)} \Big) \, |dz| < \infty.
\end{equation*}
Let $A^\infty$ denote the algebra of all analytic functions on $\mathbb{D}$ all of whose derivatives are bounded. Clearly, $A^\infty \subset \mathcal{D}$.
Zero sets for $A^\infty$ have been completely characterized by Caughran \cite{CS69},  Taylor--Williams \cite{TW70} and Nelson \cite{Nelson71}. For ease of reference, we state a combination of their results.

\begin{theorem}[Caughran, Taylor--Williams, Nelson]
  \label{thm:CTWN}
  Let $Z = \{z_n\}$ be a sequence in $\mathbb{D} \setminus \{0\}$. The following are equivalent:
  \begin{enumerate}[label=\normalfont{(\roman*)}]
    \item $Z$ is a zero set for $A^\infty$;
    \item $Z$ is Blaschke and the set $\overline{\{\frac{z_n}{|z_n|}\}}$ is a Carleson set;
    \item $Z$ is Blaschke and $\int_{\mathbb{T}} \log \Big( \frac{1}{\dist(z,Z)} \Big) \, |dz| < \infty$.
  \end{enumerate}
\end{theorem}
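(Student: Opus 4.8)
The plan is to prove the stated equivalences by establishing the cycle (i) $\Leftrightarrow$ (ii) and then the purely measure-theoretic equivalence (ii) $\Leftrightarrow$ (iii). Throughout I will use that $A^\infty \subset H^\infty$, so any $f \in A^\infty$ vanishing on $Z$ is a bounded analytic function; this immediately forces $Z$ to be a Blaschke sequence, supplying the Blaschke condition that appears in all three statements and letting me form the Blaschke product $B$ with zero set exactly $Z$. Set $E = \overline{\{z_n/|z_n|\}}$.

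For (i) $\Rightarrow$ (ii), let $f \in A^\infty$, $f \not\equiv 0$, vanish on $Z$. Since $f$ is continuous on $\overline{\mathbb{D}}$ and vanishes at every $z_n$, it vanishes on $E$; as every derivative of $f$ is continuous as well and the points $z_n$ accumulate on $E$, the Taylor expansion at a point of $E$ shows $f$ vanishes there to infinite order, so $|f(w)| \le C_N \dist(w,E)^N$ for every $N$ and all $w$ near $E$. On the other hand, $f \in H^\infty \setminus \{0\}$ gives $\log|f| \in L^1(\mathbb{T})$, hence $\int_{\mathbb{T}} \log(1/|f|)\,|dw| < \infty$. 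Taking $N = 1$, the pointwise bound gives $\log(1/|f(w)|) \ge \log(1/\dist(w,E)) - \log C_1$ near $E$, so $\int_{\mathbb{T}} \log(1/\dist(w,E))\,|dw|$ is dominated by $\int_{\mathbb{T}} \log(1/|f|)\,|dw|$ up to an additive constant, and is therefore finite. Thus $E$ is a Carleson set.

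For (ii) $\Rightarrow$ (i), the constructive core, I start from the Carleson set $E$ and build an outer function $g \in A^\infty$ that vanishes to infinite order precisely on $E$ and is bounded away from $0$ elsewhere on $\overline{\mathbb{D}}$; the integrability of $\log(1/\dist(\cdot,E))$ is exactly what makes the defining outer integral converge, and this is the classical Carleson--Taylor--Williams flat-function construction. I then claim $f = B g \in A^\infty$ has zero set $Z$. On any compact subset of $\overline{\mathbb{D}} \setminus E$ only finitely many zeros of $B$ occur, so $B$ is smooth up to the boundary there and the product causes no trouble. Near $E$ the derivatives of $B$ grow only polynomially in $1/\dist(\cdot,\mathbb{T})$ — via the Schwarz--Pick estimate $|B'(z)| \le (1-|z|^2)^{-1}$ and Cauchy estimates for the higher derivatives — while $g$ and all its derivatives decay faster than any power of $\dist(\cdot,E)$; expanding $(Bg)^{(n)}$ by the Leibniz rule, each term stays bounded, so $f \in A^\infty$. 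Finally, (ii) $\Leftrightarrow$ (iii) is a comparison of $\dist(w,Z)$ for $w \in \mathbb{T}$ with $\dist(w,E)$: since $\sum_n (1-|z_n|) < \infty$, the radial gaps are summable, and one shows the two logarithmic integrals differ by a finite quantity controlled by the Blaschke sum, so one is finite iff the other is.

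I expect the main obstacle to be the constructive direction (ii) $\Rightarrow$ (i): producing the infinitely flat outer function $g$ attached to $E$, and, above all, verifying that the polynomial blow-up of the derivatives of $B$ near $E$ is genuinely dominated by the super-polynomial flatness of $g$ uniformly up to the boundary. The comparison underlying (ii) $\Leftrightarrow$ (iii) is also delicate, because $\dist(w,Z)$ and $\dist(w,E)$ need not be comparable at individual points $w$; the argument must instead control the difference of the two logarithmic integrals in aggregate, using the Blaschke condition to absorb the radial contribution.
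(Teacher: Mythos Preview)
The paper does not prove this theorem. It is stated as a known result attributed to Caughran \cite{CS69}, Taylor--Williams \cite{TW70} and Nelson \cite{Nelson71}, and is used as a black box; there is no in-paper proof to compare your proposal against.

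Your outline is the classical one from those references, and the overall structure is sound. One genuine issue to be aware of, beyond what you already flag: in the Leibniz argument for (ii) $\Rightarrow$ (i), the combination of $|B^{(k)}(z)| \lesssim (1-|z|)^{-k}$ with $|g^{(j)}(z)| \lesssim \dist(z,E)^N$ does \emph{not} by itself give boundedness of $(Bg)^{(n)}$, because as $z$ approaches a point of $E$ nearly tangentially along $\mathbb T$, the quantity $1-|z|$ can be arbitrarily small compared with $\dist(z,E)$, so $\dist(z,E)^N/(1-|z|)^k$ blows up. The fix is a case split: when $1-|z|$ is comparable to $\dist(z,E)$ (a Stolz-type region at $E$) your estimate works; when $1-|z| \ll \dist(z,E)$, the radial projection $z/|z|$ lies at distance $\gtrsim \dist(z,E)$ from $E$, so $z$ sits in a region where only finitely many zeros of $B$ are nearby and $B$ is smooth up to the boundary with locally uniform bounds on its derivatives. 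You correctly identify this step as the main obstacle, and this extra dichotomy is what resolves it. The equivalence (ii) $\Leftrightarrow$ (iii) is Nelson's contribution and is indeed an aggregate estimate rather than a pointwise one, as you note.
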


In particular, either (ii) or (iii) is a sufficient condition for $Z$ to be a zero set for the Dirichlet space. In Theorem \ref{thm:rate_of_convergence} we observe an obstacle to expanding on these conditions: any sufficient condition for zero sets in the Dirichlet space that only depends on the rate of convergence of the sequence to a single boundary point is necessarily implied by Theorem \ref{thm:CTWN}.

The analogue of Theorem \ref{thm:CTWN} for singular inner functions was shown by Taylor and Williams;
see \cite[Corollary 4.8]{TW70}.

\begin{theorem}[Taylor--Williams]
  \label{thm:TW_singular}
  A singular inner function $S_\mu$
  is a factor of $A^\infty$ if and only if the support of $\mu$ is a Carleson set.
\end{theorem}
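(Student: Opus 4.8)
The plan is to deduce the theorem from the classical Beurling--Carleson description of Carleson sets: a compact set $E \subset \T$ of Lebesgue measure zero is a Carleson set if and only if there is a nonzero outer function $\psi \in A^\infty$ that vanishes to infinite order on $E$, meaning that for all $j$ and $m$ one has $|\psi^{(j)}(z)| \le C_{j,m}\,\dist(z,E)^m$ for $z$ near $E$. Write $E = \operatorname{supp}\mu$, which is a compact null set since $\mu$ is singular. Both implications will rest on this equivalence together with one elementary estimate on $S_\mu$: for $k \ge 1$ we have $(\log S_\mu)^{(k)}(z) = c_k \int_{\T} \frac{w}{(w-z)^{k+1}}\, d\mu(w)$, so $|(\log S_\mu)^{(k)}(z)| \le C_k\,\dist(z,E)^{-(k+1)}$; applying Fa\`a di Bruno to $S_\mu = \exp(\log S_\mu)$ and using $|S_\mu| \le 1$ then gives the polynomial bound $|S_\mu^{(n)}(z)| \le C_n\,\dist(z,E)^{-2n}$ for every $n$.

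For sufficiency, suppose $E$ is a Carleson set and let $\psi$ be a flat outer function as above. I would show $S_\mu \psi \in A^\infty$ via Leibniz: $(S_\mu \psi)^{(n)} = \sum_{k=0}^n \binom{n}{k} S_\mu^{(k)} \psi^{(n-k)}$, and taking $m = 2n$ in the flatness estimate, each summand obeys $|S_\mu^{(k)} \psi^{(n-k)}| \le C_k \dist(z,E)^{-2k}\cdot C_{n-k,2k}\dist(z,E)^{2k}$, which is bounded for $z$ near $E$; on the region bounded away from $E$ the function $S_\mu$ extends analytically across $\T \setminus E$, so both factors and all their derivatives are bounded there. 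Hence every derivative of $S_\mu \psi$ is bounded, so $S_\mu \psi \in A^\infty$, and since $\psi$ is outer the inner factor of $S_\mu \psi$ is exactly $S_\mu$; thus $S_\mu$ is a factor of $A^\infty$.

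For necessity, suppose $S_\mu$ is a factor of $A^\infty$, so there is $f \in A^\infty \setminus \{0\}$ with $S_\mu$ as an inner factor; write $f = S_\mu g$ where $g = f/S_\mu \in H^\infty$ and $|g| = |f|$ a.e.\ on $\T$. I would first show $f$ vanishes on all of $E$. Since $\mu$ is singular, its symmetric derivative is $+\infty$ at $\mu$-almost every $\lambda$, so the Poisson integral satisfies $P\mu(r\lambda) \to \infty$ and hence $|S_\mu(r\lambda)| \to 0$ there; as $|f(r\lambda)| \le \|g\|_\infty |S_\mu(r\lambda)|$ and $f$ is continuous on $\overline{\D}$, we get $f(\lambda)=0$ for $\mu$-a.e.\ $\lambda$. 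This zero set is closed and of full $\mu$-measure, hence dense in $E = \operatorname{supp}\mu$, so $f \equiv 0$ on $E$. Now $f \in A^\infty$ is Lipschitz with constant $L = \|f'\|_\infty$, so for $\zeta \in \T$ with nearest point $\zeta^* \in E$ we have $|f(\zeta)| \le L\,\dist(\zeta,E)$, whence $\log \tfrac{1}{\dist(\zeta,E)} \le \log L - \log|f(\zeta)|$. Since $f \in H^\infty \setminus \{0\}$ forces $\log|f| \in L^1(\T)$, integrating over $\T$ yields $\int_{\T} \log \frac{1}{\dist(\zeta,E)}\,|d\zeta| < \infty$, i.e.\ $E$ is a Carleson set.

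The genuinely hard input is the Beurling--Carleson construction of a flat $A^\infty$ function on an arbitrary Carleson set, which drives the sufficiency direction; the accompanying derivative estimates for $S_\mu$ and the Leibniz bookkeeping are routine. By contrast, the necessity direction is essentially elementary once one observes that $f$ must vanish on $E$: the Carleson condition then falls out directly from the Lipschitz bound $|f| \lesssim \dist(\cdot, E)$ and the integrability of $\log|f|$, so that no infinite-order vanishing is needed for that implication.
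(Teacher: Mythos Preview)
The paper does not prove this theorem; it is quoted as a known result of Taylor and Williams with a reference to \cite[Corollary 4.8]{TW70}, so there is no proof in the paper to compare against.

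Your sketch is correct and follows the standard route. For sufficiency, the hard input is exactly what you identify: the existence of an outer $\psi\in A^\infty$ vanishing to infinite order on a given Carleson set (this is the Taylor--Williams construction, which the paper also invokes in the proof of Lemma \ref{lem:Carleson_gamma_zero}). Your derivative estimate $|S_\mu^{(n)}(z)|\le C_n\,\dist(z,E)^{-2n}$ via Fa\`a di Bruno and the Leibniz bookkeeping are the right way to combine the two ingredients. For necessity, the argument that $f$ must vanish on $E=\operatorname{supp}\mu$ (via the radial limit $|S_\mu(r\lambda)|\to 0$ at $\mu$-a.e.\ point, then closure), followed by the Lipschitz bound $|f(\zeta)|\le \|f'\|_\infty\,\dist(\zeta,E)$ and integrability of $\log|f|$, is exactly how one shows the boundary zero set of any nonzero $A^\infty$ function is a Carleson set. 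Both directions are sound as written.
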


Related to Theorem \ref{thm:CTWN}, Bogdan \cite{Bogdan96} proved that a set $S \subset \mathbb{D}$
has the property that every Blaschke sequence in $S$ is a zero set for the Dirichlet space if and only if $\int_{\mathbb{T}} \log \Big( \frac{1}{\dist(z,S)} \Big) \, |dz| < \infty$. Moreover,
Caughran proved that a compact set $E \subset \mathbb{T}$ has the property
that every Blaschke sequence in $\mathbb{D}$ for which $\{\frac{z_n}{|z_n|}\} \subset E$ is a zero set for the Dirichlet space if and only if $E$ is a Carleson set; see \cite[Theorem 2]{Caughran70} and its proof or \cite[Section 4.5]{EKM+14}.
We will establish the following boundary analogue of these results.

\begin{proposition}
  \label{prop:Carleson_singular_intro}
  The following are equivalent for a compact set $E \subset \mathbb{T}$:
  \begin{enumerate}[label=\normalfont{(\roman*)}]
    \item Every singular inner function associated with a measure supported on $E$ is a factor of $\mathcal{D}$;
    \item $E$ is a Carleson set.
  \end{enumerate}
\end{proposition}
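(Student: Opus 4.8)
The plan is to prove the two implications separately. The implication (ii) $\Rightarrow$ (i) will rest on the Taylor--Williams theorem together with the inclusion $A^\infty \subset \mathcal{D}$, while the substantive implication (i) $\Rightarrow$ (ii) will be handled by contraposition, combining Caughran's characterization of Carleson sets via zero sets with the comparison of Dirichlet integrals furnished by Theorem \ref{thm:atomic-zero}.

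For (ii) $\Rightarrow$ (i), suppose $E$ is a Carleson set and let $\mu$ be a positive singular measure with $\operatorname{supp}\mu \subset E$. Since $\dist(z,\operatorname{supp}\mu) \ge \dist(z,E)$ for every $z \in \mathbb{T}$, the integral $\int_{\mathbb{T}} \log(1/\dist(z,\operatorname{supp}\mu)) \, |dz|$ is dominated by the corresponding integral for $E$ and is therefore finite; that is, the closed subset $\operatorname{supp}\mu$ of the Carleson set $E$ is again a Carleson set. By Theorem \ref{thm:TW_singular}, $S_\mu$ is then a factor of $A^\infty$, so there is a nonzero $f \in A^\infty$ with $f \in S_\mu H^2$. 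As $A^\infty \subset \mathcal{D}$, this $f$ lies in $S_\mu H^2 \cap \mathcal{D}$, which is therefore nonzero, and so $S_\mu$ is a factor of $\mathcal{D}$.

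For (i) $\Rightarrow$ (ii), I would argue the contrapositive: assuming $E$ is not a Carleson set, I will produce a singular measure supported on $E$ whose singular inner function is not a factor of $\mathcal{D}$. Since $E$ fails to be a Carleson set, Caughran's theorem \cite{Caughran70} shows that the property ``every Blaschke sequence $\{z_n\}$ with $\{z_n/|z_n|\} \subset E$ is a zero set for $\mathcal{D}$'' must fail; hence there is a Blaschke sequence $\{z_n\}$ with $z_n/|z_n| \in E$ that is not a zero set for $\mathcal{D}$. Equivalently, the associated Blaschke product $B$ is not a factor of $\mathcal{D}$, so by Proposition \ref{prop:inner_factor_char} we have $B g \notin \mathcal{D}$ for every nonzero $g \in \mathcal{D}$. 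I then apply Theorem \ref{thm:atomic-zero} to this sequence: it provides an atomic measure $\mu = \sum_n a_n \delta_{z_n/|z_n|}$, with weights $a_n$ comparable to $1 - |z_n|$ and hence summable since $\{z_n\}$ is Blaschke, together with a bound controlling the Dirichlet integral $\int_{\mathbb{D}} |(Bg)'|^2 \, dA$ in terms of $\int_{\mathbb{D}} |(S_\mu g)'|^2 \, dA$. Because $\{z_n\}$ is infinite (finite sequences are always zero sets) and each atom sits at a point $z_n/|z_n| \in E$ with $E$ closed, $\mu$ is a nonzero finite singular measure with $\operatorname{supp}\mu \subset E$. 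If $S_\mu$ were a factor of $\mathcal{D}$, there would be a nonzero $g \in \mathcal{D}$ with $S_\mu g \in \mathcal{D}$, and the inequality of Theorem \ref{thm:atomic-zero} would then force $B g \in \mathcal{D}$, contradicting that $B$ is not a factor. Thus $S_\mu$ is a singular inner function supported on $E$ that is not a factor of $\mathcal{D}$, so (i) fails.

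I expect the real work to lie not in the logical skeleton but in verifying the applicability of Theorem \ref{thm:atomic-zero} to the sequence supplied by Caughran: one must confirm that the construction yields a genuine (nonzero, finite, singular) measure with atoms landing exactly on $E$, and that the comparison inequality is valid for the extremal $g$ witnessing the factor property, which may require the reduction in Proposition \ref{prop:inner_factor_char} to a sufficiently regular $g$. It is worth stressing that this argument only invokes the inequality of Theorem \ref{thm:atomic-zero} in the direction $\int_{\mathbb{D}} |(Bg)'|^2 \, dA \lesssim \int_{\mathbb{D}} |(S_\mu g)'|^2 \, dA$ --- equivalently, the implication ``$S_\mu$ a factor $\Rightarrow$ $\{z_n\}$ a zero set'' --- and therefore does not depend on the reverse comparison whose availability is left open in Question \ref{ques:SingVsZeros}.
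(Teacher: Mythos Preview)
Your proposal is correct and follows essentially the same route as the paper: both directions match, and for (i) $\Rightarrow$ (ii) both arguments produce a Blaschke sequence with radial projections in $E$ that is not a zero set and then invoke Corollary~\ref{cor:boundary_implies_zero} (the contrapositive of Theorem~\ref{thm:atomic-zero} with singleton partition) to conclude that the associated atomic $S_\mu$ is not a factor. The only difference is cosmetic: you obtain the non-zero-set by citing Caughran's theorem as a black box, whereas the paper unwinds that argument, reducing via \cite[Lemma~4.5.3]{EKM+14} to a sequence $(e^{i\theta_n})$ and then appealing to the explicit uniqueness-set construction of \cite[Theorem~4.1.6]{EKM+14}; your concerns in the final paragraph about the applicability of Theorem~\ref{thm:atomic-zero} are unfounded, since taking each $Z_k=\{z_k\}$ and $\lambda_k=z_k/|z_k|$ gives $a_k=2(1-|z_k|)/(1+|z_k|)$, which is summable exactly when $\{z_k\}$ is Blaschke.
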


We now have two sufficient conditions for singular inner factors, the Shapiro--Shields type condition in Theorem \ref{thm:SS_intro} and the Carleson set condition in Proposition \ref{prop:Carleson_singular_intro}. We will show that there are singular inner factors that are not covered by combining these two conditions; see Theorem \ref{thm:new_singuar_inner_factor} for the precise statement.
As a consequence, there are zero sets for the Dirichlet space that are not the union of a sequence satisfying the Shapiro--Shields condition and a sequence satisfying the Carleson set condition of Theorem \ref{thm:CTWN}.

Finally, in Theorem \ref{thm:SS_wedi}, we will prove an analogue of Theorem \ref{thm:SS_intro} in standard weighted Dirichlet spaces.

The paper is structured as follows. We start with a section with general observations about zero sets and extremal functions in spaces of analytic functions. That sections ends with a version of Theorem \ref{thm:one_point_intro} that is valid for many Hilbert function spaces with complete Pick kernels. This includes the standard weighted Dirichlet spaces that are contained in $H^2$. In Section \ref{sec:prelim} we have listed known facts about the Dirichlet space along with some corollaries that will be used repeatedly. Our main theorems about the Dirichlet space are contained in Sections 4-9. Section 10 contains Theorem \ref{thm:SS_wedi} about atomic singular inner factors in weighted Dirichlet spaces. The appendix contains the construction of a Pick space in which our result about pushing out points fails.

\section{Uniqueness sets of Banach spaces of analytic functions}
We say $\mathcal B$ is a Banach  space of analytic functions on $\mathbb{D}$, if $\mathcal B$ is a Banach space that is injectively embedded in $ \mathrm{Hol}(\D)$, and for every $z\in \D$ the point evaluation at $z$ is a continuous functional on $\mathcal B$.

\begin{definition} \label{def:Fatou Prop}
We say that $\mathcal{B}$ has the \emph{Fatou property} if for every bounded sequence $\{f_n\}$ in $\mathcal{B}$ that converges locally uniformly on $\mathbb{D}$ to a holomorphic function $f$, we have $f\in \mathcal{B}$ and $\|f\|_{\mathcal{B}}\le \liminf_{n\to \infty} \|f_n\|_{\mathcal{B}}$.\end{definition}
Note that by the uniform boundedness principle, every bounded subset of $\mathcal{B}$ is a normal family; hence we could replace the assumption of local uniform convergence by the assumption of pointwise convergence in the definition of the Fatou property.

\begin{example}
  \label{exa:Fatou_basic}
  \begin{enumerate}[label=\normalfont{(\alph*)},wide]
    \item The Dirichlet space $\mathcal{D}$ and the Hardy spaces $H^p$ for $1 \le p \le \infty$ have the Fatou property (by Fatou's lemma). More generally, this argument applies to Besov--Sobolev spaces of holomorphic functions on $\mathbb{D}$.
    \item Every dual Banach space of analytic functions in which point evaluations are weak-$*$ continuous has the Fatou property by Alaoglu's theorem.
    \item The disc algebra $A(\mathbb{D})$ does not have the Fatou property. For instance,
      if $f \in H^\infty \setminus A( \mathbb{D})$ and $\{r_n\}$ is a sequence in $(0,1)$ that converges to $1$, then the sequence $\{f_{r_n}\}$ witnesses the failure of the Fatou property.
  \end{enumerate}
\end{example}

The Fatou property implies the following basic compactness principle.

\begin{lemma}
  \label{lem:compact}
  Let $\mathcal{B}$ be a Banach space of analytic functions on $\mathbb{D}$ that has the Fatou property.
  Let $\{f_n\}$ be a bounded sequence in $\mathcal{B}$.
  Then $\{f_n\}$ has a subsequence that converges locally uniformly to a function in $f \in \mathcal{B}$, and $\|f\| \le \liminf_{n \to \infty} \|f_n\|$.
\end{lemma}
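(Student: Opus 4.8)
The plan is to combine two ingredients: the fact that bounded subsets of $\mathcal{B}$ are normal families (already recorded in the observation following Definition \ref{def:Fatou Prop}) and the Fatou property itself. The only genuine subtlety is bookkeeping with subsequences, so that the final norm estimate is against $\liminf_{n}\|f_n\|$ rather than against the limiting norm along some arbitrary convergent subsequence; for this reason I would extract subsequences in the correct order.

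First I would fix a bound $\|f_n\|\le C$ for all $n$ and set $L=\liminf_{n\to\infty}\|f_n\|$. By the definition of $\liminf$, there is a subsequence $\{f_{m_j}\}$ with $\|f_{m_j}\|\to L$ as $j\to\infty$. Next I would recall why $\{f_n\}$ is a normal family: for each $z\in\mathbb{D}$ let $E_z\colon f\mapsto f(z)$ denote the (continuous) evaluation functional. Given a compact set $K\subset\mathbb{D}$, every $f\in\mathcal{B}$ is holomorphic, hence bounded on $K$, so the family $\{E_z:z\in K\}$ is pointwise bounded on $\mathcal{B}$; the uniform boundedness principle then gives $\sup_{z\in K}\|E_z\|<\infty$, and therefore $\sup_n\sup_{z\in K}|f_n(z)|\le C\sup_{z\in K}\|E_z\|<\infty$. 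Thus $\{f_n\}$, and in particular the subsequence $\{f_{m_j}\}$, is uniformly bounded on compact subsets of $\mathbb{D}$.

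By Montel's theorem, $\{f_{m_j}\}$ admits a further subsequence $\{f_{n_k}\}$ converging locally uniformly on $\mathbb{D}$ to a holomorphic function $f$; being a subsequence of $\{f_{m_j}\}$, it still satisfies $\|f_{n_k}\|\to L$. Finally I would apply the Fatou property to the bounded, locally uniformly convergent sequence $\{f_{n_k}\}$, obtaining $f\in\mathcal{B}$ together with
\[
  \|f\|\le\liminf_{k\to\infty}\|f_{n_k}\|=L=\liminf_{n\to\infty}\|f_n\|,
\]
which is exactly the assertion. I do not expect any step to be a serious obstacle here; the one point to handle carefully is the order of the two extractions, since choosing the $\liminf$-realizing subsequence $\{f_{m_j}\}$ before invoking normality is what pins the Fatou estimate to $\liminf_n\|f_n\|$.
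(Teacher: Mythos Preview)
Your proof is correct and follows the same approach as the paper (normal family via uniform boundedness, Montel, then the Fatou property). Your two-step extraction---first a subsequence realizing $\liminf_n\|f_n\|$, then a locally uniformly convergent further subsequence---is in fact more careful than the paper's own proof, which applies Montel directly and then invokes the Fatou property without explicitly arranging that the resulting bound is against $\liminf_n\|f_n\|$ rather than $\liminf_k\|f_{n_k}\|$.
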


\begin{proof}
  As explained above, $\{f_n\}$ is a normal family, so by Montel's theorem there is a subsequence that converges locally uniformly to a holomorphic function $f$.
  By the Fatou property, we have $f \in \mathcal{B}$ and $\|f\| \le \liminf_{n \to \infty} \|f_n\|$.
\end{proof}

If $\mathcal{B}$ is a Banach space of analytic functions on $\mathbb{D}$ and $\mathcal{M} \subset \mathcal{B}$ is a subspace, we define
\begin{equation*}
  \gamma(\mathcal M)=\gamma_{\mathcal B}(\mathcal M)=\sup\{ \Re f(0): f\in \mathcal M, \|f\|_{\mathcal{B}}\le 1\}.
\end{equation*}
If $\mathcal{M}$ has the Fatou property (e.g. if $\mathcal{M}$ is a closed subspace of the Dirichlet space), then the supremum is attained by Lemma \ref{lem:compact}. Moreover, if the norm of $\mathcal{B}$ is strictly convex (as is the case in any Hilbert space) and if $\gamma(\mathcal M)>0$, then the supremum is attained by a unique function, which we call the extremal function of $\mathcal{M}$.

We will make use of the following continuity property of the function $\gamma$.

\begin{lemma}
  \label{lem:gamma_continuous}
  Let $\mathcal{B}$ be a Banach space of analytic functions on $\mathbb{D}$
  and let $\{\mathcal{M}_n\}$ be a decreasing sequence of closed subspaces of $\mathcal{B}$
  that all have the Fatou property.
  Then
  \begin{equation*}
    \gamma\Big(\bigcap_n \mathcal{M}_n\Big) = \lim_{n \to \infty} \gamma(\mathcal{M}_n).
  \end{equation*}
\end{lemma}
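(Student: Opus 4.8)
The plan is to prove the two inequalities separately, handling the trivial monotone inequality first and then the substantive direction via the compactness principle of Lemma \ref{lem:compact}.

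First I would record the easy structural facts. Since $\{\mathcal{M}_n\}$ is decreasing, the admissible sets in the definition of $\gamma$ shrink with $n$, so $\{\gamma(\mathcal{M}_n)\}$ is non-increasing; and since the zero function lies in every subspace, $\gamma(\mathcal{M}_n) \ge 0$. Hence the limit $L := \lim_{n \to \infty} \gamma(\mathcal{M}_n)$ exists. Because $\bigcap_n \mathcal{M}_n \subseteq \mathcal{M}_m$ for every $m$, taking the supremum over the smaller set gives $\gamma(\bigcap_n \mathcal{M}_n) \le \gamma(\mathcal{M}_m)$, and letting $m \to \infty$ yields the inequality $\gamma(\bigcap_n \mathcal{M}_n) \le L$.

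For the reverse inequality I would exploit that each extremal problem is attained. Since every $\mathcal{M}_n$ has the Fatou property, the remark following Lemma \ref{lem:compact} guarantees a function $f_n \in \mathcal{M}_n$ with $\|f_n\|_{\mathcal{B}} \le 1$ and $\Re f_n(0) = \gamma(\mathcal{M}_n)$. The sequence $\{f_n\}$ is bounded in $\mathcal{B}$, so by Lemma \ref{lem:compact} it has a subsequence $\{f_{n_k}\}$ converging locally uniformly to some $f \in \mathcal{B}$ with $\|f\|_{\mathcal{B}} \le \liminf_k \|f_{n_k}\|_{\mathcal{B}} \le 1$. Evaluating at the origin (locally uniform convergence gives pointwise convergence there), $\Re f(0) = \lim_k \Re f_{n_k}(0) = \lim_k \gamma(\mathcal{M}_{n_k}) = L$.

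The main obstacle, and the only step requiring care, is to verify that the limit $f$ actually lies in the intersection $\bigcap_n \mathcal{M}_n$. Here I would fix an index $m$ and observe that for all $k$ with $n_k \ge m$ we have $f_{n_k} \in \mathcal{M}_{n_k} \subseteq \mathcal{M}_m$ by monotonicity; thus the tail of $\{f_{n_k}\}$ is a bounded sequence lying in $\mathcal{M}_m$ and converging locally uniformly to $f$, so the Fatou property of $\mathcal{M}_m$ forces $f \in \mathcal{M}_m$. Since $m$ is arbitrary, $f \in \bigcap_n \mathcal{M}_n$. As $f$ is then admissible for the extremal problem over the intersection, we conclude $\gamma(\bigcap_n \mathcal{M}_n) \ge \Re f(0) = L$, which together with the first step gives equality.
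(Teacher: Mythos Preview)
Your proof is correct and follows essentially the same approach as the paper's: both establish the trivial inequality by monotonicity, then extract extremal functions $f_n$ via the Fatou property, pass to a locally uniformly convergent subsequence using Lemma \ref{lem:compact}, and verify membership of the limit in $\bigcap_n \mathcal{M}_n$ via the Fatou property of each $\mathcal{M}_m$. Your write-up is somewhat more explicit about why the limit lies in every $\mathcal{M}_m$, but the argument is the same.
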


\begin{proof}
  It is obvious that $\gamma\Big(\bigcap_n \mathcal{M}_n\Big) \le \lim_{n \to \infty} \gamma(\mathcal{M}_n)$, and the limit exists because the sequence is decreasing.
  Conversely, for each $n$, Lemma \ref{lem:compact} implies that there is a function $f_n \in \mathcal{M}_n$ with $\|f_n\|_{\mathcal{B}} \le 1$ and $\Re f_n(0) = \gamma(\mathcal{M}_n)$.
  Applying Lemma \ref{lem:compact} again, we find a subsequence of $\{f_n\}$ that converges locally uniformly to a holomorphic function $f$. By the Fatou property, we have $f \in \bigcap_n \mathcal{M}_n$ and $\|f\|_{\mathcal{B}} \le 1$. Hence
  \begin{equation*}
    \gamma\Big(\bigcap_n \mathcal{M}_n\Big) \ge \Re f(0) = \lim_{n \to \infty} \gamma(\mathcal{M}_n). \qedhere
  \end{equation*}

\end{proof}

Let $S=\{z_n\}$ be a sequence of points in $\D$ (finitely many repetitions allowed, and the sequence could possibly be finite).
We say that a holomorphic function $f$ vanishes on $S$ if $f$ has a zero of multiplicity $\ge k$ at $z$ for each $z$ that is repeated at $k$ times in the sequence $S$.
We say that $f$ vanishes exactly on $S$ if $f$ has a zero of multiplicity $k$ at $z$ for each $z$ that is repeated $k$ times in the sequence $S$, and $f(z)\ne 0$ for every $z\in \D\setminus S$.

We say that $S$ is a zero set for $\mathcal B$, if there is a function $f\in \mathcal B$
that vanishes exactly on $S$.
We say that $S$ is a set of uniqueness for $\mathcal B$, if $f=0$ is the only function $f\in \mathcal B$ that vanishes on $S$.
If $\mathcal B$ has the property that subsets of zero sets are zero sets, then a sequence is either a zero set or a set of uniqueness. Furthermore, under the same hypothesis a sequence
$S$ in $\mathbb{D} \setminus \{0\}$ is a uniqueness set if and only if every function $f\in \mathcal B$ that vanishes on $S$ is zero at the origin.

We can now prove a more general version of Theorem \ref{thm:one_point_intro}.

\begin{theorem}
  \label{thm:zero_subset}
  Let $\mathcal{B}$ be a Banach  space of analytic functions on $\D$ such that
\begin{itemize}
  \item $\mathcal{B}$ has the Fatou property,
  \item  subsets of zero sets are zero sets,
\item finite unions of zero sets are zero sets, and
\item finite sets are zero sets.
\end{itemize}
If $S$ is a sequence in $\mathbb{D}$ without accumulation points in $\D$ that is a set of uniqueness for $\mathcal{B}$, then there exists $w\in \T$ and a subsequence $\{z_n\}$ of $S$ such that $z_n\to w$ and $\{z_n\}$ is a set of uniqueness for $\mathcal B$.
\end{theorem}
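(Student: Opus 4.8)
The plan is to combine two ingredients: a compactness argument that localizes the uniqueness property to a single boundary point, and the continuity of $\gamma$ from Lemma~\ref{lem:gamma_continuous}, which will let me assemble a convergent subsequence while keeping it a set of uniqueness. Throughout, for $T\subset\D$ write $\mathcal M_T=\{f\in\mathcal B: f \text{ vanishes on } T\}$; this is a closed subspace with the Fatou property, since local uniform limits preserve the (pointwise) vanishing conditions. After deleting the finitely many copies of $0$ we may assume $0\notin S$: if $S\setminus\{0\}$ were a zero set, then $S$, being its union with a finite set, would be a zero set too, contradicting that $S$ is a set of uniqueness. I will also use repeatedly that for $T\subset\D\setminus\{0\}$ the set $T$ is a set of uniqueness if and only if $\gamma(\mathcal M_T)=0$: indeed $\gamma(\mathcal M_T)=0$ forces $\Re(\zeta f(0))\le 0$ for every unimodular $\zeta$ and every $f\in\mathcal M_T$ with $\|f\|\le1$, hence $f(0)=0$, so every function vanishing on $T$ vanishes at the origin; this is exactly the characterization of uniqueness sets in $\D\setminus\{0\}$ recorded above.

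First I would localize. Let $E\subset\T$ be the (compact) set of accumulation points of $S$. The claim is that there exists $w\in E$ such that $S\cap B(w,\eps)$ is a set of uniqueness for every $\eps>0$. Suppose not; then each $w\in E$ admits a radius $\eps_w>0$ with $S\cap B(w,\eps_w)$ a zero set, using the dichotomy between zero sets and uniqueness sets. By compactness, finitely many balls $B(w_1,\eps_{w_1}),\dots,B(w_N,\eps_{w_N})$ cover $E$, and the remainder $R=S\setminus\bigcup_i B(w_i,\eps_{w_i})$ has no accumulation point in $\overline{\D}$: none in $\D$ because $S$ has none, and none on $\T$ because $E$ lies in the open set $\bigcup_i B(w_i,\eps_{w_i})$, so any boundary accumulation point of $R$ would have an entire neighbourhood disjoint from $R$. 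Hence $R$ is finite, and $S=R\cup\bigcup_i\big(S\cap B(w_i,\eps_{w_i})\big)$ exhibits $S$ as a finite union of zero sets, contradicting that $S$ is a set of uniqueness. This produces the desired $w$.

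The main point is the extraction. Fix $w$ as above and set $T_k=S\cap B(w,1/k)$ for $k\ge 2$, so $T_k\subset\D\setminus\{0\}$ is a set of uniqueness and thus $\gamma(\mathcal M_{T_k})=0$. Enumerating $T_k$ and applying Lemma~\ref{lem:gamma_continuous} to the decreasing sequence of subspaces $\mathcal M_G$ over finite initial segments $G$ (whose intersection is $\mathcal M_{T_k}$), I obtain $\gamma(\mathcal M_G)\to\gamma(\mathcal M_{T_k})=0$. Hence for each $k$ there is a finite set $G_k\subset T_k\subset B(w,1/k)$ with $\gamma(\mathcal M_{G_k})<1/k$. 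Now put $U=\bigcup_{k\ge2} G_k$. Since $G_k\subset U$ we have $\mathcal M_U\subset\mathcal M_{G_k}$, so $\gamma(\mathcal M_U)\le\gamma(\mathcal M_{G_k})<1/k$ for all $k$, giving $\gamma(\mathcal M_U)=0$; therefore $U$ is a set of uniqueness, and in particular infinite. Finally $U$ converges to $w$: for $\eps>0$, every point of $G_k$ with $1/k<\eps$ lies in $B(w,\eps)$, while only finitely many points of $U$ lie in the finitely many $G_k$ with $1/k\ge\eps$. Thus $w$ is the only accumulation point of $U$, and any enumeration of $U$ is a subsequence of $S$ converging to $w$ and forming a set of uniqueness.

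I expect this extraction step to be the crux. The naive attempts fail: taking all of $S$ near $w$ does not give convergence to $w$ because a fixed ball does not shrink, and thinning a single $T_k$ to a convergent sequence need not preserve the uniqueness property. The decisive idea is that Lemma~\ref{lem:gamma_continuous} captures "almost all" of the uniqueness of $T_k$ already on a finite subset $G_k$, which is automatically trapped within $1/k$ of $w$; stitching these finite pieces together yields a single sequence that both marches to $w$ and, by monotonicity of $\gamma$, inherits $\gamma=0$ and hence remains a set of uniqueness.
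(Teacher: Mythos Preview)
Your proof is correct and follows essentially the same approach as the paper: localize to a single boundary point $w$ by a compactness argument (you cover the accumulation set $E\subset\T$, the paper covers $\T$ itself, but the content is the same), then use Lemma~\ref{lem:gamma_continuous} to extract finite subsets $G_k\subset S\cap B(w,1/k)$ with $\gamma<1/k$ and take their union. The only cosmetic differences are that you spell out the removal of $0$ and the equivalence $\gamma(\mathcal M_T)=0\Leftrightarrow T$ is a uniqueness set, which the paper records just before the theorem.
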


This result applies in particular to the Dirichlet space: the Fatou property was mentioned in Example \ref{exa:Fatou_basic}, the subset property is a consequence of Carleson's formula (see Section \ref{sec:prelim} or e.g. \cite[Theorem 4.1.4]{EKM+14}), the union property follows from the fact that the zero sets of $\mathcal{D}$ and of $\Mult(\mathcal{D})$ agree (see e.g. \cite[Corollary 5.4.2]{EKM+14} or \cite{AHM+17a}),
and finite sets are zero sets since $\mathcal{D}$ contains the polynomials.

\begin{proof}
  If $\mathbb{T}$ is covered by finitely many open discs $D_1,\ldots,D_n$,
  then all but finitely many points of $S$ lie in $\bigcup_{j=1}^n D_j$.
  Hence the hypotheses imply that one of the sets $S\cap D_j$ is a set of uniqueness for $\mathcal{B}$. Thus, by compactness of $\mathbb{T}$, there is $w\in \T$ such that $S\cap B_\varepsilon(w)$ is a set of uniqueness for $\mathcal{B}$ for every $\eps>0$.

  If $F \subset S$, we write
  \begin{equation*}
    \mathcal{M}_F = \{ f \in \mathcal{B}: f \text{ vanishes on } F\}
  \end{equation*}
  and $\gamma(F) = \gamma(\mathcal{M}_F)$. Notice that since $\mathcal B$ has the Fatou property, it follows that each $\mathcal M_F$ will have the Fatou property as well. Thus, if  $\varepsilon,\delta > 0$, then since $S \cap B_\varepsilon(w)$ is a uniqueness set,
  Lemma \ref{lem:gamma_continuous} implies that there exists
  a finite subset $F \subset S$ such that $\gamma(F \cap B_\varepsilon(w)) < \delta$.
  Hence there is a sequence of finite subsets $F_n \subset S \cap B_{\frac{1}{n}}(w)$ such that $\gamma(F_n) < \frac{1}{n}$.
  Let $F = \bigcup_{n=1}^\infty F_n$, where we use the convention
  that points are only repeated if they appear with repetition in the same $F_n$.
  Then $F$ is a subsequence of $S$ that converges to $w$.
  Since $\gamma(F) \le \gamma(F_n) < \frac{1}{n}$ for each $n$, we have $\gamma(F) = 0$,
  hence as noted above $F$ is a set of uniqueness.
\end{proof}

\begin{remark}
  Note that the conclusion of Theorem \ref{thm:zero_subset} is false for the disc algebra
  (which does not satisfy the Fatou property). Indeed, the zero sets of the disc algebra are precisely the Blaschke sequences whose cluster set has linear Lebesgue measure zero; see for instance \cite[Chapter 6]{Hoffman62}.
  So if $Z$ is any Blaschke sequence whose cluster set has positive Lebesgue measure, then $Z$ is a set of uniqueness for the disc algebra, but any subsequence of $Z$ that converges to single point on the circle is a zero set for the disc algebra.
\end{remark}

\begin{remark}
  Nagel, Rudin and Shapiro \cite{NRS82} (see also \cite[Theorem 4.2.2]{EKM+14}) showed that if $\sum_{n} \frac{1}{\log(\frac{1}{1-r_n})} = \infty$,
  then there exists a sequence $\{z_n\}$ with $|z_n| = r_n$ such that $\{z_n\}$ is a set of uniqueness
  for $\mathcal{D}$.
  By Theorem \ref{thm:zero_subset}, we see that there exists a subsequence $(z_{n_k})$ that is a set of uniqueness and that converges to a point on the boundary.
  Moreover, by the theorem of Shapiro and Shields, we must have $\sum_{k} \frac{1}{\log( \frac{1}{1 - |z_{n_k}|})} = \infty$. This gives another argument for the result of \cite{RRS04}. Similarly, for weighted Dirichlet spaces the above theorem can be applied to give a new proof of Theorem 4 of \cite{PauPe11}.
\end{remark}

\begin{question}
  Does the conclusion of Theorem \ref{thm:zero_subset} hold for the Bergman space?
\end{question}

Since unions of zero sets in the Bergman space need not be zero sets (see e.g.\ \cite[Theorem 4.3]{DS04}), Theorem \ref{thm:zero_subset} does not apply directly.

\section{The Dirichlet space: Preliminaries}
\label{sec:prelim}

In this section, we collect a few  results about the Dirichlet space, which we will use throughout.
Our standard reference is the textbook \cite{EKM+14}.
Unless otherwise specified, we will equip $\mathcal{D}$ with the norm
\begin{equation*}
  \|f\|^2 = \int_{\mathbb{D}} |f'|^2 d A + \|f\|_{H^2}^2 = \sum_{n=0}^\infty (n+1) |\widehat{f}(n)|^2
\end{equation*}
where $A$ denotes the normalized area measure,  $H^2$ is the classical Hardy space on the disc, and $f(z) = \sum_{n=0}^\infty \widehat{f}(n) z^n$ is the Taylor expansion of $f$ at the origin.
The multiplier algebra of $\mathcal{D}$ is defined to be
\[
  \Mult(\mathcal{D}) = \{\varphi: \mathbb{D} \to \mathbb{C}: \varphi \cdot f \in \mathcal{D} \text{ for all } f \in \mathcal{D} \}.
\]
Equipped with the multiplier norm
\[
  \|\varphi\|_{\Mult(\mathcal{D})} = \sup \{ \|\varphi f \|: f \in \mathcal{D}, \|f\| \le 1 \},
\]
the multiplier algebra is a unital commutative Banach algebra.
It is elementary that $\Mult(\mathcal{D}) \subset \mathcal{D} \cap H^\infty$,
and it is also known that the inclusion is proper;
see \cite[Section 5.1]{EKM+14}.

The Dirichlet space is a reproducing kernel Hilbert space (RKHS) on $\mathbb{D}$ with reproducing kernel
\begin{equation*}
  K(z,w) = \frac{1}{z \overline{w}} \log\Big( \frac{1}{1 - z \overline{w}} \Big).
\end{equation*}
Occasionally we will reference results that hold for the Dirichlet space, because $K(z,w)$ is a complete Pick kernel; see \cite{AM02} for definitions and further background. One such fact has already been mentioned,
namely that finite unions of zero sets are zero sets.
Another very useful consequence of the complete Pick property is that
the extremal functions of multiplier invariant subspaces are contractive multipliers;  see \cite{MT2000} or \cite[Corollary 4.2]{AHM+18}, or see \cite[Theorem 5.1]{RS94}, where different methods were used and this fact was established only for the Dirichlet space.

\subsection{Carleson's formula}
As mentioned in the introduction, functions in $\mathcal{D}$ have an inner/outer factorization.
Carleson's formula allows us to compute the norm of a function in terms of its inner/outer factorization.
To start, the local Dirichlet integral of a function $f \in H^2$ at $z \in \mathbb{T}$ is defined to be
\begin{equation*}
  D_z(f) = \int_{\mathbb{D}} |f'(w)|^2 \frac{1 - |w|^2}{|w - z|^2} \, d A(w).
\end{equation*}
If $f \in H^2$ and $I$ is an inner function, then
\begin{equation*}
  \|I f\|^2 = \int_{\mathbb{T}} D_z(I) |f(z)|^2 \frac{|dz|}{2 \pi} + \|f\|^2;
\end{equation*}
see for instance Theorems 7.1.3 and 7.6.1 in \cite{EKM+14}.
In particular, if $I f \in \mathcal{D}$, then $f \in \mathcal{D}$.
Moreover,
if $I = B S_\mu$, where $B$ is the Blaschke product with zeros $\{z_n\}$, then
\begin{equation}
  \label{eqn:local_Diri_inner}
  D_z(I) = D_z(B) + D_z(S_\mu) = \sum_n \frac{1 - |z_n|^2}{|z - z_n|^2} + \int_\mathbb{T} \frac{2}{|z - w|^2} d \mu(w);
\end{equation}
see Theorem 7.6.7 in \cite{EKM+14}.

We record two basic observations, part of which were already mentioned in the introduction.
\begin{proposition}
  \label{prop:inner_factor_char}
  The following are equivalent for an inner function $I$:
  \begin{enumerate}[label=\normalfont{(\roman*)}]
    \item $I$ occurs as the inner factor of some function in $\mathcal{D}$;
    \item there exists $g \in H^2 \setminus \{0\}$ such that $I g \in \mathcal{D}$;
    \item there exists $\varphi \in \Mult(\mathcal{D}) \setminus \{0\}$ such that $I \varphi \in \Mult(\mathcal{D})$.
  \end{enumerate}
\end{proposition}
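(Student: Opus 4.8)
The plan is to organize the proof around the single tool recorded after Carleson's formula, namely that if $J$ is inner, $f \in H^2$ and $Jf \in \mathcal{D}$, then $f \in \mathcal{D}$. The two implications (i)$\Rightarrow$(ii) and (iii)$\Rightarrow$(ii) are immediate and I would dispose of them first. Indeed, if $I$ is the inner factor of some $f \in \mathcal{D} \setminus \{0\}$, then $f = I g$ with $g \in H^2 \setminus \{0\}$, which is exactly (ii). And if $\varphi \in \Mult(\mathcal{D}) \setminus \{0\}$ satisfies $I \varphi \in \Mult(\mathcal{D})$, then since $\Mult(\mathcal{D}) \subset \mathcal{D} \cap H^\infty$, the function $g = \varphi \in H^2 \setminus \{0\}$ satisfies $I g = I \varphi \in \mathcal{D}$, giving (ii). It therefore remains to show that (ii) implies both (i) and (iii).

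For (ii)$\Rightarrow$(i), I would start from $g \in H^2 \setminus \{0\}$ with $Ig \in \mathcal{D}$ and strip off any superfluous inner factor of $g$. Writing the inner--outer factorization $g = g_i g_o$, the function $I g_o$ lies in $H^2$ and satisfies $g_i (I g_o) = I g \in \mathcal{D}$. Since $g_i$ is inner, the Carleson consequence gives $I g_o \in \mathcal{D}$. As $g_o$ is outer, $f := I g_o$ is a nonzero element of $\mathcal{D}$ whose inner factor is exactly $I$, so (i) holds in the strong sense.

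The heart of the matter is (ii)$\Rightarrow$(iii), and I would split it into two steps. The first is a multiplier analogue of the division property: \emph{if $J$ is inner, $\psi \in H^2$ and $J\psi \in \Mult(\mathcal{D})$, then $\psi \in \Mult(\mathcal{D})$.} To see this, note that $J\psi \in \Mult(\mathcal{D}) \subset H^\infty$ and $|J| = 1$ a.e.\ on $\mathbb{T}$ force $\psi \in H^\infty$; then for any $h \in \mathcal{D}$ we have $\psi h \in H^2$ and $J(\psi h) = (J\psi) h \in \mathcal{D}$, so the Carleson consequence yields $\psi h \in \mathcal{D}$, whence $\psi \in \Mult(\mathcal{D})$. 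The second step is to produce the multiplier. Consider $\mathcal{M} = I H^2 \cap \mathcal{D}$, the subspace of $\mathcal{D}$-functions divisible by $I$. It is nonzero (it contains $Ig$), multiplier invariant, and closed, the last point because $\mathcal{D}$-convergence implies $H^2$-convergence, under which divisibility by a fixed inner function is preserved. Let $\varphi$ be the extremal function of $\mathcal{M}$; by the complete Pick property, $\varphi$ is a nonzero contractive multiplier. Since $\varphi \in \mathcal{M} \subset I H^2$, we may write $\varphi = I \psi$ with $\psi \in H^2 \setminus \{0\}$, and by the division lemma $\psi \in \Mult(\mathcal{D})$. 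Then $\psi$ witnesses (iii): indeed $\psi \in \Mult(\mathcal{D}) \setminus \{0\}$ and $I \psi = \varphi \in \Mult(\mathcal{D})$.

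The main obstacle I anticipate is not the chain of implications but the correct handling of the extremal function in the last step. If $I(0) = 0$, then every function in $\mathcal{M}$ vanishes at the origin, so the naive extremal problem $\sup\{\Re g(0) : g \in \mathcal{M}, \|g\| \le 1\}$ returns $0$ and is useless. The remedy is to use the extremal function in the sense appropriate to invariant subspaces, normalizing the first non-vanishing Taylor coefficient; the cited results on complete Pick spaces guarantee that this function is still a nonzero contractive multiplier lying in $\mathcal{M}$, which is all the argument needs. Modulo this standard point, the remaining verifications are routine.
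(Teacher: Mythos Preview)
Your proof is correct. The trivial implications and (ii)$\Rightarrow$(i) match the paper's argument exactly. For (ii)$\Rightarrow$(iii), however, you take a genuinely different route.

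The paper proves (ii)$\Rightarrow$(iii) by invoking the Smirnov-type ratio theorem of \cite{AHM+17a}: any $f \in \mathcal{D}$ can be written as $\varphi_1/\varphi_2$ with $\varphi_1,\varphi_2 \in \Mult(\mathcal{D})$ and $\varphi_2$ non-vanishing. Applying this to $f = Ig$ and setting $\varphi = \varphi_2 g$, one has $I\varphi = \varphi_1 \in \Mult(\mathcal{D})$, and then the same Carleson division step you isolate shows $\varphi \in \Mult(\mathcal{D})$. You instead use the extremal function of the closed invariant subspace $IH^2 \cap \mathcal{D}$, which is a contractive multiplier by the complete Pick property, and then apply your division lemma to its $H^2$ quotient by $I$. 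Both approaches ultimately rest on the complete Pick structure of $\mathcal{D}$, but through different manifestations of it: the paper uses the ratio representation, you use the multiplier property of extremal functions. Your version has the advantage of quoting only facts already recorded in the paper's preliminaries section, so it is more self-contained within the paper; the paper's version avoids the small technical wrinkle about $I(0)=0$ that you correctly flag and handle. Your division lemma, incidentally, is essentially the same computation the paper performs in the last two sentences of its (ii)$\Rightarrow$(iii), just stated abstractly.
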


\begin{proof}
  (i) $\Rightarrow$ (ii) is trivial.

  (ii) $\Rightarrow$ (i) Let $g \in H^2 \setminus \{0\}$ such that $I g \in \mathcal{D}$ and let $g = g_i g_o$ be the inner/outer factorization of $g$. Then $g_i I g_0 \in \mathcal{D}$, so $I g_o \in \mathcal{D}$ by Carleson's formula, and $I$ is the inner factor of $I g_o$.

  (ii) $\Rightarrow$ (iii) Let $g \in H^2 \setminus \{0\}$ such that $I g \in \mathcal{D}$.
  By \cite[Theorem 1.1]{AHM+17a}, there exist $\varphi_1,\varphi_2 \in \Mult(\mathcal{D})$ with $\varphi_2$ non-vanishing such that
  $I g = \frac{\varphi_1}{\varphi_2}$.
  Define $\varphi = \varphi_2 g$, which is not zero. Then $I \varphi = \varphi_1 \in \Mult(\mathcal{D})$.
  Hence for all $h \in \mathcal{D}$, we have $I \varphi h = \varphi_1 h \in \mathcal{D}$, and so $\varphi h \in \mathcal{D}$
  by Carleson's formula. This implies that $I\varphi \in \Mult(\mathcal{D})$ and $\varphi \in \Mult(\mathcal{D})$.

  (iii) $\Rightarrow$ (ii) is trivial.
\end{proof}

\begin{corollary}
  \label{cor:inner_factor_product}
  Let $I_1,I_2$ be two inner functions. Then $I_1 I_2$ is an inner factor of $\mathcal{D}$ if and only if both $I_1$ and $I_2$ are inner factors of $\mathcal{D}$.
\end{corollary}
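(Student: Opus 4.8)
The plan is to treat the two implications separately, relying throughout on the characterization of inner factors in Proposition \ref{prop:inner_factor_char}, and in particular on the multiplier characterization~(iii), which is precisely what makes products behave well.

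The implication that $I_1 I_2$ being an inner factor forces each of $I_1$ and $I_2$ to be an inner factor is essentially immediate and I would handle it via condition~(ii). Suppose $g \in H^2 \setminus \{0\}$ satisfies $I_1 I_2 g \in \mathcal{D}$. Regrouping as $I_1(I_2 g)$ and noting that $I_2 g \in H^2 \setminus \{0\}$ (since $I_2$ is inner, $|I_2 g| = |g|$ a.e.\ on $\mathbb{T}$), condition~(ii) applied to $I_1$ shows that $I_1$ is an inner factor. The symmetric grouping $I_2(I_1 g)$ handles $I_2$.

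The substantive direction is the converse. The naive attempt is to start from condition~(ii), take witnesses $g_1, g_2 \in H^2 \setminus \{0\}$ with $I_j g_j \in \mathcal{D}$, and try to multiply them; but this fails because $\mathcal{D}$ is not closed under multiplication, so $g_1 g_2$ need not lie in $\mathcal{D}$. This is the main obstacle, and it is exactly what condition~(iii) is built to circumvent, since $\Mult(\mathcal{D})$ \emph{is} an algebra. So instead I would fix $\varphi_1, \varphi_2 \in \Mult(\mathcal{D}) \setminus \{0\}$ with $I_1 \varphi_1, I_2 \varphi_2 \in \Mult(\mathcal{D})$ and set $\varphi = \varphi_1 \varphi_2$. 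Then
\[
  (I_1 I_2)\varphi = (I_1 \varphi_1)(I_2 \varphi_2) \in \Mult(\mathcal{D}),
\]
as a product of two multipliers, while $\varphi \in \Mult(\mathcal{D}) \setminus \{0\}$. Hence condition~(iii) holds for the inner function $I_1 I_2$, and Proposition \ref{prop:inner_factor_char} yields that $I_1 I_2$ is an inner factor of $\mathcal{D}$.

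I expect the only point needing a moment's care is that $\varphi = \varphi_1 \varphi_2$ is a nonzero element of $\Mult(\mathcal{D})$. This is routine: $\Mult(\mathcal{D})$ is a Banach algebra, so it is closed under products, and since multipliers are analytic on the connected domain $\mathbb{D}$, the product of two nonzero such functions is nonzero by the identity theorem. Everything else in the argument reduces to regrouping the factors and reading off the appropriate equivalent condition from Proposition \ref{prop:inner_factor_char}.
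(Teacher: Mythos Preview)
Your argument is correct. The ``if'' direction matches the paper's proof exactly: both invoke condition~(iii) of Proposition~\ref{prop:inner_factor_char} and the fact that $\Mult(\mathcal{D})$ is an algebra without zero divisors.

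For the ``only if'' direction, however, you take a different and more elementary route than the paper. The paper uses Carleson's formula together with the additivity $D_z(I_1 I_2) = D_z(I_1) + D_z(I_2)$ from \eqref{eqn:local_Diri_inner}: if $I_1 I_2 g \in \mathcal{D}$, then $\int_{\mathbb{T}} D_z(I_j)\,|g|^2\,|dz| < \infty$ for each $j$, so the \emph{same} witness $g$ serves for both $I_1$ and $I_2$. Your regrouping argument via condition~(ii) avoids Carleson's formula entirely and works in any space where the characterization (ii) holds, at the mild cost of producing different witnesses ($I_2 g$ for $I_1$ and $I_1 g$ for $I_2$). Both approaches are valid; yours is more portable, while the paper's yields slightly more information.
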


\begin{proof}
  The ``only if'' statement follows from Carleson's formula since $D_z(I_1I_2)=D_z(I_1)+D_z(I_2)$ by (\ref{eqn:local_Diri_inner}), while ``if'' follows from (iii)
  and the observation that $\Mult(\mathcal{D})$ is an algebra without non-trivial zero divisors.
\end{proof}

We will make use of the following standard application of infinite products.
\begin{lemma}
  \label{lem:infinite_products}
  Let $I$ be an inner function that can be written as an infinite product $I = \prod_{n} I_n$, where each $I_n$
  is an inner function with $I_n(0) > 0$.
  Assume that each $I_n$ is a factor of $\mathcal{D}$ and that
  \begin{equation*}
    \sum_{n} (1  - \gamma(I_n H^2 \cap \mathcal{D})) < \infty.
  \end{equation*}
  Then $I$ is an inner factor of $\mathcal{D}$.
\end{lemma}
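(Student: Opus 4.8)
The plan is to use the extremal functions of the subspaces $\mathcal M_n = I_n H^2 \cap \mathcal D$ as building blocks, exploiting that in a complete Pick space these extremals are contractive multipliers, and then to pass to a limit of their finite products. First I would record the relevant properties of $\mathcal M_n$. Each $\mathcal M_n$ is a closed subspace of $\mathcal D$: it is the intersection of $\mathcal D$ with the $H^2$-closed subspace $I_n H^2$, and convergence in $\mathcal D$ implies convergence in $H^2$. It is also invariant under $\Mult(\mathcal D)$, since multipliers are bounded analytic functions. Because $I_n$ is a factor of $\mathcal D$ we have $\mathcal M_n \ne (0)$, and the normalization $I_n(0) > 0$ guarantees (after dividing off any common power of $z$) that $\mathcal M_n$ contains a function not vanishing at the origin, so that $\gamma(\mathcal M_n) > 0$. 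Let $\varphi_n$ be the extremal function of $\mathcal M_n$; by the complete Pick property it is a contractive multiplier, and after rotation $\varphi_n(0) = \gamma(\mathcal M_n) \in (0,1]$. Since $\varphi_n \in \mathcal M_n \subset I_n H^2$, the inner factor of $\varphi_n$ is divisible by $I_n$.

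Next I would form the partial products $\Phi_N = \prod_{n=1}^N \varphi_n$. A product of contractive multipliers is again a contractive multiplier, so $\|\Phi_N\|_{\mathcal D} = \|\Phi_N \cdot 1\| \le \|1\| = 1$. Moreover $\Phi_N(0) = \prod_{n=1}^N \gamma(\mathcal M_n)$, and the hypothesis $\sum_n (1 - \gamma(\mathcal M_n)) < \infty$ together with $\gamma(\mathcal M_n) \in (0,1]$ ensures that this product converges to a limit $L > 0$. By Lemma \ref{lem:compact} (the Fatou property of $\mathcal D$), some subsequence $\Phi_{N_k}$ converges locally uniformly to a function $\Phi \in \mathcal D$ with $\|\Phi\| \le 1$; evaluating at the origin gives $\Phi(0) = L \ne 0$, so $\Phi \ne 0$.

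It then remains to check that $\Phi$ is divisible by $I = \prod_n I_n$. For fixed $m$ and every $N \ge m$, the function $\Phi_N$ is divisible by $J_m := \prod_{n=1}^m I_n$, hence $\Phi_N \in J_m H^2$. Local uniform convergence of the $H^2$-bounded sequence $\Phi_{N_k}$ implies weak convergence in $H^2$ (point evaluations being bounded functionals), and $J_m H^2$ is weakly closed, so $\Phi \in J_m H^2$ for every $m$. Since the inner functions $J_m$ increase to $I$, one has $\bigcap_m J_m H^2 = I H^2$, whence $\Phi = I g$ for some $g \in H^2$; as $\Phi \ne 0$ we have $g \ne 0$, and $\Phi = I g \in \mathcal D$. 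By Proposition \ref{prop:inner_factor_char}, $I$ is an inner factor of $\mathcal D$.

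The main obstacle is this final limiting step: one must ensure that divisibility by each finite product $J_m$ survives the passage to the locally uniform limit and upgrades to divisibility by the full infinite product $I$, while simultaneously the limit $\Phi$ remains nonzero. The nonvanishing is exactly where the summability hypothesis enters, via convergence of $\prod_n \gamma(\mathcal M_n)$ to a positive number; the divisibility is handled by the weak closedness of $J_m H^2$ in $H^2$ and the identity $\bigcap_m J_m H^2 = I H^2$. Everything else is bookkeeping built on the contractive-multiplier property of extremal functions and the compactness furnished by the Fatou property.
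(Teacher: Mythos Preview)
Your proof is correct and follows the same overall strategy as the paper: take the extremal functions $\varphi_n$ of $I_n H^2 \cap \mathcal D$, which are contractive multipliers, form their partial products, and pass to a limit that lies in $\mathcal D$ and is nonzero at the origin thanks to the summability hypothesis. The only real difference is in how divisibility by $I$ is verified. The paper argues directly that the full infinite product $\prod_n \varphi_n$ converges locally uniformly, via the Schwarz--Pick estimate $|1-\varphi_n(z)| \le \frac{1+r}{1-r}(1-\varphi_n(0))$ on $|z|\le r$; writing $\varphi_n = I_n \psi_n$ with $\psi_n(0) \ge \varphi_n(0)$ and $I_n(0) \ge \varphi_n(0)$, the same estimate makes $\prod_n I_n$ and $\prod_n \psi_n$ converge separately, so $\varphi = I\psi$ drops out immediately. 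You instead extract only a subsequential limit via the Fatou property and recover divisibility a posteriori from the weak closedness of each $J_m H^2$ in $H^2$ together with $\bigcap_m J_m H^2 = I H^2$. The paper's route yields a little more (genuine convergence of the product, in fact in the weak-$*$ topology of $\Mult(\mathcal D)$), while yours is softer and sidesteps the Schwarz--Pick computation; both are perfectly adequate for the lemma as stated.
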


\begin{proof}
  Let $\varphi_n$ be the extremal function for $I_n H^2 \cap \mathcal{D}$. Then each $\varphi_n$ is a contractive multiplier; see the discussion at the beginning of the section. We claim that the infinite product
  \begin{equation*}
    \varphi = \prod_{n=1}^\infty \varphi_n
  \end{equation*}
  converges locally uniformly to a nonzero function $\varphi \in \Mult(\mathcal{D})$,
  and that $\varphi = I \psi$ for some $\psi \in H^2$.

  To see this, we use the following basic result from complex analysis: If $\{f_n\}$ is a sequence of holomorphic functions on $\mathbb{D}$ bounded by $1$ such that $\sum_n |1 - f_n(0)| < \infty$, then the infinite product $\prod_{n=1}^\infty f_n$ converges absolutely and locally uniformly  to a bounded holomorphic function.
  Indeed, a routine estimate based on the Schwarz--Pick lemma shows that if $|z| \le r$, then
  \begin{equation*}
    |1 - f_n(z)| \le \frac{1 + r}{1 -r } |1 - f_n(0)|,
  \end{equation*}
  and hence convergence follows from the standard theory of infinite products; see e.g. \cite[VII.5]{Conway78}.

  Since $\varphi_n(0) = \gamma(I_n H^2 \cap \mathcal{D})$, the assumption implies that the product defining $\varphi$ converges uniformly on compact subsets of $\mathbb{D}$. Since the partial products are bounded in multiplier norm,
  the Fatou property of $\mathcal{D}$ implies that $\varphi \in \Mult(\mathcal{D})$.
  (Indeed, the infinite product converges in the weak-$*$ topology of $\Mult(\mathcal{D})$,
  obtained by viewing $\Mult(\mathcal{D})$ as a weak-$*$ closed subalgebra of the algebra of all bounded linear operators on $\mathcal{D}$.)
  Writing $\varphi_n = I_n \psi_n$, we also have $\|\psi_n\|_\infty \le 1$ and $\psi_n(0) \ge \varphi_n(0)$, so the infinite product
  \begin{equation*}
    \psi = \prod_{n=1}^\infty \psi_n
  \end{equation*}
  converges absolutely to a bounded holomorphic function.
  Since $I_n(0) \ge \varphi_n(0)$, the product $I = \prod_{n=1}^\infty I_n$ also converges absolutely,
  and hence $\varphi = I \psi$.
\end{proof}

\subsection{Atomic singular inner factors and zero sequences}
We next record another consequence of Carleson's formula that we will use repeatedly.
We start with a lemma, which is probably well known. We include the short proof for completeness.

\begin{lemma}
  \label{lem:Poisson_sup}
  Let $z\in \D$ and $\lambda \in \mathbb{T}$.
  Then
  \begin{equation*}
     \sup_{|\zeta|= 1} \frac{|\zeta - \lambda|}{|\zeta - z|} = 2 \frac{|\lambda - z|}{1 - |z|^2}.
  \end{equation*}
\end{lemma}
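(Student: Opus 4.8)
The plan is to reduce everything to the trivial case $z=0$ by precomposing with the disc automorphism that sends $z$ to the origin. Concretely, for $z \in \D$ set
\[
  \psi(\zeta) = \frac{\zeta - z}{1 - \overline{z}\,\zeta},
\]
which is an automorphism of $\D$ mapping $\T$ onto itself and sending $z$ to $0$. First I would record the boundary identity $|1 - \overline{z}\,\zeta| = |\zeta - z|$ for $\zeta \in \T$ (multiply inside the modulus by $\overline{\zeta}$, which has modulus one, and conjugate); this shows $|\psi(\zeta)| = 1$ on $\T$, so that $\psi$ restricts to a bijection of $\T$.

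The key computation is the standard automorphism difference formula: for all $a,b$,
\[
  \psi(a) - \psi(b) = \frac{(a-b)(1 - |z|^2)}{(1 - \overline{z}\,a)(1 - \overline{z}\,b)}.
\]
Applying this with $a = \zeta \in \T$ and $b = \lambda \in \T$, writing $w = \psi(\zeta)$ and $\eta = \psi(\lambda)$, and then using the boundary identity to replace $|1 - \overline{z}\,\zeta|$ by $|\zeta - z|$ and $|1 - \overline{z}\,\lambda|$ by $|\lambda - z|$, I expect the ratio to collapse to
\[
  \frac{|\zeta - \lambda|}{|\zeta - z|} = |w - \eta| \cdot \frac{|\lambda - z|}{1 - |z|^2}.
\]
The factor $|\lambda - z|/(1-|z|^2)$ is independent of $\zeta$, so the entire $\zeta$-dependence is carried by the single term $|w - \eta|$.

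Finally, since $\psi$ is a bijection of $\T$, as $\zeta$ runs over $\T$ the point $w = \psi(\zeta)$ runs over all of $\T$ while $\eta \in \T$ stays fixed. Hence
\[
  \sup_{|\zeta|=1} \frac{|\zeta - \lambda|}{|\zeta - z|} = \frac{|\lambda - z|}{1-|z|^2}\,\sup_{w \in \T} |w - \eta| = 2\,\frac{|\lambda - z|}{1-|z|^2},
\]
because $|w-\eta| \le |w| + |\eta| = 2$ on $\T$, with equality at $w = -\eta$; this also shows the supremum is attained. I do not anticipate a genuine obstacle: the only points requiring care are the algebraic verification of the difference formula and the two boundary modulus identities, all of which are routine one-line manipulations. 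A direct approach, parametrizing $\zeta = e^{i\theta}$ and maximizing the squared ratio, also works but is messier, since the extremal angle depends on $\arg z$; the automorphism method sidesteps this entirely.
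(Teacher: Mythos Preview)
Your proof is correct and complete. The automorphism difference identity and the boundary modulus identities are exactly as you state, and the reduction to $\sup_{w\in\T}|w-\eta|=2$ is clean.

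The paper takes a different route: it first rotates to make $\lambda=1$, then applies the Cayley transform $w=i\frac{1+u}{1-u}$ to pass to the upper half plane, where the ratio becomes $|\alpha+i|/|\alpha-w|$ with $\alpha=i\frac{1+z}{1-z}$; the supremum over $w\in\R$ is then read off as $|\alpha+i|/|\operatorname{Im}\alpha|$, and translated back. Your approach instead uses the disc automorphism sending $z$ to $0$, which stays entirely inside $\D$ and reduces the problem to the trivial observation that the diameter of $\T$ is $2$. Both arguments are short, but yours is arguably more transparent: the factor $|\lambda-z|/(1-|z|^2)$ emerges directly from the automorphism difference formula rather than from back-substitution after a half-plane computation, and the extremal $\zeta$ (namely $\psi^{-1}(-\eta)$) is immediately visible. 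The paper's half-plane approach, on the other hand, makes the connection to Poisson-kernel type quantities on $\R$ somewhat more explicit.
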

\begin{proof}   By replacing $z$ with $\overline{\lambda} z$, we may assume that $\lambda = 1$.
  We map the unit disc onto the upper half plane via the map $w=i\frac{1+u}{1-u}$. Then $u= \frac{w-i}{w+i}$. Set $\alpha= i \frac{1+z}{1-z}$, then one computes
$$ \Big|\frac{u-1}{u-z} \Big|=  \Big|\frac{\alpha+i}{\alpha-{w}} \Big|.$$
The supremum over $w\in \R$ is attained for $w= \mathrm{Re}\ \alpha$ and it equals $\frac{|\alpha+i|}{|\mathrm{Im} \ \alpha|}$. Now we express this quantity in terms of $z$ and obtain the lemma.
\end{proof}

\begin{theorem}
  \label{thm:atomic-zero}
  Let $Z$ be a sequence in $\mathbb{D}$. Suppose that $Z$ can be partitioned as $Z = \bigcup_{k} Z_k$
  and that there exist $\lambda_k \in \mathbb{T}$ such that
  setting
  \begin{equation*}
    a_k = 2 \sum_{z \in Z_k} \frac{|\lambda_k - z|^2}{1 - |z|^2} \ \text{ and }  \mu = \sum_{k} a_k \delta_{\lambda_k},
  \end{equation*}
  we have that $\sum_k a_k<\infty$ and $S_\mu$ is an inner factor of $\mathcal D$.

  Then $Z$ is a zero set for the Dirichlet space.
\end{theorem}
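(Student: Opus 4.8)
The plan is to realize the Blaschke product $B$ with zero set $Z$ as an inner factor of $\mathcal{D}$ by dominating its local Dirichlet integral pointwise on the circle by that of $S_\mu$, and then transporting the finiteness of the Dirichlet integral across this comparison via Carleson's formula. As a preliminary observation, I would note that the summability hypothesis already forces $Z$ to be a Blaschke sequence: since $\lambda_k \in \mathbb{T}$ gives $|\lambda_k - z| \ge 1 - |z|$, each summand obeys $\frac{|\lambda_k - z|^2}{1 - |z|^2} \ge \frac{1 - |z|}{2}$, so $\sum_k a_k \ge \sum_{z \in Z} (1 - |z|)$. Hence $B$ is a well-defined Blaschke product whose zeros are exactly $Z$.

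The heart of the argument is the pointwise estimate
\begin{equation*}
  D_z(B) \le D_z(S_\mu) \qquad \text{for all } z \in \mathbb{T}.
\end{equation*}
By \eqref{eqn:local_Diri_inner} we have $D_z(B) = \sum_{z' \in Z} \frac{1 - |z'|^2}{|z - z'|^2}$, while $\mu = \sum_k a_k \delta_{\lambda_k}$ gives $D_z(S_\mu) = \sum_k \frac{2 a_k}{|z - \lambda_k|^2}$. Inserting the definition of $a_k$ and grouping the zeros according to the partition $Z = \bigcup_k Z_k$, it suffices to prove the term-by-term inequality
\begin{equation*}
  \frac{1 - |z'|^2}{|z - z'|^2} \le \frac{4 |\lambda_k - z'|^2}{|z - \lambda_k|^2 (1 - |z'|^2)}, \qquad z' \in Z_k,
\end{equation*}
for every $z \in \mathbb{T}$. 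After clearing denominators this is equivalent to $\frac{|z - \lambda_k|}{|z - z'|} \le \frac{2 |\lambda_k - z'|}{1 - |z'|^2}$, which is precisely Lemma \ref{lem:Poisson_sup} applied with disc point $z'$ and boundary point $\lambda_k$. Summing over $z' \in Z$ yields the asserted domination $D_z(B) \le D_z(S_\mu)$.

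With the comparison in hand, I would conclude as follows. Since $S_\mu$ is an inner factor of $\mathcal{D}$, Proposition \ref{prop:inner_factor_char} furnishes a nonzero $g \in H^2$ with $S_\mu g \in \mathcal{D}$, so Carleson's formula gives $\int_{\mathbb{T}} D_z(S_\mu) |g(z)|^2 \frac{|dz|}{2\pi} < \infty$. Applying Carleson's formula to $B$ and using the pointwise bound,
\begin{equation*}
  \|B g\|^2 = \int_{\mathbb{T}} D_z(B) |g(z)|^2 \frac{|dz|}{2\pi} + \|g\|^2 \le \|S_\mu g\|^2 < \infty,
\end{equation*}
so $B g \in \mathcal{D}$ with $g \ne 0$. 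A second application of Proposition \ref{prop:inner_factor_char} shows that $B$ is the inner factor of some $f \in \mathcal{D}$; writing $f = B f_o$ with $f_o$ outer, this $f$ vanishes exactly on the zeros of $B$, so $Z$ is a zero set for $\mathcal{D}$.

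The only step carrying genuine content is the pointwise comparison $D_z(B) \le D_z(S_\mu)$; everything else is routine bookkeeping with Carleson's formula and Proposition \ref{prop:inner_factor_char}. I expect the delicate point to be simply verifying that the specific constant in the definition of $a_k$ is calibrated so that the term-by-term bound is exactly the squared form of Lemma \ref{lem:Poisson_sup}, with the factor $2$ in $a_k$ matching the factor $2$ produced by the Poisson supremum and leaving no slack.
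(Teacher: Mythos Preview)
Your proof is correct and follows essentially the same route as the paper: both reduce to the pointwise comparison $D_\zeta(B)\le D_\zeta(S_\mu)$ via the term-by-term estimate coming from Lemma~\ref{lem:Poisson_sup}, and then transfer membership in $\mathcal D$ through Carleson's formula. Your write-up is slightly more explicit in the final step (invoking Proposition~\ref{prop:inner_factor_char} a second time to produce an outer companion for $B$), but the mathematical content is identical.
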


\begin{proof}
  Since  $\{a_k\}$ is summable, the inequality $1 - |z| \le 2 \frac{|\lambda_k - z|^2}{1 - |z|^2}$ shows in particular that
  $Z$ is a Blaschke sequence. Let $B$ be the Blaschke product with zeros $Z$. Since  $S_\mu$ is a factor of $\mathcal{D}$, Proposition \ref{prop:inner_factor_char} yields a non-zero function $f \in \mathcal D$  with $S_\mu f \in \mathcal{D}$.
  We will show that $B f \in \mathcal{D}$.

  By Carleson's formula  it suffices to show that
  \begin{equation*}
    \sum_{z \in Z} \frac{1 - |z|^2}{|\zeta - z|^2} \le \sum_{k} \frac{2 a_k}{|\zeta - \lambda_k|^2}
    \quad \text{ for all } \zeta \in \mathbb{T}.
  \end{equation*}
  To see this, we fix $k$ and use Lemma \ref{lem:Poisson_sup} to estimate
  \begin{align*}
    \sum_{z \in Z_k} \frac{1 - |z|^2}{|\zeta - z|^2}
    &= \sum_{z \in Z_k} (1 - |z|^2) \frac{|\zeta - \lambda_k|^2}{|\zeta - z|^2} \frac{1}{|\zeta - \lambda_k|^2} \\
    &\le \sum_{z \in Z_k} \frac{|\lambda_k - z|^2}{1 - |z|^2} \frac{4}{|\zeta - \lambda_k|^2}
    = \frac{2 a_k}{|\zeta - \lambda_k|^2}.
  \end{align*}
  Summing in $k$ yields the desired inequality.
\end{proof}
For clarity we separate  a case of the previous theorem where each of the sets $Z_k=\{r_k\lambda_k\}$ consists of a single point.
The statement below follows from the previous theorem and Carleson's formula.
\begin{corollary} \label{cor:boundary_implies_zero} For $k\in \N$ let $r_k\in (0,1)$ and $\lambda_k \in \T$ such that $\{r_k \lambda_k\}\subseteq \D$ is a Blaschke sequence that is not a zero sequence for $\mathcal D$, and let $\mu= \sum_k (1-r_k)\delta_{\lambda_k}$. Then $S_\mu$ is not a factor of the Dirichlet space.
\end{corollary}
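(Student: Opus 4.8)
The plan is to prove the contrapositive: assuming $S_\mu$ is a factor of $\mathcal{D}$, I will show that $Z := \{r_k \lambda_k\}$ is a zero set for $\mathcal{D}$, contradicting the hypothesis. The natural tool is Theorem~\ref{thm:atomic-zero}, applied to the partition $Z = \bigcup_k Z_k$ into singletons $Z_k = \{r_k \lambda_k\}$. A direct computation gives the weights that theorem produces:
\begin{equation*}
  a_k = 2 \frac{|\lambda_k - r_k \lambda_k|^2}{1 - |r_k \lambda_k|^2} = \frac{2(1-r_k)}{1 + r_k},
\end{equation*}
so the relevant measure there is $\tilde\mu = \sum_k a_k \delta_{\lambda_k}$, which differs from the measure $\mu = \sum_k (1 - r_k) \delta_{\lambda_k}$ in the statement. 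Reconciling these two measures is the crux of the argument.

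First I would record the elementary two-sided bound $1 - r_k \le a_k \le 2(1 - r_k)$, valid because $r_k \in (0,1)$; equivalently $\mu \le \tilde\mu \le 2\mu$ as positive measures. In particular $\sum_k a_k \le 2 \sum_k (1 - r_k) < \infty$ since $Z$ is assumed Blaschke, so the summability hypothesis of Theorem~\ref{thm:atomic-zero} holds automatically.

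The key step, and the main obstacle, is to pass from $S_\mu$ being a factor to $S_{\tilde\mu}$ being a factor. Since the atoms of $\tilde\mu$ are heavier than those of $\mu$, being a factor is \emph{a priori} harder for $S_{\tilde\mu}$, so one cannot simply divide; the idea is instead to first inflate $\mu$ and then dominate. Because $S_\mu$ is a factor, Corollary~\ref{cor:inner_factor_product} applied to $I_1 = I_2 = S_\mu$ shows that $S_\mu^2 = S_{2\mu}$ is also a factor. The domination $\tilde\mu \le 2\mu$ then lets me factor $S_{2\mu} = S_{\tilde\mu} \cdot S_{2\mu - \tilde\mu}$, a genuine product of singular inner functions since $2\mu - \tilde\mu \ge 0$. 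Invoking the ``only if'' direction of Corollary~\ref{cor:inner_factor_product} once more yields that $S_{\tilde\mu}$ is a factor of $\mathcal{D}$.

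Finally, with $S_{\tilde\mu}$ a factor and $\sum_k a_k < \infty$, Theorem~\ref{thm:atomic-zero} applies verbatim to the partition $Z = \bigcup_k \{r_k \lambda_k\}$ and concludes that $Z$ is a zero set for $\mathcal{D}$, contradicting the assumption that it is not. Hence $S_\mu$ cannot be a factor. I expect the only real content beyond bookkeeping to be the inflation-and-domination maneuver, whose sole purpose is to absorb the factor $\tfrac{2}{1+r_k} \in [1,2)$ by which Carleson's formula forces the weights $a_k$ to exceed the masses $1 - r_k$.
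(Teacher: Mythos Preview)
Your argument is correct and follows essentially the paper's route: reduce to Theorem~\ref{thm:atomic-zero} with the singleton partition, and handle the mismatch between the weights $1-r_k$ and $a_k=\tfrac{2(1-r_k)}{1+r_k}$ via the bound $\tilde\mu\le 2\mu$. The only cosmetic difference is that the paper appeals to Carleson's formula directly (since $D_\zeta(S_{\tilde\mu})\le 2\,D_\zeta(S_\mu)$ forces $S_{\tilde\mu}f\in\mathcal D$ whenever $S_\mu f\in\mathcal D$), whereas you package the same step through Corollary~\ref{cor:inner_factor_product} via the squaring $S_\mu\mapsto S_{2\mu}$ and then dividing off $S_{2\mu-\tilde\mu}$.
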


\begin{question}\label{ques:SingVsZeros}
  Is the converse of Corollary \ref{cor:boundary_implies_zero} true?
\end{question}

Note that there is no pointwise bound of the form $D_z(S_\mu) \lesssim D_z(B)$ for all $z \in \mathbb{T}$,
and it is generally not true that if $g$ is outer such that $B g \in \mathcal{D}$, then $S_\mu g \in \mathcal{D}$,
as the example of a single Blaschke factor shows. In fact, the failure persists even if we take $B g$
to be the extremal function of $B H^2\cap \mathcal{D}$, since the extremal function in the case of a single
Blaschke factor with zero $r \lambda$ does not vanish at $\lambda \in \mathbb{T}$; see Formula
(2.2) in \cite{GRS02}.

Nonetheless, many of our results about inner factors can be considered to be evidence in favor of a positive answer to Question \ref{ques:SingVsZeros}. Moreover, if $\mathcal{D}$ is replaced by $A^\infty$, then the analogous question has a positive answer, since the statements about zero sets and inner factors are both equivalent to $\overline{ \{\lambda_k  : k \in \mathbb{N} \} }$ being a Carleson set, see Theorems \ref{thm:CTWN} and \ref{thm:TW_singular} in the introduction.

\section{Pushing the points out}

For $z \in \T$ and $w\in \D$ let $P_{w}(z)=\frac{1-|w|^2}{|1-w\overline{z}|^2}$ be the Poisson kernel, and $P[F](w)=\int_\T P_w(z)F(z)\frac{|dz|}{2\pi}$ be the Poisson integral of $F\in L^1(\T)$.   Since Dirichlet extremal functions $\varphi$ are contractive multipliers  and the multiplier norm dominates the $H^\infty$-norm,  $1-P[|\varphi|^2]\ge 0$ for every extremal function $\varphi$.
\begin{lemma}\label{lem:ExtremalPoisson} Let $\varphi \in \mathcal D$ be any extremal function.
Then for every $e^{it}\in \T$ and for every $0< r\le s<1$ we have
$$ P[|\varphi|^2](se^{it})\le P[|\varphi|^2](re^{it})+ \Big(1-\frac{r}{s} \Big).$$
\end{lemma}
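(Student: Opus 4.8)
The statement is exactly the assertion that $\Psi(se^{it})-\Psi(re^{it})\le 1-\tfrac{r}{s}$ for the bounded harmonic function $\Psi:=P[|\varphi|^2]$, so the plan is to prove this radial increment bound. First I would reduce to $t=0$: rotations act unitarily on $\mathcal D$, carry closed multiplier-invariant subspaces to closed multiplier-invariant subspaces, and fix $f(0)$, so $z\mapsto\varphi(e^{it}z)$ is again an extremal function and $P[|\varphi(e^{it}\cdot)|^2](w)=\Psi(e^{it}w)$. Writing $w=\rho$ on the positive axis, the quantity to control becomes
\[
  \Psi(s)-\Psi(r)=\int_{\T}\bigl(P_s(\zeta)-P_r(\zeta)\bigr)\,|\varphi(\zeta)|^2\,\frac{|d\zeta|}{2\pi}.
\]

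A short computation shows that $P_s-P_r$ changes sign exactly where $\cos\theta=\frac{r+s}{1+rs}$, being positive on the ``near'' arc around $1$. Since $\int_{\T}(P_s-P_r)\,\frac{|d\zeta|}{2\pi}=0$, the bound $|\varphi|^2\le 1$ (which is all that $\|\varphi\|_\infty\le 1$ gives) yields only
\[
  \Psi(s)-\Psi(r)\le\int_{\T}(P_s-P_r)^+\,\frac{|d\zeta|}{2\pi}=\frac{4}{\pi}\arctan\sqrt{\frac{(1+s)(1-r)}{(1-s)(1+r)}}\;-\;1 .
\]
This quantity \emph{exceeds} $1-\tfrac{r}{s}$ in general (e.g.\ $r=0.9$, $s=0.95$), so the pointwise bound $|\varphi|^2\le 1$ is genuinely insufficient and the contractive-multiplier hypothesis must enter. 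I would record that hypothesis in a usable analytic form: with the Sz\H{e}go kernel $k_w(z)=\tfrac{1}{1-\overline w z}$ one has the identity $\Psi(w)=(1-|w|^2)\,\|\varphi k_w\|_{H^2}^2$, while $\varphi\in\Mult(\mathcal D)$ contractive gives $\|\varphi k_w\|_{\mathcal D}\le\|k_w\|_{\mathcal D}=(1-|w|^2)^{-1}$. The point is that the \emph{Dirichlet} norm of $\varphi k_w$, and not merely its Hardy norm, is under control; this is the extra information that $\|\varphi\|_\infty\le 1$ lacks.

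To extract the sharp constant I would feed Carleson's formula $\Psi(w)=|\varphi(w)|^2+2\int_{\D}G(w,\zeta)\,|\varphi'(\zeta)|^2\,dA(\zeta)$, with $G$ the Green's function, into the increment, giving
\[
  \Psi(s)-\Psi(r)=\bigl(|\varphi(s)|^2-|\varphi(r)|^2\bigr)+2\int_{\D}\bigl(G(s,\zeta)-G(r,\zeta)\bigr)|\varphi'(\zeta)|^2\,dA(\zeta),
\]
and then estimate the integral using the Carleson-embedding bound for $|\varphi'|^2\,dA$ attached to contractivity (constant $1$), the Green's-function difference being tractable along a single radius. The hard part will be precisely this last step: turning the one-point inequality $\|\varphi k_w\|_{\mathcal D}\le\|k_w\|_{\mathcal D}$ into the two-point radial increment with the exact constant $1-\tfrac{r}{s}$. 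Dividing by $s-r$ and letting $r\to s$ shows the bound forces $\partial_\rho\Psi(\rho)\le 1/\rho$, and the full statement is the stronger claim that the \emph{average} radial slope of $\Psi$ on $[r,s]$ is at most $1/s$ (the value at the outer endpoint); a naive slope bound only yields the weaker $\log(s/r)$. I expect the decisive estimate to come from marrying the Green's-function representation above with the sharp contractive Carleson bound, and that is where the main effort lies.
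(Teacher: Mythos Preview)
Your proposal is not a proof: you explicitly stop before the decisive step and say ``that is where the main effort lies.'' The ingredients you have assembled---the Littlewood--Paley/Green's-function representation of $P[|\varphi|^2]$ and the contractive-multiplier inequality $\|\varphi k_w\|_{\mathcal D}\le\|k_w\|_{\mathcal D}$---are valid for \emph{any} contractive multiplier of $\mathcal D$, and you do not indicate how to combine them to get the exact constant $1-\tfrac{r}{s}$ rather than the weaker $\log(s/r)$ that pointwise slope bounds yield. In other words, the gap is real, not merely cosmetic.

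What you are missing is a structural identity that is special to \emph{extremal} functions (not merely to contractive multipliers). The paper invokes equation (5.1) of Richter--Sundberg \cite{RS94}: for an extremal function $\varphi$ one has
\[
  r\bigl(1-P[|\varphi|^2](re^{it})\bigr)=\int_0^r F(u,e^{it})\,du,\qquad
  F(u,e^{it})=\int_{\T}P_{ue^{it}}(\zeta)\,D_\zeta(\varphi)\,\frac{|d\zeta|}{2\pi}\ \ge 0.
\]
Thus $r\mapsto r\bigl(1-\Psi(re^{it})\bigr)$ is \emph{nondecreasing}. For $r\le s$ this gives
$r(1-\Psi(re^{it}))\le s(1-\Psi(se^{it}))$, which rearranges to
\[
  \Psi(se^{it})\le\Bigl(1-\frac{r}{s}\Bigr)+\frac{r}{s}\,\Psi(re^{it})
  \le\Bigl(1-\frac{r}{s}\Bigr)+\Psi(re^{it}),
\]
and the lemma follows in one line. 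Note that the infinitesimal form of this monotonicity is $\Psi'(\rho)\le\bigl(1-\Psi(\rho)\bigr)/\rho$, which is strictly stronger than the bound $\Psi'(\rho)\le 1/\rho$ you were aiming for; it is this sharper differential inequality that integrates to $1-\tfrac{r}{s}$ rather than $\log(s/r)$. Your Green's-function route does not visibly produce this monotonicity, and the contractive-multiplier hypothesis alone is likely too weak---the Richter--Sundberg identity genuinely uses that $\varphi$ is extremal.
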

\begin{proof} This follows from equation (5.1) of \cite{RS94}: If $z=re^{it}\in \D$, then the extremal function $\varphi$ satisfies
$$r= \int_0^r F(u,e^{it})du + r P[|\varphi|^2](re^{it}),$$
where $F(u,e^{it})=\int_\T P_{ue^{it}}(z)D_z(\varphi) \frac{|dz|}{2\pi}.$ Thus, if $r\le s$, then
$$r(1- P[|\varphi|^2](re^{it})) =\int_0^r F(u,e^{it})du\le \int_0^s F(u,e^{it})du = s(1- P[|\varphi|^2](se^{it})).$$
This is equivalent to
$$P[|\varphi|^2](se^{it}) \le \Big( 1- \frac{r}{s} \Big) + \frac{r}{s} P[ |\varphi|^2](r e^{i t}).$$ The Lemma follows.
\end{proof}
\begin{theorem}
\label{thm:pushing_out}
  Let $Z=\{r_n e^{it_n}\}$ be a sequence in $\mathbb{D} \setminus\{0\}$ that is a zero sequence for the Dirichlet space, and for each $n$ let $0<r_n \le s_n <1$.

  Then $W=\{s_n e^{it_n}\}$ is a zero sequence for the Dirichlet space with $\gamma_{\mathcal{D}}(W) \ge \gamma_{\mathcal{D}}(Z)$.
\end{theorem}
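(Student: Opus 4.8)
The plan is to build a single competitor inside $\mathcal{M}_W=\{f\in\mathcal D: f \text{ vanishes on } W\}$ out of the extremal function of $\mathcal{M}_Z$, and to show that its quotient $\Re f(0)/\|f\|$ already beats $\gamma_{\mathcal D}(Z)$. Since $Z\subset\D\setminus\{0\}$ is a zero sequence, $\gamma_{\mathcal D}(Z)=\gamma(\mathcal M_Z)>0$, so the subspace $\mathcal M_Z$ (which is closed and multiplier invariant) has an extremal function $\varphi$; it is a contractive multiplier, vanishes on $Z$, and satisfies $\|\varphi\|=1$ and $\varphi(0)=\gamma(\mathcal M_Z)>0$. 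Writing $B_Z$ for the Blaschke product with zeros $Z=\{r_ne^{it_n}\}$, I factor $\varphi=B_Z g$ with $g\in\mathcal D$ (Carleson's formula forces $g\in\mathcal D$). The observation that drives everything is that $|g|=|\varphi|$ a.e.\ on $\T$, because $|B_Z|=1$ a.e.; hence $P[|g|^2]=P[|\varphi|^2]$.

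The candidate is $\Phi=B_Wg$, where $B_W$ is the Blaschke product with zeros $W=\{s_ne^{it_n}\}$ (note $W$ is Blaschke because $1-s_n\le 1-r_n$). The key identity is that the local Dirichlet integral of a Blaschke product is a sum of Poisson kernels, $D_\zeta(B_Z)=\sum_n P_{r_ne^{it_n}}(\zeta)$ and $D_\zeta(B_W)=\sum_n P_{s_ne^{it_n}}(\zeta)$, so Carleson's formula gives $1=\|\varphi\|^2=\|g\|^2+\sum_n P[|\varphi|^2](r_ne^{it_n})$ and $\|\Phi\|^2=\|g\|^2+\sum_nP[|\varphi|^2](s_ne^{it_n})$, whence
\[
 \|\Phi\|^2=1+\sum_n\big(P[|\varphi|^2](s_ne^{it_n})-P[|\varphi|^2](r_ne^{it_n})\big).
\]
Applying Lemma \ref{lem:ExtremalPoisson} to the extremal function $\varphi$ on each ray, with $r=r_n\le s_n=s$, bounds each summand by $1-r_n/s_n$, so $\|\Phi\|^2\le 1+\sum_n(1-r_n/s_n)$; this sum is finite (hence $\Phi\in\mathcal D$) since $s_n\ge r_n\to 1$ yields $1-r_n/s_n\le C(1-r_n)$ and $Z$ is Blaschke.

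At the origin, $\Phi(0)=B_W(0)g(0)=\big(\prod_n s_n\big)g(0)$ and $\gamma(\mathcal M_Z)=\varphi(0)=\big(\prod_n r_n\big)g(0)$ with $g(0)>0$, so $\Phi(0)=\gamma(\mathcal M_Z)\prod_n(s_n/r_n)\ge\gamma(\mathcal M_Z)>0$ is real. Combining the two estimates,
\[
 \gamma(\mathcal M_W)\ge\frac{\Re\Phi(0)}{\|\Phi\|}\ge\frac{\gamma(\mathcal M_Z)\prod_n(s_n/r_n)}{\sqrt{1+\sum_n(1-r_n/s_n)}},
\]
and the whole theorem reduces to the elementary inequality $\prod_n(s_n/r_n)^2\ge 1+\sum_n(1-r_n/s_n)$, which follows from $\ln(s_n/r_n)\ge 1-r_n/s_n$ together with $e^{2x}\ge 1+2x$. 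This gives $\gamma_{\mathcal D}(W)=\gamma(\mathcal M_W)\ge\gamma_{\mathcal D}(Z)$; in particular $\gamma(\mathcal M_W)>0$, so $\mathcal M_W$ contains a nonzero function, $W$ is not a uniqueness set, and by the zero-set/uniqueness dichotomy for $\mathcal D$ (subsets of zero sets are zero sets) $W$ is a zero sequence.

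I expect the main difficulty to be the mismatch between the two halves of the estimate. Lemma \ref{lem:ExtremalPoisson} does not give the clean monotonicity $P[|\varphi|^2](s_ne^{it_n})\le P[|\varphi|^2](r_ne^{it_n})$ that would yield $\|\Phi\|\le 1$ directly; it only controls the push-out up to the additive defect $1-r_n/s_n$, so a naive norm bound fails. The real content is that the very same radial push-out enlarges $\Phi(0)$ by the factor $\prod_n(s_n/r_n)>1$ coming from the Blaschke products at the origin, and the elementary inequality certifies that this gain always dominates the accumulated defect. The remaining, secondary, technical point is the convergence bookkeeping for the infinite product and sum, which is handled using only that $Z$ is a Blaschke sequence.
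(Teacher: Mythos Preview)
Your proof is correct and follows essentially the same route as the paper: take the extremal function $\varphi=B_Zg$ of $\mathcal M_Z$, form the competitor $B_Wg$, use Carleson's formula together with Lemma~\ref{lem:ExtremalPoisson} to bound $\|B_Wg\|^2\le 1+\sum_n(1-r_n/s_n)$, and then check via the elementary inequalities $1-y\le\log(1/y)$ and $1+x\le e^x$ that the gain $\prod_n(s_n/r_n)$ in the value at the origin dominates this defect. The only cosmetic difference is that the paper packages the final step as $\|B_Wg\|^2\le |B_W(0)/B_Z(0)|^2$ directly, whereas you phrase it as $\prod_n(s_n/r_n)^2\ge 1+\sum_n(1-r_n/s_n)$; these are the same inequality.
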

Notice that if $\{\lambda_n\}\subseteq \T$ and if $\mu=\sum_{n=1}^\infty a_n \delta_{\lambda_n}$ and $\nu= \sum_{n=1}^\infty b_n \delta_{\lambda_n}$ for summable sequences that satisfy $0<b_n\le a_n$, then by Carleson's formula it is obvious that $S_\nu$ is a factor of $\mathcal D$, whenever $S_\mu$ is, and we have $ \gamma(S_\nu)\ge \gamma(S_\mu)$. Thus, Theorem \ref{thm:pushing_out} may be considered evidence that Question \ref{ques:SingVsZeros} has a positive answer.

\begin{proof} Let $\varphi\in \mathcal D$ be the extremal function for $$I(Z)=\{f\in D: f(z)=0 \text{ whenever }z\in Z, \text{ counting multiplicity}\}.$$ Since $Z$ is a zero set for $\mathcal D$, the sequence $Z$ satisfies the Blaschke condition and hence $W$ will satisfy the Blaschke condition. Write $B_Z$ and $B_W$ for the Blaschke products with zeros $Z$ and $W$ respectively. Then $\varphi=B_Zf$ for some  function $f\in H^2$, and by Carleson's formula for the Dirichlet integral (see Section \ref{sec:prelim}),  we have
$1=\|B_Zf\|^2= \sum_{n\ge 1} P[|f|^2](z_n)+\|f\|^2$. Since $|\varphi|=|f|$ a.e. on $\T$, Lemma \ref{lem:ExtremalPoisson} together with Carleson's formula  implies
\begin{align*}\|B_Wf\|^2&=\sum_{n\ge 1} P[|f|^2](w_n)+\|f\|^2\\
&\le \sum_{n\ge 1} P[|f|^2](z_n)+\sum_{n\ge 1} \Big(1- \Big|\frac{z_n}{w_n} \Big|\Big)+\|f\|^2\\
&=1+\sum_{n\ge 1} \Big(1- \Big|\frac{z_n}{w_n} \Big|\Big).\end{align*}
Using the inequalities $1+x\le e^x$ and $1-y\le \log \frac{1}{y}$ this implies
\[\|B_Wf\|^2\le \exp \Big({\sum_{n\ge 1} (1-|\frac{z_n}{w_n}|)} \Big) \le \prod_{n\ge 1} \Big|\frac{w_n}{z_n} \Big|= \Big|\frac{B_W(0)}{B_Z(0)} \Big|\le \Big|\frac{B_W(0)}{B_Z(0)}\Big|^2 .\]

This implies that $B_Wf\in \mathcal D$ and hence $W$ is a zero set for $\mathcal D$. Furthermore,

\[\gamma_{\mathcal{D}}(W) \ge \frac{|B_W(0)f(0)|}{\|B_Wf\|}\ge |B_Z(0)f(0)|=|\varphi(0)|=\gamma_{\mathcal{D}}(Z). \qedhere\]
\end{proof}

In the appendix,
we will construct an example of a radial complete Pick space on $\mathbb{D}$
in which Theorem \ref{thm:pushing_out} fails.

\section{Singular inner factors: rapidly decaying weights}

In this section, we will prove Theorem \ref{thm:SS_intro}, our analogue of the Shapiro--Shields theorem for singular inner factors. The key to proving this result is to understand the extremal function of the invariant subspace generated by an atomic singular inner function.

We start with an approximation result, which is a variant of Lemma \ref{lem:gamma_continuous} in the Dirichlet space.
Crucially, we do not assume that the sequence of subspaces is decreasing.

\begin{lemma}
  \label{lem:extremal_inner_convergence}
 Let $B_n,S$ be inner functions such that $B_n\to S$ locally uniformly in $\mathbb{D}$, $S(0)\ne 0$, and $SH^2\cap \mathcal D\ne (0)$.
 Assume that
\begin{itemize}
  \item there exists $c> 0$ such that $D_z(B_n)\le c \ D_z(S)$ for all $n\in \N$ and for all  $z\in \partial \mathbb{D}$, and
 \item   for all $z\in \partial \mathbb{D}$  we have $D_z(B_n)\to D_z(S)$ as $n\to \infty$.
  \end{itemize}
  Let $\varphi_n$ denote the extremal function for $B_n H^2\cap \mathcal D$ and $\varphi$ the extremal function for $SH^2\cap \mathcal D$.
  Then $\varphi_n\to \varphi$ in norm.
  In particular,
  \[
    \lim_{n \to \infty} \gamma(B_n H^2\cap \mathcal D) = \gamma(SH^2\cap \mathcal D).
  \]
\end{lemma}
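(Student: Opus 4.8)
The plan is to prove convergence in norm of the extremal functions $\varphi_n \to \varphi$ via a two-sided argument, using Carleson's formula to control norms and the compactness afforded by the Fatou property of $\mathcal{D}$. Recall that for an inner function $I$ and $g \in H^2$ we have the identity $\|Ig\|^2 = \int_{\mathbb{T}} D_z(I) |g(z)|^2 \frac{|dz|}{2\pi} + \|g\|^2$. Write $\varphi_n = B_n g_n$ with $g_n \in H^2$, and $\varphi = S g$ with $g \in H^2$; since the $\varphi_n$ are contractive multipliers (complete Pick property), in particular $\|\varphi_n\| \le 1$, so $\{\varphi_n\}$ is bounded.

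First I would establish the upper bound $\limsup_n \gamma(B_n H^2 \cap \mathcal{D}) \le \gamma(S H^2 \cap \mathcal{D})$. The natural way is to extract, via Lemma \ref{lem:compact}, a locally uniformly convergent subsequence $\varphi_{n_j} \to \psi$ with $\psi \in \mathcal{D}$ and $\|\psi\| \le \liminf_j \|\varphi_{n_j}\| \le 1$. Since $B_{n_j} \to S$ locally uniformly and $S(0) \ne 0$, a Hurwitz-type argument shows that $\psi$ is divisible by $S$, i.e.\ $\psi \in S H^2 \cap \mathcal{D}$. Then $\Re \psi(0) = \lim_j \Re \varphi_{n_j}(0) = \lim_j \gamma(B_{n_j} H^2 \cap \mathcal{D})$, and since $\psi$ is a competitor for the extremal problem defining $\gamma(S H^2 \cap \mathcal{D})$, we get the desired bound. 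Applying this to every subsequence gives $\limsup_n \gamma(B_n H^2 \cap \mathcal{D}) \le \gamma(S H^2 \cap \mathcal{D})$.

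For the reverse inequality I would use the extremal $\varphi = S g$ for $S H^2 \cap \mathcal{D}$ as a test object for the $B_n$-problems. The function $B_n g$ lies in $\mathcal{D}$: by Carleson's formula, $\|B_n g\|^2 = \int_{\mathbb{T}} D_z(B_n) |g(z)|^2 \frac{|dz|}{2\pi} + \|g\|^2$, and the domination hypothesis $D_z(B_n) \le c\, D_z(S)$ together with $\|Sg\|^2 < \infty$ bounds this uniformly. The pointwise convergence $D_z(B_n) \to D_z(S)$ combined with the integrable dominating function $c\, D_z(S) |g(z)|^2$ lets me invoke dominated convergence to conclude $\|B_n g\| \to \|S g\| = \|\varphi\| = 1$. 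Since $B_n g / \|B_n g\|$ is a unit-norm competitor for the $B_n$-problem with value at $0$ tending to $\varphi(0) = \gamma(S H^2 \cap \mathcal{D})$, this yields $\liminf_n \gamma(B_n H^2 \cap \mathcal{D}) \ge \gamma(S H^2 \cap \mathcal{D})$, completing the proof of the limit of the $\gamma$'s and, together with strict convexity of the Hilbert norm and uniqueness of extremals, upgrading weak/local convergence of $\varphi_n$ to norm convergence.

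The main obstacle I anticipate is the first (upper bound) step, specifically verifying that the local uniform limit $\psi$ of the extremal functions is genuinely divisible by the full inner function $S$ rather than only by a partial factor. Local uniform convergence $B_n \to S$ only directly forces $\psi$ to vanish to the right order at the zeros of the Blaschke part of $S$; capturing the singular factor requires more care, presumably leveraging the hypotheses $D_z(B_n) \to D_z(S)$ and $D_z(B_n) \le c\, D_z(S)$ to ensure no Dirichlet-energy mass is lost in the limit. The cleanest route is probably to argue that $\psi / S \in H^2$ directly: since $\|\psi\|_{H^2} \le 1$ and the $H^2$ inner factor of $\psi$ must contain $S$ because $B_n \to S$ forces the corresponding inner factors to converge, one obtains $\psi \in S H^2$, and then Carleson's formula places $\psi$ in $\mathcal{D}$. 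Once divisibility is secured, the extremal comparison is routine.
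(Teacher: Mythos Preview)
Your overall strategy matches the paper's: a two-sided estimate on $\gamma$, with the lower bound obtained by testing $B_n g$ (where $\varphi = Sg$) and applying dominated convergence to Carleson's formula, and the upper bound via compactness, followed by an upgrade from weak to norm convergence using unit norms and uniqueness of extremals.

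The obstacle you flag in the upper bound---showing that the local-uniform limit $\psi$ of $\varphi_{n_j}$ lies in $S H^2$---is real, and your suggested fix (``$B_n \to S$ forces the corresponding inner factors to converge'') is too vague to stand as a proof; convergence of inner factors under local-uniform limits is delicate in general, and appealing to it here would require at least as much work as the lemma itself. The paper sidesteps the issue entirely by passing to the cofactors rather than the extremal functions. Writing $\varphi_n = B_n \psi_n$, Carleson's formula gives $\|\psi_n\|_{\mathcal{D}}^2 \le \|B_n \psi_n\|_{\mathcal{D}}^2 = 1$, so $\{\psi_n\}$ is bounded in $\mathcal{D}$ (not merely in $H^2$). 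Extract a weakly convergent subsequence $\psi_{n_k} \to \psi$ in $\mathcal{D}$; then $B_{n_k} \psi_{n_k} \to S\psi$ pointwise on $\mathbb{D}$ and remains bounded in $\mathcal{D}$, hence converges weakly to $S\psi$. Now $S\psi$ is \emph{manifestly} in $S H^2 \cap \mathcal{D}$ with $\|S\psi\| \le 1$, and the divisibility question never arises. This is the one substantive point where your proposal falls short of a complete argument; once you switch to extracting limits of the cofactors, everything else goes through exactly as you describe.
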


\begin{proof}
  For ease of notation, we will write $\gamma(S) = \gamma(SH^2\cap \mathcal D)$ and $\gamma(B_n) = \gamma(B_n H^2\cap \mathcal D)$.
  Let $\varphi$ be the extremal function for $SH^2\cap \mathcal D$, and write $\varphi = S \psi$, where $\psi \in H^2$.
  Carleson's formula for the Dirichlet integral shows that whenever $If\in \mathcal D$ for some inner function $I$ and $f\in H^2$, then $f\in \mathcal D$ and \begin{align*}\|If\|^2=\int_{|z|=1}D_z(I)|f(z)|^2 \frac{|dz|}{2\pi}+ \|f\|^2; \end{align*}
  see Section \ref{sec:prelim}.
  Thus, the hypotheses and the dominated convergence theorem imply that $\|B_n\psi\|\to \|S\psi\| = 1$, so
  \begin{equation}
    \label{eqn:inner_convergence_1}
    \liminf_{n \to \infty} \gamma(B_n) \ge \liminf_{n \to \infty} \frac{\Re ( B_n(0) \psi(0))}{\|B_n \psi\|} = S(0) \psi(0) = \gamma(S).
  \end{equation}

  Conversely, let $\varphi_n = B_n \psi_n$ be the extremal function for $B_n H^2\cap \mathcal D$, where $\psi_n \in H^2$. By Carleson's formula, $\|\psi_n\| \le 1$ for all $n$,
  so there exists a subsequence $\{\psi_{n_k}\}$ that converges weakly to some $\psi \in \mathcal{D}$. Then $\varphi_{n_k} = B_{n_k} \psi_{n_k}$ converges weakly to $S \psi$; whence
  $\|S \psi\| \le 1$. It follows that
  \begin{equation*}
    \gamma(S) \ge S(0) \psi(0) = \lim_{k \to \infty} \varphi_{n_k}(0) = \lim_{k \to \infty} \gamma(B_{n_k}) \ge \gamma(S),
  \end{equation*}
  where the last inequality follows from \eqref{eqn:inner_convergence_1}. Thus, $\gamma(S) = S(0) \psi(0)$,
  so by uniqueness of the extremal function, $S \psi$ is the extremal function for $SH^2\cap \mathcal D$.

  This argument shows that every subsequence of $\{\varphi_n\}$ has a further subsequence that converges weakly to the extremal function $\varphi$ for $SH^2\cap \mathcal D$; hence $\{\varphi_n\}$ converges weakly to $\varphi$. Since $\|\varphi_n\| = 1$ for all $n$ and $\|\varphi\| = 1$,  the convergence is in norm.
\end{proof}

It follows from Carleson's formula for the Dirichlet integral (see Section \ref{sec:prelim})
that each atomic singular inner function $S(z) = \exp( - a \frac{1+z}{1-z})$, where $a > 0$, is an inner factor of $\mathcal{D}$. Indeed, $(1-z) S(z) \in \mathcal{D}$. We now compute the extremal function for the corresponding invariant subspace.

\begin{theorem}\label{ExtremalFunction} For $a>0$ let $S(z)= e^{-a\frac{1+z}{1-z}}$ and let $\varphi$ be the extremal function for $SH^2\cap \mathcal D$.
Then $$(z\varphi)'(z)= A^{-1/2}\ \frac{A-z}{1-z}\ e^{-a \frac{1+z}{1-z}},$$ where
$A= \frac{\int_0^1 th(t)dt}{\int_0^1h(t)dt}$ for $h(t)=\frac{e^{-a\frac{1+t}{1-t}}}{1-t}$.
\end{theorem}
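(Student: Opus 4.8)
The plan is to pin down $\varphi$ from the variational characterization of the extremal function and then read off the first-order formula for $g := (z\varphi)'$. Since the reproducing kernel of $\mathcal{D}$ at $0$ is the constant function $1$, point evaluation at $0$ is $f \mapsto \la f, 1 \ra$, and with $\gamma := \gamma(SH^2 \cap \mathcal D) = \varphi(0) > 0$ the extremal function is the normalized projection of $1$ onto $\mathcal{M} := SH^2 \cap \mathcal D$. Equivalently, $\varphi \in \mathcal{M}$, $\|\varphi\| = 1$, and $1 - \gamma \varphi \perp_{\mathcal D} \mathcal M$, i.e. $\la f, \varphi \ra_{\mathcal D} = \gamma^{-1} f(0)$ for all $f \in \mathcal M$. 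By Carleson's formula with $D_z(S) = \frac{2a}{|1-z|^2}$ (from \eqref{eqn:local_Diri_inner} with $\mu = a \delta_1$), one has $\mathcal M = \{Sh : h \in \mathcal D,\ \int_{\T} \frac{|h|^2}{|1-z|^2}\,|dz| < \infty\}$, which is dense in $SH^2$, so it suffices to test the orthogonality on $f = Sh$.

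To find the \emph{shape} of $g$, write $\psi := \varphi/S \in H^2$ and $G := g/S$, both holomorphic on $\D$ since $S$ has no zeros. Using the identity $\la F, H \ra_{\mathcal D} = \la F, (zH)' \ra_{H^2}$ together with the polarized Carleson formula $\la Sh, S\psi\ra_{\mathcal D} = \int_{\T} D_z(S) h \overline{\psi}\,\tfrac{|dz|}{2\pi} + \la h, \psi\ra_{\mathcal D}$, the orthogonality condition becomes $\int_{\T} h\,(\overline{G} - \gamma^{-1} e^{-a})\,\tfrac{|dz|}{2\pi} = 0$ for all admissible $h$. Here I would use $|S| = 1$ on $\T$ and the boundary identity $\frac{1}{|1-z|^2} = \frac{-z}{(1-z)^2} = \frac{1}{2a} z \frac{S'}{S}$, which makes the $D_z(S)\psi$ term combine with $(z\psi)'$ into exactly $G$. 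Formally this says $G - \gamma^{-1} e^{-a}$ has only strictly negative Fourier frequencies; since $G$ is holomorphic on $\D$, consistency forces the boundary values of $G - \gamma^{-1} e^{-a}$ to continue anti-holomorphically across $\T \setminus \{1\}$, so that $G$ extends to a rational function whose only pole is at $z = 1$. A Dirichlet-integral estimate forces this pole to be simple, whence $G(z) = c\,\frac{A-z}{1-z}$ for constants $c > 0$ and $A \in \R$; equivalently $(z\varphi)' = c\,\frac{A-z}{1-z}\,S$.

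The two constants are then determined separately. Integrating gives $z\varphi(z) = c \int_0^z \frac{A-w}{1-w} S(w)\,dw$. The requirement $\varphi \in \mathcal M$, i.e. $\psi = \varphi/S \in H^2$, forces $\varphi$ to vanish at $z = 1$, since $S(z) \to 0$ super-exponentially as $z \to 1$; letting $z \to 1$ along $(0,1)$ this is precisely $\int_0^1 \frac{A-w}{1-w} S(w)\,dw = 0$. Splitting $\frac{A-w}{1-w} = 1 + \frac{A-1}{1-w}$ and recognizing $h(t) = \frac{S(t)}{1-t}$ from the statement turns this into $A \int_0^1 h = \int_0^1 t h$, i.e. the stated $A = \frac{\int_0^1 t h(t)\,dt}{\int_0^1 h(t)\,dt} \in (0,1)$. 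Finally $c$ is fixed by $\|\varphi\| = 1$: since $\|\varphi\|^2 = \la \varphi, g \ra_{H^2} = \int_{\T} \psi \overline{G}\,\tfrac{|dz|}{2\pi} = c\,\psi(0)$ and $\psi(0) = \varphi(0)/S(0) = cA$, one gets $c^2 A = 1$, so $c = A^{-1/2}$, which yields the claimed formula.

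The main obstacle is that $g = (z\varphi)'$ is \emph{not} in $L^2(\T)$: on the circle $|g| = c\,\frac{|A-z|}{|1-z|} \sim \frac{1}{|1-z|}$ near $z=1$, so the frequency heuristics used above to guess the shape of $G$ are not literally valid, and the order of the pole at $z = 1$ must be controlled. I expect the cleanest rigorous route is to reverse the logic: \emph{define} $\Phi$ by $(z\Phi)' = A^{-1/2}\frac{A-z}{1-z}S$ with $A$ as above, then verify directly that $\Phi \in SH^2 \cap \mathcal D$ (this is exactly where $\int_0^1 \frac{A-w}{1-w} S = 0$ is used, so that $\Phi/S \in H^2$), that $\|\Phi\| = 1$ and $\Phi(0) > 0$, and that $1 - \Phi(0)\Phi \perp_{\mathcal D} (SH^2 \cap \mathcal D)$ via the boundary computation $\la Sh, \Phi\ra_{\mathcal D} = A^{-1/2} h(0)$, in which the singular factor $\frac{1-Az}{1-z}$ is integrated against a test function $h$ vanishing at $1$, so the pairing is a genuine $H^1$ mean value. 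By uniqueness of the extremal function, $\Phi = \varphi$. Throughout, the delicate part is justifying these boundary pairings against the non-$L^2$ function $g$, all of which are tamed by the vanishing of the admissible test functions $Sh$ at $z = 1$.
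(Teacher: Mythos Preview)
Your approach is genuinely different from the paper's and the variational heuristic is a nice way to \emph{guess} the formula: the identification of $G=(z\varphi)'/S$ via the polarized Carleson identity and the boundary relation $\tfrac{2a}{|1-z|^2}=z\tfrac{S'}{S}$ on $\T$ is correct, and the determination of $A$ from the vanishing of $\int_0^1\frac{A-t}{1-t}S(t)\,dt$ and of $c$ from $c^2A=1$ matches the answer.

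The gap is in the reverse verification. Showing that the candidate $\Phi$ lies in $SH^2$ is \emph{not} a consequence of the radial limit $\Phi(r)\to 0$. You correctly observe that $\psi=\Phi/S\in H^2$ implies $|\psi(r)|\lesssim(1-r)^{-1/2}$ and hence $\Phi(r)\to 0$, but the converse fails: membership $\psi\in H^2$ requires a uniform bound on $\int_{\T}|\Phi(r\zeta)/S(r\zeta)|^2\,|d\zeta|$, and near $\zeta=1$ the modulus $|S(r\zeta)|$ can be arbitrarily small along \emph{tangential} approach, where the exponential decay of $S$ is not available. One needs a two-variable estimate on $\Phi$ near $1$ (via the contour integral representation $z\Phi(z)=-c\int_z^1\tfrac{A-w}{1-w}S(w)\,dw$), and this is genuine work you have not carried out. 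The same issue affects the proof that $\Phi\in\mathcal D$: the function $(z\Phi)'=c\tfrac{A-z}{1-z}S$ is \emph{not} bounded on $\D$ (take $z\to 1$ tangentially), so the Dirichlet integral needs to be controlled directly. Finally, the orthogonality check $\Phi\perp z\mathcal M$ formally reduces to $\int_{\T} zh\cdot\tfrac{1-Az}{1-z}\,\tfrac{|dz|}{2\pi}=0$, which is immediate for polynomial $h$, but passing from the well-defined pairing $\la zSh,\Phi\ra_{\mathcal D}$ to the boundary integral involves the non-$L^2$ function $(z\Phi)'$ and needs a limiting argument you only gesture at.

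By contrast, the paper's proof approximates $S$ by the Blaschke products $B_n(z)=\bigl(\tfrac{r-z}{1-rz}\bigr)^n$ with $r=1-a/n$, computes the extremal function $\varphi_n$ of the finite-codimensional subspace $B_nH^2\cap\mathcal D$ explicitly (as $P_n1/\|P_n1\|$, a rational function obtained from the reproducing kernels $k_r^{(j)}$), obtains the closed form $(zf_n)'(z)=r^n\tfrac{A_n-z}{1-rz}\bigl(\tfrac{r-z}{1-rz}\bigr)^{n-1}$, and then passes to the limit $n\to\infty$ using a general convergence lemma for extremal functions under pointwise control of the local Dirichlet integrals $D_z(B_n)$. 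This bypasses entirely the membership and pairing questions your approach must confront: each $\varphi_n$ is automatically in $B_nH^2\cap\mathcal D$ and automatically orthogonal to $z\mathcal M_n$ by construction. The trade-off is that your method, if completed, would be more direct and more informative about \emph{why} the extremal function has this particular shape; the paper's method is more robust but computationally heavier (tracking the constants $A_n$, the dominated-convergence argument for $h_n\to h$, etc.).
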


Note that $0<A<1$ and if $a\to 0$, then $A\to 1$, because $\frac{1}{1-t}\notin L^1[0,1]$.

\begin{proof}
We will fix $a>0$ and use the notation as in the theorem. Choose $n\in \N$ so that  $r=1-\frac{a}{n}>0$, and set $B_n(z)= \left(\frac{r-z}{1-rz}\right)^n$.
We define $$\mathcal{M}_n=\{f\in \mathcal D: f(r)=f'(r)=\dots =f^{(n-1)}(r)=0\}$$
and we let $\varphi_n$ denote the extremal function for $\mathcal{M}_n$. Then $\mathcal{M}_n=B_nH^2\cap \mathcal D$. In particular, we write $\varphi_n=B_n\psi_n$ for some $\psi_n\in H^2$.

Using the definition of $r=r(n)$ we obtain \begin{align}\label{equ:F(z,n)}B_n(z)= \left(1-\frac{a}{n}\frac{1+z}{1-z}+O\Big(\frac{1}{n^2}\Big)\right)^n. \end{align} Here  we have used $O(\frac{1}{n^2})$ for a remainder term that has modulus $\le C/n^2$ for all $n$, where $C>0$ may depend on $z$.
Hence $B_n \to S$ locally uniformly on $\mathbb{D}$ as $n\to \infty$.
Moreover, by \eqref{eqn:local_Diri_inner},
\begin{align*}D_z(B_n)&=  n \frac{1-r^2}{|1-rz|^2}= \frac{2a(1-\frac{a}{2n})}{|1-rz|^2}\le  \frac{8a}{|1-z|^2}=4D_z(S) \text{ and }\\ D_z(B_n)&\to D_z(S) \text{ as }n\to \infty.
\end{align*} Hence Lemma \ref{lem:extremal_inner_convergence} implies that $\varphi_n \to \varphi$ in norm. (This approximation idea has also been  useful in the context of Bergman spaces; see for instance the discussion at the end of Chapter 5 in \cite{DS04}.)

Next, we compute $\varphi_n$.
For $j=0, 1,\dots$ let $k^{(j)}_w(z)$ be the reproducing kernel for $\mathcal D$ for the $j$th derivative at $w$. Let $P_n$ be the orthogonal projection onto $\mathcal{M}_n$. Then $1-P_n 1\in \mathcal{M}_n^\perp$, hence $f_n:=P_n 1=1-\sum_{j=0}^{n-1}b_j k^{(j)}_r$ for some $b_0, \dots, b_{n-1}\in \C$. The properties of the projections imply that $\|f_n\|^2=\la f_n,1\ra=f_n(0)$, hence $f_n/\sqrt{f_n(0)}$ is a unit vector in $\mathcal{M}_n\ominus z\mathcal{M}_n$, which is $>0$ at 0. It follows that $\varphi_n=f_n/\sqrt{f_n(0)}$.

Note that $(zf_n)'=1-\sum_{j=0}^{n-1}b_j (zk_r^{(j)})'$ and one verifies by induction that for $j\ge 0$ we have $(zk_r^{(j)})'= \frac{j!z^j}{(1-rz)^{j+1}}$.
 Hence
$$(zf_n)'=1-\sum_{j=0}^{n-1} \frac{b_{j} j! z^{j}}{(1-rz)^{j+1}}= \frac{p(z)}{(1-rz)^n},$$ where $p$ is a polynomial of degree $n$ such that $\lim_{|z|\to \infty} \frac{p(z)}{(1-rz)^n}=1$. Since $f_n\in \mathcal{M}_n$ we note that $(zf_n)'$ and hence $p$, has a zero of multiplicity $n-1$ at $r$. Thus,
\begin{align}\label{Equ:zf_n prime}(zf_n)'(z)= r^{n} \frac{A_n-z}{1-rz}\left(\frac{r-z}{1-rz}\right)^{n-1}\end{align} for some $A_n\in \C$. In fact,  $A_n$ is determined by $f_n(r)=0$. Since $$f_n(r)=\int_0^1 (zf_n)'(tr)dt= r^{n}\int_0^1 \frac{A_n-rt}{1-r^2t}\left(\frac{r-rt}{1-r^2t}\right)^{n-1}dt$$ we have
$$A_n=r \frac{\int_0^1th_n(t) dt}{\int_0^1 h_n(t)dt},$$ where  $h_n(t)= \frac{r^{n-1}}{1-r^2t}\left(\frac{1-t}{1-r^2t}\right)^{n-1}$.

Then $ h_n(1)=0$ for each $n$, and one calculates that $$h_n'(t)= -r^{n-1}\frac{r^2t+n(1-r^2)-1}{(1-r^2t)^2 (1-t)} \left(\frac{1-t}{1-r^2t}\right)^{n-1}.$$
Recall that $r=1-\frac{a}{n}$, hence for $t>1-a$ we have
$$r^2t+n(1-r^2)-1= r^2 t +a(1+r)-1>r^2t+a(1+r^2) -1>r^2 +a-1> a(1-\frac{2}{n}).$$ This shows that $h_n'(t)<0$ whenever $n\ge 2$ and $t\in [0,1)$ with $t>1-a$.

Similarly, we obtain $$\frac{1-t}{1-r^2t}= 1-t\frac{1-r^2}{1-r^2t} = 1-\frac{2at}{n}\frac{1-\frac{a}{2n}}{1-t+\frac{2at}{n}(1-\frac{a}{2n})} = 1 - \frac{2 a t}{n(1-t)} + O \Big( \frac{1}{n^2} \Big).$$

Hence for each $t\in [0,1)$ we have $$h_n(t)\to \frac{e^{-a}}{1-t}e^{-\frac{2at}{1-t}}= \frac{e^{-a\frac{1+t}{1-t}}}{1-t}=:h(t).$$ In fact, since for $n\ge 2$ each $h_n$ decreases to $0$ in a fixed interval near 1, one checks  that the sequence $\{h_n\}$ is uniformly bounded on $[0,1)$.

This implies that
$$\lim_{n\to \infty}A_n=A= \frac{\int_0^1 th(t)dt}{\int_0^1h(t)dt}. $$

\

Next we use  equations (\ref{equ:F(z,n)}) and (\ref{Equ:zf_n prime}) to conclude that $$(zf_n)'(z) \to e^{-a}\frac{A-z}{1-z}e^{-a \frac{1+z}{1-z}}$$ for all $z\in \mathbb{D}$. Note that $f_n(0)=(zf_n)'(0)$. Hence  $(z\varphi_n)'(z) =\frac{(zf_n)'(z)}{\sqrt{f_n(0)}}\to A^{-1/2} \frac{A-z}{1-z}e^{-a \frac{1+z}{1-z}}$. Since we already established that $\varphi_n\to \varphi$ locally uniformly, this proves the theorem.
\end{proof}

Based on Theorem \ref{ExtremalFunction}, one can express the extremal fuction $\varphi$ itself with the help of the exponential integral $E_1$, but we will not do so here.
What is important for us is the value of $\varphi$ at $0$.

\begin{corollary}
  \label{cor:atomic_gamma}
  Let $\lambda \in \mathbb{T}$, let $a > 0$, let $S_a(z) = \exp({ -a \frac{\lambda+z}{\lambda-z}})$, and let $\varphi_a$ be the extremal function for $S_aH^2\cap \mathcal D$.
  Then
  \[
    1-\varphi_a(0)=1 - \gamma(S_a H^2 \cap \mathcal{D}) \sim \frac{1}{ 2 \log\frac{1}{a}}  \quad \text{ as } a \to 0.
  \]
\end{corollary}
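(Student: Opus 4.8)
The plan is to read off the value $\varphi_a(0)$ from the explicit formula in Theorem~\ref{ExtremalFunction} and then analyze the resulting constant $A$ as $a \to 0$. First I would use rotation invariance: replacing $z$ by $\overline{\lambda} z$ identifies $S_a$ with $\exp(-a\frac{1+z}{1-z})$ and carries its extremal function to the one computed in Theorem~\ref{ExtremalFunction}, so we may assume $\lambda = 1$. Since $(z\varphi_a)'(z) = \varphi_a(z) + z\varphi_a'(z)$ equals $\varphi_a(z)$ at the origin, evaluating the formula of Theorem~\ref{ExtremalFunction} at $z = 0$ gives
$$
  \gamma(S_a H^2 \cap \mathcal{D}) = \varphi_a(0) = (z\varphi_a)'(0) = A^{-1/2}\cdot A\cdot e^{-a} = \sqrt{A}\,e^{-a},
$$
where $A = A(a)$ is the constant from the theorem.

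Thus the whole problem reduces to the asymptotics of $A$ as $a \to 0$. Writing $1 - \sqrt{A}\,e^{-a} = (1 - \sqrt{A}) + \sqrt{A}(1 - e^{-a})$, the second summand is $O(a)$, hence negligible compared with $\frac{1}{\log(1/a)}$; and since $A \to 1$ (as noted after the theorem), we have $1 - \sqrt{A} = \frac{1-A}{1+\sqrt{A}} \sim \frac{1-A}{2}$. Therefore it suffices to prove that $1 - A \sim \frac{1}{\log\frac{1}{a}}$.

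To this end, using $(1-t)h(t) = e^{-a\frac{1+t}{1-t}}$ I would write
$$
  1 - A = \frac{\int_0^1 (1-t)h(t)\,dt}{\int_0^1 h(t)\,dt}
  = \frac{\int_0^1 e^{-a\frac{1+t}{1-t}}\,dt}{\int_0^1 \frac{1}{1-t}\,e^{-a\frac{1+t}{1-t}}\,dt}.
$$
The substitution $s = \frac{1+t}{1-t}$ turns the numerator into $\int_1^\infty \frac{2\,e^{-as}}{(1+s)^2}\,ds$, which converges to $\int_1^\infty \frac{2}{(1+s)^2}\,ds = 1$ by dominated convergence, and turns the denominator into $\int_1^\infty \frac{e^{-as}}{1+s}\,ds$. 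The latter diverges logarithmically: a further substitution $u = as$ converts it to $\int_a^\infty \frac{e^{-u}}{u+a}\,du$, whose logarithmic part is $\int_a^1 \frac{du}{u+a} = \log\frac{1+a}{2a} \sim \log\frac1a$, the remaining contributions being bounded (for instance via $|e^{-u}-1| \le u$ on $[a,1]$ and $\frac{e^{-u}}{u+a} \le \frac{e^{-u}}{u}$ on $[1,\infty)$). Combining, $1 - A \sim \frac{1}{\log(1/a)}$, and the reductions above give $1 - \varphi_a(0) \sim \frac{1}{2\log\frac{1}{a}}$.

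The main obstacle is this final step: pinning down the exact logarithmic rate of the denominator integral, and in particular that it is $\log\frac1a$ with coefficient $1$ rather than some other constant, since this is precisely what produces the factor $\frac12$ in the stated asymptotic. The algebraic reduction and the convergence of the numerator are routine, but the singularity of $\frac{1}{1-t}$ at $t = 1$ must be handled carefully to isolate the correct logarithmic contribution.
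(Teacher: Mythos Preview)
Your proposal is correct and follows essentially the same approach as the paper: extract $\varphi_a(0)=\sqrt{A}\,e^{-a}$ from Theorem~\ref{ExtremalFunction}, reduce to showing $1-A\sim 1/\log\frac{1}{a}$, and analyze the ratio of integrals via a substitution that produces an exponential-integral with logarithmic asymptotics. The only cosmetic difference is the choice of substitution: the paper sets $s=\frac{a}{1-t}$ and uses an integration by parts to handle the numerator, whereas your substitution $s=\frac{1+t}{1-t}$ makes the numerator converge directly to $1$ by dominated convergence; both routes land on the same $\log\frac{1}{a}$ behavior for the denominator.
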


\begin{proof}
  By rotation invariance, we may assume that $\lambda = 1$.
  By Theorem \ref{ExtremalFunction}, we have
  \begin{equation*}
    \gamma(S_aH^2 \cap \mathcal{D}) = A^{1/2} e^{-a},
  \end{equation*}
  where
  $A= \frac{\int_0^1 th(t)dt}{\int_0^1h(t)dt}$ for $h(t)=\frac{e^{-a\frac{1+t}{1-t}}}{1-t}$,
  and $A \to 1$ as $a \to 0$. Hence
  \begin{equation}
    \label{eqn:atomic_gamma}
    1 - \gamma(S_aH^2 \cap \mathcal{D}) \sim \frac{1}{2} (1 - A e^{-2 a}) = \frac{1}{2} (1 - A) + O(a).
  \end{equation}
  Since $h(t)=e^{-a}\frac{e^{\frac{-2a}{1-t}}}{1-t}$, we may substitute $s=\frac{a}{1-t}$ and integrate by parts we obtain
$$1-A= \frac{\int_0^1 e^{\frac{-2a}{1-t}}dt}{\int_0^1 \frac{e^{\frac{-2a}{1-t}}}{1-t}dt}=\frac{a\int_a^\infty e^{-2s}\frac{1}{s^2}ds}{\int_a^\infty e^{-2s}\frac{1}{s}ds}=\frac{a\left(\frac{e^{-2a}}{a}-2\int_a^\infty e^{-2s}\frac{1}{s}ds\right)}{\int_a^\infty e^{-2s}\frac{1}{s}ds}.$$
Furthermore, we have
\begin{align*}\int_a^\infty \frac{e^{-2s}}{s}ds&=\int_a^1 \frac{1}{s}ds+ \int_a^1 \frac{e^{-2s}-1}{s}ds +\int_1^\infty \frac{e^{-2s}}{s}ds\\
&= \log \frac{1}{a} + \int_a^1 \frac{e^{-2s}-1}{s}ds +\int_1^\infty \frac{e^{-2s}}{s}ds \\
&= \log \frac{1}{a} + O(1).
\end{align*}
Hence \begin{align*}1-A \sim \frac{1}{\log\frac{1}{a}}. \end{align*}
Combining this identity with \eqref{eqn:atomic_gamma} yields the result.
\end{proof}

We are now ready to prove the implication (iii) $\Rightarrow$ (i) in Theorem \ref{thm:SS_intro}.

\begin{proposition}
  \label{prop:SS_singular_suff}
  Let $\{a_n\}$ be a sequence of positive numbers such that
  \begin{equation*}
    \sum_n \frac{1}{\log( 1 +  \frac{1}{a_n})} < \infty
  \end{equation*}
  and let $\{\lambda_n\}$ be a sequence in $\mathbb{T}$. Then the singular inner function $S_\mu$, where
  \begin{equation*}
    \mu = \sum_n a_n \delta_{\lambda_n},
  \end{equation*}
  is a factor of $\mathcal{D}$.
\end{proposition}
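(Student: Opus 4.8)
The plan is to realize $S_\mu$ as an infinite product of one-atom singular inner functions and then invoke Lemma \ref{lem:infinite_products}. Setting $S_n(z) = \exp\bigl(-a_n \frac{\lambda_n + z}{\lambda_n - z}\bigr)$, the defining formula for $S_\mu$ factors as $S_\mu = \prod_n S_n$, since with $\mu = \sum_n a_n \delta_{\lambda_n}$ the exponent $-\int_{\mathbb{T}} \frac{w+z}{w-z}\,d\mu(w)$ splits as $-\sum_n a_n \frac{\lambda_n + z}{\lambda_n - z}$. Each factor satisfies $S_n(0) = e^{-a_n} > 0$, so the normalization hypothesis $S_n(0) > 0$ of Lemma \ref{lem:infinite_products} holds without any unimodular adjustment, and each $S_n$ is an inner factor of $\mathcal{D}$ by the discussion preceding Theorem \ref{ExtremalFunction}.

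It then remains to verify the summability condition $\sum_n \bigl(1 - \gamma(S_n H^2 \cap \mathcal{D})\bigr) < \infty$ required by Lemma \ref{lem:infinite_products}. First I would note that the hypothesis $\sum_n \frac{1}{\log(1 + 1/a_n)} < \infty$ forces $\frac{1}{\log(1 + 1/a_n)} \to 0$, hence $a_n \to 0$; this is exactly what permits the use of the asymptotic in Corollary \ref{cor:atomic_gamma}, which gives $1 - \gamma(S_n H^2 \cap \mathcal{D}) \sim \frac{1}{2 \log(1/a_n)}$ as $a_n \to 0$.

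The final step is a routine comparison of the two series. Since $\log(1 + 1/a_n) \sim \log(1/a_n)$ as $a_n \to 0$, the quantity $1 - \gamma(S_n H^2 \cap \mathcal{D})$ is, for all large $n$, comparable to $\frac{1}{2 \log(1 + 1/a_n)}$; the finitely many indices with $a_n$ bounded away from $0$ contribute only a finite amount because $1 - \gamma \le 1$ always. Hence $\sum_n \bigl(1 - \gamma(S_n H^2 \cap \mathcal{D})\bigr)$ converges precisely because $\sum_n \frac{1}{\log(1 + 1/a_n)}$ does, and Lemma \ref{lem:infinite_products} then yields that $S_\mu$ is an inner factor of $\mathcal{D}$.

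I expect the only delicate point to be the bookkeeping in this last comparison: one must ensure the asymptotic equivalence is uniform enough to transfer convergence between the series, and keep in mind that the ``$\sim$'' of Corollary \ref{cor:atomic_gamma} is valid only in the limit $a_n \to 0$, so the large-$a_n$ terms are handled separately. All of the genuine analytic content --- the identification of the extremal function and the evaluation of $\gamma(S_a H^2 \cap \mathcal{D})$ --- has already been supplied by Theorem \ref{ExtremalFunction} and Corollary \ref{cor:atomic_gamma}.
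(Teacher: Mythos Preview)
Your proof is correct and follows essentially the same approach as the paper: factor $S_\mu$ into its atomic pieces, use Corollary \ref{cor:atomic_gamma} for the asymptotic of $1-\gamma$, and apply Lemma \ref{lem:infinite_products}. The only cosmetic difference is that the paper disposes of the finitely many large $a_n$ by simply assuming $a_n\in(0,\tfrac12)$ for all $n$, whereas you handle them directly via the trivial bound $1-\gamma\le 1$.
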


\begin{proof}
  Let $\mu_n = a_n \delta_{\lambda_n}$.
  The assumption implies that $a_n \to 0$, so we may assume that $a_n \in (0,\frac{1}{2})$ for all $n$. Then by Corollary \ref{cor:atomic_gamma},
  \begin{equation*}
    1 - \gamma(S_{\mu_n} H^2 \cap \mathcal{D}) \sim \frac{1}{2 \log \frac{1}{a_n}},
  \end{equation*}
  and this quantity is summable by our assumption.
  Thus, $S_\mu$ is an inner factor of $\mathcal{D}$ by Lemma \ref{lem:infinite_products}.
\end{proof}

The other implications of Theorem \ref{thm:SS_intro} and sharpness of Proposition \ref{prop:SS_singular_suff} follow
from sharpness of the Shapiro--Shields condition for zero sets, Corollary \ref{cor:boundary_implies_zero}
and Theorem \ref{thm:zero_subset}.
Since the implication (i)$\Rightarrow$(ii) is trivial the following result concludes the proof of Theorem \ref{thm:SS_intro}.

\begin{proposition}
  Let $\{a_n\}$ be a summable sequence of positive numbers such that
  \begin{equation*}
    \sum_n \frac{1}{\log(1 + \frac{1}{a_n})} = \infty.
  \end{equation*}
  Then there exists a sequence $\{\lambda_n\}\to 1$ in $\mathbb{T}$ such that defining
  $\mu = \sum_n a_n \delta_{\lambda_n}$, the singular inner function $S_\mu$ is not a factor in the Dirichlet space.
\end{proposition}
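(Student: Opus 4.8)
The plan is to derive this failure direction from the sharpness of the classical Shapiro--Shields condition for zero sets (the Nagel--Rudin--Shapiro theorem recalled in the second remark of Section 2), combined with Corollary \ref{cor:boundary_implies_zero} and Theorem \ref{thm:zero_subset}. The first step is to convert the hypothesis into a radial statement. Since $\{a_n\}$ is summable we have $a_n \to 0$, so all but finitely many $a_n$ lie in $(0,1)$, and for those $\log(1+\frac{1}{a_n}) = \log\frac{1}{a_n} + O(a_n)$; hence $\frac{1}{\log(1+1/a_n)} \sim \frac{1}{\log(1/a_n)}$ as $n \to \infty$. Discarding the finitely many indices with $a_n \ge 1$ (they affect neither divergence nor the final conclusion), the hypothesis gives $\sum_n \frac{1}{\log\frac{1}{a_n}} = \infty$, and setting $r_n = 1-a_n \in (0,1)$ this becomes $\sum_n \frac{1}{\log\frac{1}{1-r_n}} = \infty$.

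I would then invoke the Nagel--Rudin--Shapiro theorem to obtain a choice of arguments, i.e.\ a sequence $\{z_n\}$ with $|z_n| = r_n$, that is a set of uniqueness for $\mathcal{D}$. Because $|z_n| = 1 - a_n \to 1$, the sequence $\{z_n\}$ has no accumulation point in $\mathbb{D}$, so Theorem \ref{thm:zero_subset} applies and produces a point $w \in \mathbb{T}$ and a subsequence $\{z_{n_k}\}$ with $z_{n_k} \to w$ that is still a set of uniqueness. Rotating by $\overline{w}$ (which preserves both $\mathcal{D}$ and the uniqueness property) I may assume $w = 1$, and I write $z_{n_k} = r_{n_k}\lambda_{n_k}$ with $\lambda_{n_k} \in \mathbb{T}$, so that $\lambda_{n_k} \to 1$.

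Now $\{r_{n_k}\lambda_{n_k}\}$ is a Blaschke sequence, since $\sum_k (1-r_{n_k}) = \sum_k a_{n_k} \le \sum_n a_n < \infty$, and it is not a zero sequence. Hence Corollary \ref{cor:boundary_implies_zero} shows that $S_{\mu_0}$ is not a factor of $\mathcal{D}$, where $\mu_0 = \sum_k a_{n_k}\delta_{\lambda_{n_k}}$. To finish, I would set $\lambda_n = \lambda_{n_k}$ when $n = n_k$ and $\lambda_n = 1$ otherwise; then $\lambda_n \to 1$ and $\mu = \sum_n a_n\delta_{\lambda_n} \ge \mu_0$. By the monotonicity of singular inner factors under enlarging the measure (a direct consequence of Carleson's formula, as noted in the remark following Theorem \ref{thm:pushing_out}), the fact that $S_{\mu_0}$ is not a factor forces $S_\mu$ not to be a factor either.

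The main obstacle is precisely the requirement that $\{\lambda_n\}$ converge to $1$: the Nagel--Rudin--Shapiro construction only controls the moduli $|z_n|$ and may yield arguments spread around the circle. The key device for overcoming this is Theorem \ref{thm:zero_subset}, which extracts a uniqueness subsequence clustering at a single boundary point; a rotation then moves that point to $1$. The secondary point to get right is that passing to a subsequence discards some of the $a_n$; this is harmless because enlarging the measure can only destroy (never create) the factor property, so the discarded atoms may be placed arbitrarily, e.g.\ all at $1$, without affecting the conclusion while keeping $\lambda_n \to 1$.
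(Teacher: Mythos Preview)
Your proof is correct and takes essentially the same approach as the paper: Nagel--Rudin--Shapiro to produce a uniqueness set with the prescribed moduli, Theorem \ref{thm:zero_subset} plus a rotation to force convergence to $1$, Corollary \ref{cor:boundary_implies_zero} for the resulting subsequence, and then filling in the remaining atoms at $1$. The only cosmetic difference is that the paper invokes Corollary \ref{cor:inner_factor_product} in the final step, whereas you use the equivalent monotonicity observation from Carleson's formula.
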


\begin{proof}
  We may assume that $a_n \in (0,1)$ for all $n$. Let $r_n = 1 - a_n$.
  Then by the Rudin--Nagel--Shapiro result \cite{NRS82} (see also \cite[Theorem 4.2.2]{EKM+14}), there exist $w_n \in \mathbb{T}$ such that $\{r_n w_n\}$ is not a zero set for $\mathcal{D}$.
  Here, some of the $w_n$'s are allowed to be the same.
  Theorem \ref{thm:zero_subset} shows that there is a subsequence $\{r_{n_j} w_{n_j}\}$ that is not a zero set for $\mathcal{D}$, and such that $\{r_{n_j} w_{n_j} \}$ converges to some $\lambda\in \T$. By the rotational invariance of $\mathcal D$ we may assume $\{r_{n_j} w_{n_j} \}\to 1$. Then Corollary \ref{cor:boundary_implies_zero} shows that $S_{\mu_1}$ is not a factor of $\mathcal D$, where $\mu_1=\sum_{j=1}^\infty a_{n_j} \delta_{w_{n_j}}$. Finally, set $\lambda_{n_j}=w_{n_j}$ and use any sequence in $\T$ that converges to 1 to define $\mu_2 = \sum_{k=1, k\notin \{n_j\}}^\infty a_{k} \delta_{\lambda_{k}}$. Then by Corollary \ref{cor:inner_factor_product} $S_{\mu_1+\mu_2}$ cannot be a factor of $\mathcal D$.
\end{proof}

\section{Sufficient conditions for zero sets}

The results of the previous Section imply results about zero sets.  We start with a more precise version of Theorem \ref{thm:zero_decomp_intro}.

\begin{theorem}
  \label{thm:zero_decomp}
  Let $Z$ be a sequence in $\mathbb{D}$. Suppose that $Z$ can be partitioned as $Z = \bigcup_{k} Z_k$
  and that there exist $\lambda_k \in \mathbb{T}$ such that
  setting
  \begin{equation*}
    a_k = 2 \sum_{z \in Z_k} \frac{|\lambda_k - z|^2}{1 - |z|^2},
  \end{equation*}
  we have
  \begin{equation*}
    \sum_{k} \frac{1}{\log(1 + \frac{1}{a_k})} < \infty.
  \end{equation*}
  Then $Z$ is a zero set for the Dirichlet space.
\end{theorem}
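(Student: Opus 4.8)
The plan is to obtain this theorem as a direct consequence of two results already established: the Shapiro--Shields type sufficient condition for atomic singular inner factors (Proposition \ref{prop:SS_singular_suff}) and the transference principle from singular inner factors to zero sets (Theorem \ref{thm:atomic-zero}). The numbers $a_k$ and the points $\lambda_k$ are defined here to match exactly the data entering Theorem \ref{thm:atomic-zero}, so the task reduces to verifying the hypotheses of that theorem. Accordingly, I set $\mu = \sum_k a_k \delta_{\lambda_k}$.

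First I would check the summability $\sum_k a_k < \infty$ required by Theorem \ref{thm:atomic-zero}. The hypothesis $\sum_k \frac{1}{\log(1 + 1/a_k)} < \infty$ forces $\frac{1}{\log(1 + 1/a_k)} \to 0$, whence $a_k \to 0$. Since $a \log(1 + 1/a) \to 0$ as $a \to 0^+$, we have $a_k \le \frac{1}{\log(1 + 1/a_k)}$ for all sufficiently large $k$, and $\sum_k a_k < \infty$ follows by comparison.

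With this in hand, Proposition \ref{prop:SS_singular_suff} applies verbatim to the sequence $\{a_k\}$ and the points $\{\lambda_k\} \subset \mathbb{T}$ and shows that $S_\mu$ is an inner factor of $\mathcal{D}$. All hypotheses of Theorem \ref{thm:atomic-zero} are now verified for the given partition $Z = \bigcup_k Z_k$, and that theorem yields that $Z$ is a zero set for the Dirichlet space. Since both ingredients are already proved, there is no genuine obstacle to surmount: the theorem is precisely the composition of the singular-inner-factor side (Proposition \ref{prop:SS_singular_suff}) with the zero-set side (Theorem \ref{thm:atomic-zero}), the only point requiring a line of argument being the elementary comparison giving $\sum_k a_k < \infty$.
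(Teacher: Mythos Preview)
Your proposal is correct and follows essentially the same route as the paper: establish $\sum_k a_k<\infty$ by comparison with $\frac{1}{\log(1+1/a_k)}$, then apply Proposition~\ref{prop:SS_singular_suff} (equivalently, the implication (iii)$\Rightarrow$(i) of Theorem~\ref{thm:SS_intro}) followed by Theorem~\ref{thm:atomic-zero}. The paper's proof is identical in structure, the only cosmetic difference being that it asserts the inequality $a_k\le \frac{1}{\log(1+1/a_k)}$ directly for all $a_k>0$ rather than first arguing that $a_k\to 0$.
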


\begin{proof}
  Since $a_k \le \frac{1}{\log(1 + \frac{1}{a_k})}$ the assumption implies that $\{a_k\}$ is summable. Thus, the assumption $ \sum_{k} \frac{1}{\log(1 + \frac{1}{a_k})} < \infty$ implies the Theorem by Theorems \ref{thm:SS_intro} and \ref{thm:atomic-zero}.
\end{proof}

Theorem \ref{thm:zero_decomp_intro} is now a consequence of the last result.

\begin{proof}[Proof of Theorem \ref{thm:zero_decomp_intro}]
  Let $b_k = 2 K(k)^2 \sum_{z \in Z_k} (1 - |z|)$ denote the quantity in the statement of Theorem \ref{thm:zero_decomp_intro}. Then the quantity in Theorem \ref{thm:zero_decomp} satisfies
  \begin{equation*}
    a_k = 2 \sum_{z \in Z_k} \frac{|\lambda_k - z|^2}{1 - |z|^2}
    \le 2 K(k)^2 \sum_{z \in Z_k}  \frac{( 1 - |z|)^2}{1 - |z|^2}
    \le b_k.
  \end{equation*}
  Thus, Theorem \ref{thm:zero_decomp} shows that $Z$ is a zero set.
\end{proof}

Theorem \ref{thm:zero_decomp} can be improved slightly by using the fact that moving
zeros away from the origin preserves zero sets; see Theorem \ref{thm:pushing_out}.
The idea is that $\frac{|\lambda_k - z|^2}{1 - |z|^2}$ can sometimes be decreased by replacing $z$ with a point that lies on the same radius as $z$. For instance, the new point could lie in a nontangential approach region; see Figure \ref{fig:push_Stolz}.

\begin{figure}[ht]
\begin{tikzpicture}[scale=0.8, line cap=round, line join=round]
\def\R{3}          
\def\K{1.3}        
\def\alpha{16}     

\pgfmathdeclarefunction{rstolz}{1}{%
  \pgfmathparse{ ((\K*\K) - cos(#1) - sqrt(((\K*\K) - cos(#1))^2 - ((\K*\K) - 1)^2))
                 / ((\K*\K) - 1) }%
}

\draw[line width=0.8pt, blue] (0,0) circle (\R);

\fill[gray!40, even odd rule]
  plot[domain=-180:180, smooth, samples=360, variable=\t]
    ({\R * rstolz(\t) * cos(\t)}, {\R * rstolz(\t) * sin(\t)}) -- cycle;

\draw[line width=0.9pt, red!70!black]
  plot[domain=-180:180, smooth, samples=360, variable=\t]
    ({\R * rstolz(\t) * cos(\t)}, {\R * rstolz(\t) * sin(\t)}) -- cycle;

\coordinate (Bdry) at ({\R*cos(\alpha)}, {\R*sin(\alpha)});
\draw[thin] (0,0) -- (Bdry);

\coordinate (w) at ({\R*rstolz(\alpha)*cos(\alpha)}, {\R*rstolz(\alpha)*sin(\alpha)});
\coordinate (z) at ({\R*((rstolz(\alpha)+1)/2)*cos(\alpha)}, {\R*((rstolz(\alpha)+1)/2)*sin(\alpha)});

\fill (w) circle (0.06);
\fill (z) circle (0.06);
\node[below] at (w) {$w_n$};
\node[above] at (z) {$z_n$};

\end{tikzpicture}
  \caption{}
  \label{fig:push_Stolz}
\end{figure}


More precisely, in the setting of Theorem \ref{thm:zero_decomp}, let $r_z \in [0,1]$ for each $z \in Z$ and
consider
\begin{equation*}
  \widetilde{a}_k = 2 \sum_{z \in Z_k} \frac{|\lambda_k - r_z z|^2}{1 - r_z^2 |z|^2}.
\end{equation*}
If
\begin{equation*}
  \sum_k \frac{1}{\log(1 + \widetilde{a}_k^{-1})} < \infty,
\end{equation*}
then Theorem \ref{thm:zero_decomp} shows that $\{r_z z: z \in Z \}$ is a zero set,
and hence $Z$ is a zero set by Theorem \ref{thm:pushing_out}.
The optimal choice of $r_z$ amounts to solving an optimization problem.
The following lemma gives the solution in the case $\lambda_k = 1$;  the general case is obtained
by applying a rotation.

\begin{lemma} \label{lem:minOfg} For $t\in [-\frac{\pi}{2},\frac{\pi}{2}]$ define $g(r)=\frac{|1-re^{it}|^2}{1-r^2}= \frac{1-2r\cos(t)+r^2}{1-r^2}$. Then $g$ is decreasing in $[0,\frac{\cos t}{1+|\sin t|})$ and increasing in $(\frac{\cos t}{1+|\sin t|},1)$ and  $$\min\{g(r):r\in [0,1)\}=|\sin t|.$$
\end{lemma}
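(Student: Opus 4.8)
The plan is to prove this via elementary single-variable calculus. We want to minimize
\[
  g(r) = \frac{1 - 2r\cos t + r^2}{1 - r^2}
\]
over $r \in [0,1)$, for a fixed $t \in [-\frac{\pi}{2}, \frac{\pi}{2}]$. First I would note that by symmetry $g$ depends on $t$ only through $\cos t = \sqrt{1 - \sin^2 t}$ and $|\sin t|$, so replacing $t$ by $|t|$ changes nothing; hence I may assume $t \in [0, \frac{\pi}{2}]$ and write $c = \cos t \ge 0$, $s = \sin t \ge 0$. The boundary behavior is clear: $g(0) = 1$ and $g(r) \to +\infty$ as $r \to 1^-$, so any critical point in the open interval is a candidate for a minimum.

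Next I would compute the derivative. Using the quotient rule on $g(r) = \frac{r^2 - 2cr + 1}{1 - r^2}$, the numerator of $g'(r)$ is
\[
  (2r - 2c)(1 - r^2) - (r^2 - 2cr + 1)(-2r),
\]
which simplifies after expansion to $-2c r^2 + 4r - 2c = -2c(r^2 + 1) + 4r$. Setting this equal to zero gives the quadratic $c r^2 - 2r + c = 0$, whose roots are $r = \frac{1 \pm \sqrt{1 - c^2}}{c} = \frac{1 \pm s}{c}$ (when $c > 0$). The root $\frac{1+s}{c}$ exceeds $1$ and is discarded, while $r_0 := \frac{1 - s}{c}$ lies in $[0,1)$. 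I would then rationalize: $r_0 = \frac{1-s}{c} = \frac{(1-s)c}{c^2} = \frac{(1-s)c}{1 - s^2} = \frac{c}{1+s}$, which is exactly the claimed critical point $\frac{\cos t}{1 + |\sin t|}$. Since the numerator of $g'$ is a downward-opening parabola in $r$ (coefficient $-2c < 0$) that is negative at $r=0$ (value $-2c$) wait—I should double check sign conventions, but the upshot is that $g'$ changes sign from negative to positive at $r_0$, giving the claimed decreasing-then-increasing behavior and confirming $r_0$ is the global minimum on $[0,1)$.

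Finally I would evaluate $g(r_0)$. Substituting $r_0 = \frac{c}{1+s}$ is the main computational step; I expect the cleanest route is to use the relation $c r_0^2 + c = 2 r_0$ (from the critical-point equation, valid when $c>0$) to simplify the numerator $r_0^2 - 2 c r_0 + 1$. Indeed $r_0^2 + 1 = \frac{2 r_0}{c}$, so the numerator equals $\frac{2 r_0}{c} - 2 c r_0 = 2 r_0 \frac{1 - c^2}{c} = 2 r_0 \frac{s^2}{c}$, while the denominator $1 - r_0^2 = 2 r_0 \frac{s^2}{c} \cdot \frac{1}{\text{(something)}}$—rather than chase this, the efficient check is to compute both numerator and denominator directly at $r_0 = \frac{c}{1+s}$ and verify the ratio collapses to $s = |\sin t|$. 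The degenerate case $t = \frac{\pi}{2}$ (so $c = 0$) must be handled separately: there $g(r) = \frac{1 + r^2}{1 - r^2}$ is strictly increasing with minimum $g(0) = 1 = |\sin t|$, and $r_0 = \frac{\cos t}{1 + |\sin t|} = 0$, consistent with the formula. The main obstacle is purely algebraic bookkeeping in evaluating $g(r_0)$ and confirming the sign change of $g'$; there is no conceptual difficulty, so care with the computation is all that is required.
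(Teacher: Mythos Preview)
The paper omits the proof entirely (``We omit the elementary proof''), so there is nothing to compare against; your single-variable calculus approach is exactly the natural one and is correct in outline.

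Two small loose ends you flagged but did not close. First, the sign of $g'$: the numerator $-2c r^2 + 4r - 2c$ is a downward parabola (for $c>0$) with roots $r_0 = \frac{c}{1+s}$ and $r_1 = \frac{1+s}{c} \ge 1$, so it is negative on $[0,r_0)$ and positive on $(r_0,1) \subset (r_0,r_1)$; hence $g$ is decreasing then increasing, as claimed. Second, the value $g(r_0)$: from $c r_0^2 + c = 2r_0$ one gets $r_0^2 + 1 = 2r_0/c$, so the numerator $r_0^2 - 2cr_0 + 1 = \frac{2r_0 s^2}{c}$, while $1 - r_0^2 = 2 - \frac{2r_0}{c} = \frac{2(c - r_0)}{c} = \frac{2s}{1+s}$ after substituting $r_0 = \frac{c}{1+s}$; the ratio is $s = |\sin t|$. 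The boundary case $t=0$ (where $r_0 = 1$ and the infimum $0$ is not attained) is a minor quibble with the statement, not with your argument.
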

We omit the elementary proof.

In case $\lambda_k =1$, write $Z_k = \{z_1,z_2,\ldots\}$ and $z_n = |z_n| e^{i t_n}$,
and let
\begin{equation*}
 M= \Big\{n: t_n \in \Big(-\frac{\pi}{2},\frac{\pi}{2}\Big) \text{ and } |z_n| > \frac{\cos t_n}{1+|\sin t_n|} \Big\}
\end{equation*}
Then Lemma \ref{lem:minOfg} shows that
we can replace $a_k$ by the smaller quantity
\begin{equation*}
  \widetilde{a}_k = \sum_{n \notin M} 2 \frac{|1 - z_n|^2}{1 - |z_n|^2} + \sum_{n \in M} 2 | \sin t_n|.
\end{equation*}

Frostman shifts are another way of producing Blaschke products from singular inner functions.
We record the observation that the Frostman shift of a factor of $\mathcal{D}$ is again a factor of $\mathcal{D}$.
We are grateful to John M\textsuperscript{c}Carthy for raising this question.

\begin{proposition}
  Let $I$ be an inner function that is a factor of $\mathcal{D}$. Then for every biholomorphic automorphism $\theta$ of $\mathbb{D}$,
  the inner function $\theta \circ I$ is a factor of $\mathcal{D}$.
\end{proposition}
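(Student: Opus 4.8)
The plan is to reduce the statement to a pointwise comparison of local Dirichlet integrals and then invoke Carleson's formula together with Proposition \ref{prop:inner_factor_char}. First I would record that $\theta \circ I$ is again an inner function: writing the automorphism as $\theta(w) = e^{i\alpha}\frac{w-a}{1-\overline a w}$ with $a \in \mathbb{D}$ and $\alpha \in \mathbb{R}$, the map $\theta$ extends continuously to $\overline{\mathbb{D}}$ and carries $\mathbb{T}$ onto $\mathbb{T}$; since $I$ has radial boundary values of modulus $1$ almost everywhere, so does $\theta \circ I$, and $\theta \circ I$ is bounded and holomorphic. Hence $\theta \circ I$ is inner (this is the standard fact underlying Frostman shifts). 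In particular $\theta \circ I \in H^2$, so its local Dirichlet integrals $D_z(\theta \circ I)$ are defined.

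The key step is a uniform bound on $\theta'$. From $\theta'(w) = e^{i\alpha}\frac{1-|a|^2}{(1-\overline a w)^2}$ and $|1-\overline a w| \ge 1 - |a|$ for $w \in \mathbb{D}$, we obtain $|\theta'(w)| \le \frac{1+|a|}{1-|a|} =: C$ for every $w \in \mathbb{D}$. Applying the chain rule $(\theta \circ I)' = (\theta' \circ I)\, I'$ then yields the pointwise estimate $|(\theta \circ I)'(\zeta)| \le C\,|I'(\zeta)|$ for all $\zeta \in \mathbb{D}$. Feeding this into the definition of the local Dirichlet integral and integrating against the positive measure $\frac{1-|\zeta|^2}{|\zeta - z|^2}\,dA(\zeta)$ gives, for every fixed $z \in \mathbb{T}$,
\begin{equation*}
  D_z(\theta \circ I) \le C^2\, D_z(I).
\end{equation*}

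It then remains to conclude. Since $I$ is a factor of $\mathcal{D}$, Proposition \ref{prop:inner_factor_char} furnishes a nonzero $g \in H^2$ with $I g \in \mathcal{D}$, and Carleson's formula gives $\int_{\mathbb{T}} D_z(I) |g(z)|^2 \frac{|dz|}{2\pi} < \infty$. The pointwise bound above then shows that $\int_{\mathbb{T}} D_z(\theta \circ I) |g(z)|^2 \frac{|dz|}{2\pi} \le C^2 \int_{\mathbb{T}} D_z(I) |g(z)|^2 \frac{|dz|}{2\pi} < \infty$, so another application of Carleson's formula, now to the inner function $\theta \circ I$ and the $H^2$ function $g$, yields $(\theta \circ I) g \in \mathcal{D}$. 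As $g \ne 0$, Proposition \ref{prop:inner_factor_char} shows that $\theta \circ I$ is a factor of $\mathcal{D}$. The crux of the argument is the elementary observation that an automorphism of $\mathbb{D}$ has derivative uniformly bounded on $\mathbb{D}$; once this is in hand the pointwise domination of local Dirichlet integrals is automatic, so I expect no genuine obstacle, only the verification that $\theta \circ I$ is indeed inner.
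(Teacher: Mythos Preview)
Your proof is correct, but it takes a genuinely different route from the paper's. The paper argues purely algebraically: writing $\theta(z) = \frac{z-a}{1-\overline a z}$ and choosing $g \in \mathcal{D}\setminus\{0\}$ with $Ig \in \mathcal{D}$, it simply observes that $(1-\overline a I)g \in \mathcal{D}$ is nonzero (as a linear combination of $g$ and $Ig$, and since $|\overline a I|<1$ on $\mathbb{D}$) and that
\[
  (\theta \circ I)\,(1-\overline a I)g = (I-a)g \in \mathcal{D},
\]
again as a linear combination of $Ig$ and $g$. That is the entire argument; no pointwise estimate on local Dirichlet integrals is needed.

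Your approach instead bounds $|\theta'|$ uniformly on $\mathbb{D}$, applies the chain rule to obtain $D_z(\theta\circ I)\le C^2 D_z(I)$ for every $z\in\mathbb{T}$, and then transfers finiteness through Carleson's formula. This is more analytic and slightly longer, but it buys you something the paper's proof does not: an explicit quantitative comparison of local Dirichlet integrals (and hence of the Dirichlet norms of $(\theta\circ I)g$ and $Ig$), with the constant depending only on $|a|$. The paper's proof is shorter and more elementary, using only that $\mathcal{D}$ is a vector space containing both $g$ and $Ig$; your proof is more informative if one cares about constants or wants to iterate the construction.
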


\begin{proof}
  We may assume that $\theta(z) = \frac{z - a}{1 - \overline{a}z}$ for some $a \in \mathbb{D}$.
  Let $g \in \mathcal{D} \setminus \{0\}$ such that $I g \in \mathcal{D}$. Then $(1 - \overline{a} I) g \in \mathcal{D}$ and
  \begin{equation*}
    (\theta \circ I) \cdot (1 - \overline{a} I) g = (I - a) g \in \mathcal{D},
  \end{equation*}
  so $\theta \circ I$ is a factor of $\mathcal{D}$.
\end{proof}

\section{Rate of convergence conditions}

Let $\{z_n\}$ be a sequence in $\mathbb{D}$ converging to $1$. It is natural to seek sufficient conditions for $\{z_n\}$ to be a zero set
for the Dirichlet space that only depend on the rate of convergence of $\{z_n\}$ to $1$; see e.g. \cite{MS09}.
The following result shows, roughly speaking, that any such sufficient condition
is necessarily implied by the Carleson set condition; see Theorem \ref{thm:CTWN} and the discussion following it.
The proof is inspired by that of \cite[Theorem 4.5.2]{EKM+14}.

\begin{theorem}
  \label{thm:rate_of_convergence}
  Let $\Omega \subset \mathbb{D}^{\mathbb{N}}$ be a set of sequences with the following two properties:
  \begin{enumerate}
    \item every sequence in $\Omega$ is a zero sequence for the Dirichlet space, and
    \item whenever $\{z_n\} \in \Omega$ and $\{w_n\} \in \mathbb{D}^{\mathbb{N}}$ satisfies
      $|1 - w_n| \le |1 - z_n|$ for all $n \in \mathbb{N}$, then $\{w_n\} \in \Omega$.
  \end{enumerate}
  Then for all $\{z_n\} \in \Omega$, the set
  \begin{equation}
    \label{eqn:proj_boundary}
    \overline{\Big\{ \frac{z_n}{|z_n|}: n \in \mathbb{N}, z_n \neq 0 \Big\}}
  \end{equation}
  is a Carleson set.
\end{theorem}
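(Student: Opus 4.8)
The plan is to argue by contraposition: assuming $\{z_n\} \in \Omega$ but that the set in \eqref{eqn:proj_boundary} is not a Carleson set, I will manufacture another sequence in $\Omega$ that is a set of uniqueness, contradicting hypothesis (1). Throughout write $\theta_n = z_n/|z_n|$ and $E = \overline{\{\theta_n : z_n \neq 0\}}$. The starting observation is the elementary \emph{budget bound} $|1 - z_n| \ge \tfrac{1}{2}|1 - \theta_n|$, obtained by minimizing $r \mapsto |1 - r\theta_n|$ over $r \in [0,1]$ (its minimum is $|\sin \arg \theta_n|$ when $\operatorname{Re}\theta_n \ge 0$, and at least $1$ otherwise), in the same spirit as Lemma \ref{lem:minOfg}. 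Hypothesis (2) says precisely that I may replace each $z_n$ by any $w_n$ with $|1 - w_n| \le |1 - z_n|$ and remain in $\Omega$.

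The first step is to show that hypothesis (2) forces $z_n \to 1$, so that $E$ accumulates only at the point $1$. Suppose instead that $\limsup_n |1 - z_n| = \beta > 0$. Fix an interior point $p \in \mathbb{D}$ with $|1 - p| < \beta$ and a subsequence $\{z_{n_k}\}$ with $|1 - z_{n_k}| \ge \beta$. Define $w_{n_k}$ to be distinct points of $\mathbb{D}$ tending to $p$ (admissible, since $|1 - w_{n_k}| < \beta \le |1 - z_{n_k}|$ once $w_{n_k}$ is close enough to $p$) and $w_n = z_n$ for the remaining indices. By (2), $\{w_n\} \in \Omega$; but $\{w_n\}$ has the interior accumulation point $p$, so any $f \in \mathcal{D}$ vanishing on it is identically zero, whence $\{w_n\}$ is a set of uniqueness and \emph{not} a zero set, contradicting (1). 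Thus $z_n \to 1$, $\theta_n \to 1$, and $E = \{\theta_n\} \cup \{1\}$; in particular any failure of the Carleson condition is localized at $1$.

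For the main step, suppose $E$ is not a Carleson set. The role of the budget bound is that, after rescaling arguments toward $1$ by a fixed factor $c < \tfrac{1}{2}$ — replacing $\theta_n = e^{i\phi_n}$ by $\theta_n' = e^{i c \phi_n}$ — the set $E' = \{\theta_n'\} \cup \{1\}$ is still not a Carleson set, because its complementary arcs are shortened by the factor $c$ and this preserves the divergence of $\sum_j \ell_j \log \tfrac{1}{\ell_j}$; meanwhile $|1 - \theta_n'| \le \tfrac{1}{2}|1 - \theta_n| \le |1 - z_n|$ for all large $n$, so I may place $w_n$ at argument $c\phi_n$ with \emph{any} modulus in an admissible interval $[\rho_n^-, 1)$ reaching up to the boundary. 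I then mimic the uniqueness construction in the proof of \cite[Theorem 4.5.2]{EKM+14}: over the short complementary arcs of $E'$ I put points at these arguments with moduli one minus the local arc length, obtaining (since $E'$ is not Carleson) a set of uniqueness $\{w_n\}$ for $\mathcal{D}$. As $\{w_n\}$ respects the budget, (2) gives $\{w_n\} \in \Omega$, contradicting (1), and the theorem follows.

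The hard part is this last step — carrying the non-Carleson uniqueness construction out \emph{within the budget}. The bound $|1 - z_n| \ge \tfrac{1}{2}|1 - \theta_n|$ is by itself too weak to place points near the boundary at the original arguments $\theta_n$; the rescaling toward $1$ is what creates the slack, and one must then verify that the moduli demanded by the construction (heights comparable to the local arc lengths $\ell_j$) actually lie in the admissible range $[\rho_n^-, 1)$ cut out by $|1 - w_n| \le |1 - z_n|$. Quantifying this — in particular checking that the admissible heights dominate the arc lengths along the tail that produces the divergence of $\sum_j \ell_j \log\tfrac{1}{\ell_j}$ — is the crux of the argument and the place where the adaptation of \cite[Theorem 4.5.2]{EKM+14} must be made careful.
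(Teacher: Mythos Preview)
Your outline is exactly the paper's argument: first force $z_n\to 1$ via property~(2), then rescale the arguments toward $1$ by a fixed factor, and place the new points at height equal to the rescaled complementary arc length so that \cite[Theorem 4.1.6]{EKM+14} applies. The ``hard part'' you flag is in fact a two-line estimate. With $c=1/(2\pi)$ (the paper's choice), writing $\phi_n=\arg z_n$, $\tau_n=c\phi_n$, $\delta_n=\tau_n-\tau_{n+1}$, and $w_n=(1-\delta_n)e^{i\tau_n}$, one has
\[
|1-w_n|\ \le\ |1-e^{i\tau_n}|+\delta_n\ \le\ 2\tau_n\ =\ \frac{\phi_n}{\pi}\ \le\ \tfrac12\,|1-e^{i\phi_n}|\ \le\ |1-z_n|,
\]
the last step being your budget bound. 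So no further ``quantifying'' is needed.

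One small correction: your condition $c<\tfrac12$ only controls $|1-\theta_n'|$, not $|1-w_n|$; once the height deficit $\delta_n'$ is added you need roughly $2c\phi_n\le |1-z_n|$, and the budget bound only gives $|1-z_n|\ge\sin(\phi_n/2)$, so $c<\tfrac12$ is too generous. Taking $c=1/(2\pi)$ (or any $c\le 1/(2\pi)$, using $|1-e^{i\phi}|\ge 2\phi/\pi$) makes the chain above go through uniformly for $\phi_n\in(0,1)$, and the rescaling by a fixed constant preserves the divergence of $\sum\delta_n\log(1/\delta_n)$, as you noted.
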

Theorem \ref{thm:CTWN} then implies that all sequences in $\Omega$ must be zero sets for $A^\infty$.
\begin{proof}
  We first show that every sequence $\{z_n\}$ in $\Omega$ converges to $1$. Suppose otherwise.
  Then there exist a subsequence $(z_{n_k})$ and $\varepsilon \in (0,1)$ with $|1 - z_{n_k}| \ge \varepsilon$
  for all $k$. We may find a sequence $\{w_n\}$ such that
  \begin{itemize}
    \item $w_{n_k} \in [0,1)$ for all $k$,
    \item $|1 - w_{n_k}| \ge \varepsilon$ for all $k$, and
    \item $|1 - w_{n}| \le |1 - z_n|$ for all $n \in \mathbb{N}$.
  \end{itemize}
  (For instance, choose $w_{n_k} = 1 - \min(1,|1 - z_{n_k}|)$ and $w_n = z_n$ for all remaining indices $n$.)
  By the second property of $\Omega$, we have $\{w_n\} \in \Omega$. But $|w_{n_k}| \le 1 - \varepsilon$ for all
  $k \in\mathbb{N}$, so $\{w_n\}$ cannot be a zero sequence for $\mathcal{D}$. This contradicts the first property of $\Omega$.
  Thus, every sequence in $\Omega$ converges to $1$.

  We now show the main statement. Assume towards a contradiction that $\{z_n\} \in \Omega$,
  but that the set in \eqref{eqn:proj_boundary} is not a Carleson set.
  We will obtain our contradiction by constructing a uniqueness sequence $\{w_n\}$ for the Dirichlet space
  with $|1 - w_n| \le |1 -z_n|$ for all $n \in \mathbb{N}$.

  Write $z_n = r_n e^{i \theta_n}$, where $r_n \to 1$ and $\theta_n \to 0$ by the first paragraph.
  Since the union of two Carleson sets is a Carleson set, we may assume by passing to a subsequence
  that either $\theta_n \ge 0$ or $\theta_n \le 0$ for all $n \in \mathbb{N}$. By replacing $z_n$ with $\overline{z_n}$ if necessary, we may further assume that $\theta_n \ge 0$ for all $n \in \mathbb{N}$. Furthermore, by possibly reordering the terms of the sequence we may suppose that $0<\theta_{n+1} \le \theta_n$ for all $n$.
  Finally, by passing to another subsequence, we may achieve that $0 < \theta_{n+1} < \theta_n < 1$ for all $n \in \mathbb{N}$. (Passing to a subsequence is allowed since we simply set $w_n = z_n$ for all points not in the subsequence, and deleting repeated $\theta_n$'s does not affect the set defined in (\ref{eqn:proj_boundary}.)

  Now, let $\tau_n = \frac{\theta_n}{2 \pi}$, $\delta_n = \tau_n - \tau_{n+1}$, $s_n = 1 - \delta_n$
  and $w_n = s_n e^{i \tau_n}$.
  Then
  \begin{align*}
    |1 -w_n| \le |1 - e^{i \tau_n}| + (1 - s_n) \le 2 \tau_n = \frac{\theta_n}{\pi} &\le \frac{1}{2} | 1 - e^{i \theta_n}| \\
                                                                                    &\le | 1 - r_n e^{ i \theta_n}| \\
                                                                                    &= |1 - z_n|.
  \end{align*}
To see that $\{w_n\}$ is a uniqueness set, note that since $\overline{ \{e^{ i \theta_n}: n \in \mathbb{N} \}}$
is not a Carleson set, we have with $\varepsilon_n = \theta_n - \theta_{n+1}$ that
\begin{equation*}
  \sum_n \varepsilon_n \log \Big( \frac{1}{\varepsilon_n} \Big) = \infty.
\end{equation*}
Here we have used the well-known fact that closed sets of measure 0 are Carleson sets, if and only if they have finite entropy, i.e. $\sum_n |I_n|\log \frac{1}{|I_n|}<\infty$, where $I_n$ are the complementary intervals of the set and $|I_n|$ denotes their normalized length, see \cite{Beurling} or \cite[Exercise 4.4.2]{EKM+14}.
Since $\delta_n = \frac{\varepsilon_n}{2 \pi}$, we conclude that
\begin{equation*}
  \sum_n \delta_n \log \Big( \frac{1}{\delta_n} \Big) = \infty.
\end{equation*}
In this setting, \cite[Theorem 4.1.6]{EKM+14} shows that $\{w_n\}$ is a uniqueness set, as desired.
\end{proof}

\section{Singular inner factors: Carleson sets}

We begin this section by proving a slightly more general version of Proposition \ref{prop:Carleson_singular_intro}.

\begin{proposition}
  The following are equivalent for a Borel set $E \subset \mathbb{T}$.
  \begin{enumerate}[label=\normalfont{(\roman*)}]
    \item Every singular inner function associated with a measure concentrated on $E$ is a factor of $\mathcal{D}$;
    \item $\overline{E}$ is a Carleson set.
  \end{enumerate}
\end{proposition}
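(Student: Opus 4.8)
The plan is to prove the two implications separately, using the Shapiro--Shields type sufficient condition (Theorem~\ref{thm:SS_intro} / Proposition~\ref{prop:SS_singular_suff}) together with Lemma~\ref{lem:infinite_products} for the hard direction, and the Taylor--Williams characterization for $A^\infty$ (Theorem~\ref{thm:TW_singular}) together with the inclusion $A^\infty \subset \mathcal{D}$ for the easy direction. Observe first that since a measure $\mu$ concentrated on $E$ is the same as a measure concentrated on $\overline{E}$ up to the boundary, and since $S_\mu$ only depends on $\mu$, we may as well replace $E$ by $\overline{E}$ throughout; so the two conditions really concern the compact set $\overline{E}$, and the statement reduces to the case where $E$ is compact, which is Proposition~\ref{prop:Carleson_singular_intro}.

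For the implication (ii)$\Rightarrow$(i), assume $\overline{E}$ is a Carleson set and let $\mu$ be any positive singular measure concentrated on $E$. Since $A^\infty \subset \mathcal{D}$, Theorem~\ref{thm:TW_singular} shows that $S_\mu$ is a factor of $A^\infty$, hence there is a nonzero $g \in A^\infty \subset \mathcal{D}$ with $S_\mu g \in A^\infty \subset \mathcal{D}$; by Proposition~\ref{prop:inner_factor_char} this makes $S_\mu$ a factor of $\mathcal{D}$. This direction is essentially immediate from the already-quoted results.

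For the converse (i)$\Rightarrow$(ii), I would argue by contraposition: suppose $\overline{E}$ is \emph{not} a Carleson set, and produce a singular measure concentrated on $E$ whose singular inner function fails to be a factor of $\mathcal{D}$. The natural candidate is an atomic measure $\mu = \sum_n a_n \delta_{\lambda_n}$ with $\{\lambda_n\} \subset E$, chosen so that the Shapiro--Shields divergence condition $\sum_n \frac{1}{\log(1 + 1/a_n)} = \infty$ holds, since Theorem~\ref{thm:SS_intro} tells us this condition is exactly the obstruction for atomic measures. The key geometric input is the entropy characterization of Carleson sets used in Theorem~\ref{thm:rate_of_convergence}: a closed measure-zero set fails to be a Carleson set precisely when $\sum_n |I_n| \log \frac{1}{|I_n|} = \infty$, where $\{I_n\}$ are the complementary arcs of $\overline{E}$. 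I would use the gaps (or a carefully selected subset of points of $E$ clustering near the endpoints of these arcs) to manufacture weights $a_n$ comparable to the arc lengths $|I_n|$, so that divergence of the entropy sum forces divergence of the Shapiro--Shields sum. One must also handle the case where $\overline{E}$ has positive Lebesgue measure (so it is trivially non-Carleson): here one should be able to support a suitable atomic or continuous singular measure and again violate the relevant sufficiency criterion.

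The main obstacle I anticipate is the converse direction, specifically the bookkeeping that ties the \emph{geometry} of $E$ (the complementary-arc entropy) to the \emph{arithmetic} Shapiro--Shields sum for an atomic measure supported \emph{inside} $E$ rather than at the gap endpoints. Theorem~\ref{thm:SS_intro} is stated for arbitrary locations $\{\lambda_n\}$, which is convenient, but one still needs that the points can be chosen within $E$ (not merely in $\overline{E}$ or at accumulation points) while keeping the weights large enough to force divergence; if $E$ is, say, a countable set accumulating at a Carleson set this requires care. A cleaner route to sidestep part of this difficulty may be to invoke the Taylor--Williams and Caughran results more directly: Caughran's theorem (quoted in the introduction) says a compact $E \subset \mathbb{T}$ fails to be Carleson exactly when some Blaschke sequence with arguments in $E$ fails to be a zero set for $\mathcal{D}$; combining this with Corollary~\ref{cor:boundary_implies_zero} immediately yields a non-factor singular inner function with atoms on $E$, which would complete (i)$\Rightarrow$(ii) with far less computation. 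I expect the cleanest proof to lean on exactly this combination of Corollary~\ref{cor:boundary_implies_zero} with the Caughran characterization.
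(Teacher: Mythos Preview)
Your direction (ii)$\Rightarrow$(i) is correct and matches the paper exactly. Your ``cleaner route'' for (i)$\Rightarrow$(ii), namely Caughran's characterization combined with Corollary~\ref{cor:boundary_implies_zero}, is also the paper's approach in spirit: the paper simply unpacks Caughran by citing \cite[Lemma~4.5.3 and Theorem~4.1.6]{EKM+14} directly, constructing a uniqueness set $\{(1-\varepsilon_n)e^{i\theta_n}\}$ from the entropy divergence and then applying Corollary~\ref{cor:boundary_implies_zero}. So the core mechanism is the same.

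There is, however, a genuine gap. Your opening claim that one may ``replace $E$ by $\overline{E}$'' is wrong. Condition (i) only asserts that measures concentrated on $E$ yield factors; it says nothing about measures carrying mass on $\overline{E}\setminus E$. Thus in the contrapositive you must build a non-factor $S_\mu$ with $\mu$ concentrated on $E$ itself, not merely on $\overline{E}$. Caughran's theorem used as a black box on the compact set $\overline{E}$ only delivers a Blaschke sequence with arguments in $\overline{E}$, so the resulting atomic measure need not sit on $E$. The paper closes this gap by first invoking \cite[Lemma~4.5.3]{EKM+14} (and the argument of \cite[Theorem~4.5.2]{EKM+14}) to pass to the case where $E$ is itself a sequence $(e^{i\theta_n})$ with $\theta_n$ strictly decreasing to $0$; the atoms of $\mu=\sum\varepsilon_n\delta_{e^{i\theta_n}}$ then lie in $E$ by construction. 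Your proposal needs this reduction step to be complete for general Borel $E$.

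Finally, you were right to abandon the first idea via Theorem~\ref{thm:SS_intro}: divergence of $\sum 1/\log(1+1/a_n)$ only guarantees that \emph{some} placement of atoms yields a non-factor, and gives no control on whether those atoms can be forced into $E$.
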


\begin{proof}
  (ii) $\Rightarrow$ (i) This follows from the theorem of Taylor and Williams mentioned in the introduction (Theorem \ref{thm:TW_singular}).

  (i) $\Rightarrow$ (ii)
  Suppose that $\overline{E}$ is not a Carleson set.
  Using \cite[Lemma 4.5.3]{EKM+14} and arguing as in the proof of \cite[Theorem 4.5.2]{EKM+14},
  it suffices to consider the case when $E$ is given by a sequence $(e^{i \theta_n})_n$,
  where $(\theta_n)$ is a strictly decreasing sequence in $(0,1)$ tending to $0$.
  Let $\varepsilon_n = \theta_n - \theta_{n+1}$. Then $\sum_{n=1}^\infty \varepsilon_n < \infty$.
  Since $\overline{E}$ is not Carleson, $\sum_{n=1}^\infty \varepsilon_n \log(\frac{1}{\varepsilon_n}) =\infty$.

  Let
  \begin{equation*}
    \mu = \sum_{n=1}^\infty \varepsilon_n \delta_{e^{i \theta_n}}.
  \end{equation*}
  We claim that $S_\mu$ is not factor of $\mathcal{D}$.
  Indeed, according to \cite[Theorem 4.1.6]{EKM+14},
  the conditions $\sum_{n=1}^\infty \varepsilon_n < \infty$ and $\sum_{n=1}^\infty \varepsilon_n \log( \frac{1}{\varepsilon_n}) = \infty$ imply that $\{(1 - \varepsilon_n) e^{i \theta_n}: n \in \mathbb{N} \}$ is a uniqueness set. Hence $S_\mu$ is not a factor of $\mathcal{D}$ by Corollary \ref{cor:boundary_implies_zero}.
\end{proof}

If $S_\mu$ is a singular inner factor of $\mathcal{D}$, then $\mu$ need not be concentrated
on a set whose closure is a Carleson set.
For instance, Theorem \ref{thm:SS_intro} shows that for every countable set $E \subset \mathbb{T}$,
there exists a singular measure $\mu$ such that each point of $E$ is an atom for $\mu$
and such that $S_\mu$ is a factor of $\mathcal{D}$. In particular,
it is possible that the support of $\mu$ is all of $\mathbb{T}$.

Nonetheless, we can still obtain some necessary conditions for $S_\mu$ to be a factor of $\mathcal{D}$.
It is known that if $\{z_n\}$ is a zero set for $\mathcal{D}$ and
\begin{equation*}
  V(z) =  \sum_n \frac{1 - |z_n|^2}{|z - z_n|^2},
\end{equation*}
then $\log^+ V \in L^1(\mathbb{T})$; see e.g. \cite[Theorem 4.1.5]{EKM+14}.
The following is the analogous condition for singular inner factors.
We argue as in the proof of \cite[Theorem 4.1.5]{EKM+14}.

\begin{lemma}
  \label{lem:singular_necessary}
  Let $\mu \in M^+(\mathbb{T})$ be a singular measure with associated singular inner function $S_\mu$.
  Let
  \begin{equation*}
    V_\mu(z) = \int_{\mathbb{T}} \frac{1}{|z- w|^2} \, d \mu(w).
  \end{equation*}
  If $S_\mu$ is a factor of $\mathcal{D}$, then
   $\log^+ V_\mu \in L^1(\mathbb{T})$.
\end{lemma}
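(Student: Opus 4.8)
The plan is to adapt the proof of the analogous statement for zero sets (\cite[Theorem 4.1.5]{EKM+14}), where the key object is the function $V(z)=\sum_n \frac{1-|z_n|^2}{|z-z_n|^2}$, which by \eqref{eqn:local_Diri_inner} is exactly the local Dirichlet integral $D_z(B)$ of the Blaschke product. In our singular setting, the corresponding object is $D_z(S_\mu)=\int_{\mathbb{T}} \frac{2}{|z-w|^2}\,d\mu(w)=2V_\mu(z)$, again by \eqref{eqn:local_Diri_inner}. So the statement $\log^+ V_\mu \in L^1(\mathbb{T})$ is equivalent to $\log^+ D_z(S_\mu)\in L^1(\mathbb{T})$, and the natural route is to extract integrability of $\log^+ D_z(S_\mu)$ from Carleson's formula.

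First I would use the hypothesis that $S_\mu$ is a factor of $\mathcal{D}$ to produce, via Proposition \ref{prop:inner_factor_char}, a nonzero $f\in\mathcal{D}$ with $S_\mu f\in\mathcal{D}$. Carleson's formula then gives
\begin{equation*}
  \|S_\mu f\|^2 = \int_{\mathbb{T}} D_z(S_\mu)\,|f(z)|^2\,\frac{|dz|}{2\pi} + \|f\|^2 < \infty,
\end{equation*}
so $D_z(S_\mu)\,|f(z)|^2$ is integrable on $\mathbb{T}$, i.e. $V_\mu |f|^2\in L^1(\mathbb{T})$. The goal is to upgrade this weighted integrability to the unweighted statement $\log^+ V_\mu\in L^1(\mathbb{T})$. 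The standard trick is that $f$, being a nonzero $H^2$ function, has $\log|f|\in L^1(\mathbb{T})$ (its boundary values are nonzero a.e. and satisfy $\int_{\mathbb{T}}\log|f|\,|dz|>-\infty$ since $f\in H^2\setminus\{0\}$). Writing $\log^+ V_\mu = \log^+(V_\mu |f|^2) - \log(|f|^2) + (\text{correction})$ and using the elementary inequality $\log^+(ab)\le \log^+ a + \log^+ b$, one estimates pointwise
\begin{equation*}
  \log^+ V_\mu \le \log^+\!\big(V_\mu |f|^2\big) + \log^+\!\big(|f|^{-2}\big) = \log^+\!\big(V_\mu|f|^2\big) + 2\log^-|f|.
\end{equation*}
The first term is integrable because $t\mapsto \log^+ t$ is dominated by $t$, so $\log^+(V_\mu|f|^2)\le V_\mu|f|^2\in L^1$; the second term is integrable because $\log|f|\in L^1(\mathbb{T})$ forces $\log^-|f|\in L^1(\mathbb{T})$. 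Summing the two gives $\log^+ V_\mu\in L^1(\mathbb{T})$.

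The main obstacle I anticipate is the second term: one must be sure that $\log^-|f|$ is genuinely integrable, which is exactly the content of the nonvanishing of the outer part of a nonzero $H^2$ function (equivalently $\log|f|\in L^1(\mathbb{T})$, a standard Hardy space fact). A minor subtlety is that $f$ may have its own inner factor; but since $|f|=|f_o|$ a.e. on $\mathbb{T}$ where $f_o$ is outer and $f_o\ne 0$, the estimate is unaffected, and in fact one could replace $f$ by its outer part from the start. Everything else is the routine pointwise inequality $\log^+(ab)\le\log^+a+\log^+b$ together with $\log^+ t\le t$, so I expect the proof to be short once the weighted $L^1$ bound from Carleson's formula is in hand.
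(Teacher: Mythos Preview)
Your proposal is correct and follows essentially the same approach as the paper's proof: obtain $V_\mu|f|^2\in L^1(\mathbb{T})$ from Carleson's formula, apply $\log^+ t\le t$, and then use $\log^+(ab)\le\log^+a+\log^+b$ together with $\log^+|f|^{-2}\in L^1(\mathbb{T})$ (since $f\in H^2\setminus\{0\}$) to conclude.
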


\begin{proof}
  Let $f \in H^2 \setminus \{0\}$ be such that $S_\mu f \in \mathcal{D}$.
  By Carleson's formula (see Section \ref{sec:prelim}),
  \begin{equation*}
    \int_{\mathbb{T}} \int_{\mathbb{T}} \frac{2}{|z - w|^2}  |f(z)|^2 \, d \mu(w) \, |d z| < \infty,
  \end{equation*}
  so using that $\log^+ t \le t$, we find that
  \begin{equation*}
    \int_{\mathbb{T}} \log^+( V_\mu(z) |f(z)|^2) |d z| < \infty.
  \end{equation*}
  On the other hand, $\int_{\mathbb{T}} \log^+ |f(z)|^{-2} |d z| < \infty$,
  so since $\log^+(a b) \le \log^+ a + \log^+ b$, the result follows.
\end{proof}

We now obtain the following necessary condition from results of Ivrii and Ahern--Clark.

\begin{theorem}
  Let $\mu$ be a singular measure such that $S_\mu$ is a factor of $\mathcal{D}$.
  Then $\mu$ is concentrated on a countable union of Carleson sets.
\end{theorem}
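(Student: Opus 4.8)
The plan is to combine the necessary condition of Lemma~\ref{lem:singular_necessary} with the cited results of Ivrii and Ahern--Clark, which relate the integrability of $V_\mu$ to the structure of the support of $\mu$. Since $S_\mu$ is a factor of $\mathcal{D}$, Lemma~\ref{lem:singular_necessary} gives $\log^+ V_\mu \in L^1(\mathbb{T})$, where $V_\mu(z) = \int_{\mathbb{T}} |z-w|^{-2} \, d\mu(w)$. The key point is that the radial (or nontangential) growth of $V_\mu$ as one approaches a boundary point $\zeta$ controls how the mass of $\mu$ accumulates near $\zeta$; an integrability condition on $\log^+ V_\mu$ should therefore force the support of $\mu$ to be small in a measure-theoretic sense outside a thin exceptional set.

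First I would recall precisely the results of Ahern--Clark and Ivrii. The Ahern--Clark theory characterizes, in terms of the measure $\mu$, the boundary points at which $V_\mu(\zeta)$ (equivalently the relevant derivative/growth functional of the singular inner function) is finite; Ivrii's work refines the structure of the set where this growth is controlled. The strategy is to apply these to deduce that $V_\mu(\zeta) < \infty$ for $|dz|$-almost every $\zeta \in \mathbb{T}$, which follows from $\log^+ V_\mu \in L^1$ since an $L^1$ function is finite almost everywhere. The main work is then to translate ``$V_\mu$ finite a.e.'' into the assertion that $\mu$ is concentrated on a countable union of Carleson sets.

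For this translation I would stratify the circle according to the local growth rate of $V_\mu$. Concretely, for each $k \in \mathbb{N}$ let $E_k = \{\zeta \in \mathbb{T} : V_\mu(\zeta) \le k\}$ or an analogous sublevel set adapted to the Ahern--Clark growth functional; since $V_\mu < \infty$ a.e., the union $\bigcup_k E_k$ exhausts $\mathbb{T}$ up to a null set. The remaining singular mass of $\mu$ lives on the complementary set where $V_\mu = \infty$, and the Ivrii/Ahern--Clark machinery is exactly what guarantees that each piece of $\mu$ cut out by a finite growth bound is supported on (the closure of) a set satisfying the entropy/Carleson condition. Writing $\mu = \sum_k \mu|_{E_k}$ (or a similar countable decomposition indexed by the strata) then exhibits $\mu$ as concentrated on a countable union of Carleson sets.

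The hard part will be the precise invocation of the Ahern--Clark and Ivrii results and verifying that the sublevel sets $E_k$ (or their closures) are genuinely Carleson sets rather than merely sets of finite measure. In particular, the delicate step is passing from a pointwise finiteness/growth bound on $V_\mu$ to the geometric Carleson (finite-entropy) condition on the support; this is where Ivrii's structural theorem does the real work, and I would expect the bulk of the argument to consist of correctly matching the growth functional appearing in Lemma~\ref{lem:singular_necessary} to the one controlled in the cited literature. Once that identification is made, the countable decomposition of $\mu$ along the strata is routine.
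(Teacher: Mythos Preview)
Your stratification by sublevel sets of $V_\mu$ does not decompose $\mu$ in any useful way: for a singular measure $\mu$ one has $V_\mu(\zeta)=\infty$ for $\mu$-a.e.\ $\zeta$. Indeed, since $\mu$ is singular, the symmetric derivative satisfies $\limsup_{r\to 0}\mu(I(\zeta,r))/|I(\zeta,r)|=\infty$ for $\mu$-a.e.\ $\zeta$, and for any such $\zeta$ the crude bound $V_\mu(\zeta)\ge r^{-2}\mu(I(\zeta,r))$ forces $V_\mu(\zeta)=\infty$. Thus $\mu(E_k)=0$ for every $k$, and the proposed decomposition $\mu=\sum_k \mu|_{E_k}$ is identically zero. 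The condition $\log^+ V_\mu\in L^1(\mathbb T)$ restricts the \emph{Lebesgue} measure of the set where $V_\mu$ is large, not the $\mu$-measure; the entire mass of $\mu$ sits on the set where $V_\mu=\infty$, so stratifying by finiteness of $V_\mu$ gives no handle on $\mu$ itself.

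The paper bypasses this by a different intermediate object. From $\log^+ V_\mu\in L^1$ (Lemma~\ref{lem:singular_necessary}) one invokes Ahern--Clark to conclude that $S_\mu'$ lies in the Nevanlinna class; this is the actual content of the Ahern--Clark result being cited, not a pointwise characterization of where $V_\mu$ is finite. Then Ivrii's structural theorem applies directly to inner functions whose derivative is in the Nevanlinna class and yields that the associated singular measure is concentrated on a countable union of Carleson sets. So the missing link in your outline is the passage through $S_\mu'\in N$; once you route the argument that way, no stratification of $\mu$ is needed at all.
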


\begin{proof}
  By Lemma \ref{lem:singular_necessary}, we have $\log^+ V_\mu \in L^1$. In this setting, \cite[Corollary 4]{AC74} shows
  that $S_\mu'$ belongs to the Nevanlinna class. Then the conclusion follows from \cite[Corollary 1.9]{Ivrii19}.
\end{proof}

\begin{remark}
  The condition in Lemma \ref{lem:singular_necessary} is not sufficient for $S_\mu$ to be a factor of $\mathcal{D}$. To see this, we first consider the analogous condition for zero sets mentioned before Lemma \ref{lem:singular_necessary}.
A result of Protas \cite{Protas73}, see also \cite[Theorem 7]{AC74}, shows that if $\sum_{n} (1 - |z_n|)^t < \infty$ for some $t \in (0,\frac{1}{2})$, then the Blaschke product $B$
with zeros $\{z_n\}$ satisifes $B' \in H^{1-t}$.
In particular, $B' \in N^+$, from which it follows that $\log^+ V \in L^1(\mathbb{T})$; see \cite[Corollary 4]{AC74}.
On the other hand, if $\sum_{n} \frac{1}{ \log ( \frac{1}{1 - |z_n|}) } = \infty$, then by the result of Nagel--Rudin--Shapiro mentioned in the introduction, we may choose the arguments of the $z_n$ in such a way that $\{z_n\}$ is a set
of uniqueness for $\mathcal{D}$.

Now, let $\{z_n\}$ be a sequence as in the preceding paragraph,
that is, a set of uniqueness with $\sum_n (1 - |z_n|)^t < \infty$ for some $t \in (0,\frac{1}{2})$.
Write $z_n = r_n e^{i t_n}$
and consider the singular measure
$\mu = \sum_{n} (1 - r_n) \delta_{e^{i t_n}}$. Since $\{z_n\}$ is a not a zero set,
Corollary \ref{cor:boundary_implies_zero}  implies that $S_\mu$ is not a factor.
On the other hand, since $\sum_{n} (1 - r_n)^t < \infty$, a result of Caughran and Shields \cite{CS69}
shows that $S_\mu' \in H^t$.
Indeed, a proof of this result can be based on \cite[Theorem 5]{AC74}; see also the discussion
at the end of \cite{Ahern79}.
So again by \cite[Corollary 4]{AC74},
we have $\log^+ V_\mu \in L^1(\mathbb{T})$.
\end{remark}

\section{New singular inner factors}

We know the following sufficient conditions for singular inner factors in the Dirichlet space
from Theorem \ref{thm:SS_intro} and Proposition \ref{prop:Carleson_singular_intro} :
\begin{itemize}
  \item If $E \subset \mathbb{T}$ is a Carleson set, then for every measure $\mu$ supported on $E$, the singular inner function
    $S_\mu$ is a factor in the Dirichlet space.
  \item If $(\lambda_n)$ is any sequence in $\mathbb{T}$ and $(a_n)$ is a sequence in $(0,\infty)$
    satisfying $\sum_n \frac{1}{\log ( 1+ \frac{1}{a_n})} < \infty$ and $\mu = \sum_{n} a_n \delta_{\lambda_n}$,
    then $S_\mu$ is a factor in the Dirichlet space.
\end{itemize}

Our goal is to construct singular inner factors that cannot be achieved by combining these two sufficient conditions.

In the sequel, we write

\begin{equation*}
  \gamma(\mu) = \gamma(S_\mu H^2 \cap \mathcal{D}).
\end{equation*}

\begin{lemma}
  \label{lem:Carleson_gamma_zero}
  Let $E \subset \mathbb{T}$ be a countable Carleson set. Let $\mu$ be a positive measure supported on $E$.
  Then $\lim_{a \searrow 0} \gamma(a \mu) = 1$.
\end{lemma}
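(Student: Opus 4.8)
The plan is to exploit the Carleson set hypothesis through the Taylor--Williams theorem combined with the quantitative machinery already developed in this paper. Since $E$ is a Carleson set, Theorem~\ref{thm:TW_singular} guarantees that $S_{a\mu}$ is a factor of $\mathcal{D}$ for every $a > 0$, so the extremal quantity $\gamma(a\mu)$ is well-defined and lies in $(0,1]$; the content of the lemma is the quantitative statement that it tends to $1$ as $a \searrow 0$. First I would observe that $S_{a\mu} \to 1$ locally uniformly on $\mathbb{D}$ as $a \searrow 0$, since the defining exponent $-a\int_{\mathbb{T}} \frac{w+z}{w-z}\,d\mu(w)$ tends to $0$ uniformly on compact subsets. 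This suggests that the invariant subspaces $S_{a\mu} H^2 \cap \mathcal{D}$ should converge upward to all of $\mathcal{D}$, forcing $\gamma(a\mu) \to \gamma(\mathcal{D}) = 1$.

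The natural tool is Lemma~\ref{lem:extremal_inner_convergence}, which gives convergence of the extremal quantities $\gamma(B_n)$ provided $B_n \to S$ locally uniformly together with the two local-Dirichlet-integral conditions: a uniform domination $D_z(B_n) \le c\, D_z(S)$ and pointwise convergence $D_z(B_n) \to D_z(S)$ on $\mathbb{T}$. Here I would take the role of the $B_n$ to be played by $S_{a\mu}$ as $a \searrow 0$ (passing to a sequence $a_n \searrow 0$ for the statement), and the limiting inner function $S$ should be the constant $1$. By \eqref{eqn:local_Diri_inner}, $D_z(S_{a\mu}) = a \int_{\mathbb{T}} \frac{2}{|z-w|^2}\,d\mu(w) = a\, V_{2\mu}$-type expression, which tends to $0$ pointwise on $\mathbb{T}$ as $a \searrow 0$ and is dominated by its value at any larger parameter. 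The hoped-for conclusion would then be $\gamma(a\mu) \to \gamma(1 \cdot H^2 \cap \mathcal{D}) = \gamma(\mathcal{D}) = 1$.

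The main obstacle is that Lemma~\ref{lem:extremal_inner_convergence} as stated requires the limit $S$ to satisfy $S(0) \ne 0$ and $SH^2 \cap \mathcal{D} \ne (0)$, and one must check the hypotheses carefully when $S = 1$: the limiting subspace is $1\cdot H^2 \cap \mathcal{D} = \mathcal{D}$ itself, whose extremal function is the constant $1$, so $\gamma = 1$. The domination hypothesis $D_z(B_n) \le c\, D_z(S)$ is problematic when $D_z(S) = D_z(1) = 0$, since no finite $c$ can dominate a positive quantity by zero. Thus I do not expect to apply the lemma with $S = 1$ directly. Instead, the cleaner route is a direct comparison argument: since $\gamma(a\mu)$ is the value at $0$ of a contractive multiplier $\varphi_a = S_{a\mu}\psi_a$ with $\|\varphi_a\| = 1$, I would produce an explicit test function witnessing $\gamma(a\mu)$ close to $1$. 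The countability of $E$ is the key extra hypothesis here beyond Proposition~\ref{prop:Carleson_singular_intro}, and it should allow an exhaustion $E = \bigcup_j \{\lambda_j\}$ reducing to finite atomic measures, where Corollary~\ref{cor:atomic_gamma} gives precise asymptotics $1 - \gamma(S_{a\mu_j}) \sim \frac{1}{2\log(1/a)}$.

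Concretely, I would argue as follows. Fix the test function $\psi_a = (1-z)^N S_{a\mu}^{-1}$-type construction is not available in general, so instead I would use the monotonicity and product structure: decompose $\mu = \mu_F + \mu_{F^c}$ where $\mu_F$ is the restriction to a large finite subset $F \subset E$ carrying most of the mass. By Corollary~\ref{cor:inner_factor_product}-style estimates and Lemma~\ref{lem:infinite_products}, $\gamma(a\mu) \ge \prod_j \gamma(a\mu_j)$ over the atoms, and Corollary~\ref{cor:atomic_gamma} controls each factor. The summability coming from $\sum_j \frac{1}{\log(1/(a\,\mu(\{\lambda_j\})))}$, which for fixed finitely many atoms and small $a$ is a finite sum each tending to $1$, would then force the product to $1$. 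For the infinite tail I would use that the total mass $\mu(E^c_{\text{finite}})$ can be made arbitrarily small and that small total mass gives $\gamma$ near $1$ via Carleson's formula directly. The technical heart is interchanging the $a \searrow 0$ limit with the infinite product over atoms, which I expect to handle through the uniform convergence estimate $|1 - \varphi_n(z)| \le \frac{1+r}{1-r}|1 - \varphi_n(0)|$ already recorded in the proof of Lemma~\ref{lem:infinite_products}.
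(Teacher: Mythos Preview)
Your atom-by-atom strategy has a genuine gap. Writing $\mu=\sum_j c_j\delta_{\lambda_j}$, the lower bound $\gamma(a\mu)\ge\prod_j\gamma(ac_j\delta_{\lambda_j})$ is only useful if $\sum_j(1-\gamma(ac_j\delta_{\lambda_j}))<\infty$, and by Corollary~\ref{cor:atomic_gamma} this sum is comparable to $\sum_j\frac{1}{\log(1/a)+\log(1/c_j)}$. Nothing in the hypotheses forces this to converge: for instance, if $c_j=1/(j\log^2 j)$ (summable) then the tail behaves like $\sum_j 1/\log j=\infty$ regardless of $a$. So the infinite product over atoms can diverge to zero and gives no information. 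Your fallback, splitting off a finite piece and claiming ``small total mass gives $\gamma$ near $1$ via Carleson's formula directly'' for the tail, is circular: that is precisely the content of the lemma applied to the tail measure, and Carleson's formula alone does not supply it (the constant function $1$, which is the only test function with $f(0)/\|f\|=1$, never satisfies $\int D_z(S_\nu)\,|dz|<\infty$).

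The paper takes a completely different route that does not attempt any quantitative product estimate. It observes that the subspaces $\mathcal{M}_a=S_{a\mu}H^2\cap\mathcal{D}$ increase as $a\searrow 0$, so it suffices to show their closed union is all of $\mathcal{D}$. Taylor--Williams produces a single outer $f\in A^\infty$ with boundary zero set contained in $E$ such that $S_\mu f\in\mathcal{D}$; Carleson's formula then gives $S_{a\mu}f\in\mathcal{D}$ with uniformly bounded norm, so $f$ lies in the closure. The countability hypothesis enters not through an atomic decomposition but through the Hedenmalm--Shields cyclicity theorem: an outer function in $\mathcal{D}\cap A(\mathbb{D})$ with countable boundary zero set is cyclic. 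This forces $\overline{\bigcup_a\mathcal{M}_a}=\mathcal{D}$, whence $P_{\mathcal{M}_a}\to I$ in SOT and $\gamma(a\mu)\to 1$.
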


\begin{proof}
  Let
  \begin{equation*}
    \mathcal{M} = \overline{\bigcup_{a >0} S_{a \mu} H^2 \cap \mathcal{D}}.
  \end{equation*}
  We first show that $\mathcal{M} = \mathcal{D}$.
  Since $\mathcal{M}$ is the closure of an increasing union of invariant subspaces, it is an invariant
  subspace, so it suffices to show that $\mathcal{M}$ contains a cyclic function.

  Since $E$ is a Carleson set, there exists an outer function $f \in A^\infty$ such that $f S_\mu \in \mathcal{D}$ and such that
  the zero set of $f$ is contained in $E$. (This follows from a more precise version of Theorem \ref{thm:TW_singular}; see Theorem 3.3 and the proof of Theorem 4.7 in \cite{TW70}.) Carleson's formula for the Dirichlet integral (see Section \ref{sec:prelim}) shows that if $a \in (0,1)$, then
  $S_{a \mu} f \in \mathcal{D}$ with $\|S_{a \mu}f \| \le \|S_\mu f\|$.
  Moreover, as $a \to 0$, the function $S_{a \mu} f$ converges to $f$ pointwise on $\mathbb{D}$ and hence weakly in $\mathcal{D}$. Thus, $f \in \mathcal{M}$.
  Now, $f \in \mathcal{D} \cap A(\mathbb{D})$ is outer and the boundary zero set of $f$ is countable.
  In this setting, a theorem of Hedenmalm and Shields \cite{HS90}, see also \cite{RS94} or \cite[Section 9.6]{EKM+14}, implies that $f$ is cyclic.

  To finish the proof of the lemma, let $\mathcal{M}_a = S_{a \mu} H^2 \cap \mathcal{D}$. By what was just shown,
  the orthogonal projections $P_{\mathcal{M}_a}$ converge to the identity in SOT as $a \searrow 0$. Thus,
  \begin{equation*}
  \gamma(a \mu) = \frac{P_{\mathcal{M}_a} 1}{\|P_{\mathcal{M}_a}1\|} (0) \xrightarrow{ a\to 0} 1. \qedhere
  \end{equation*}
\end{proof}

The argument in the preceding lemma is qualitative. It would be interesting to get quantitative information
on the rate of convergence of $\gamma(a \mu)$ as $a \to 0$.

We are now ready to construct new singular inner factors.

\begin{theorem}
  \label{thm:new_singuar_inner_factor}
  There exists a singular measure $\mu = \sum_{n} c_n \delta_{\tau_n}$ on $\mathbb{T}$, with $\tau_n$
  pairwise distinct points on $\mathbb{T}$,
  such that $S_\mu$ is a factor of $\mathcal{D}$ with the following property:
  There does not exist a subset $C \subset \mathbb{N}$ such that
  \begin{enumerate}
    \item $\{\tau_n: n \in C\}$ is contained in a Carleson set, and
    \item $\sum_{n \in \mathbb{N} \setminus C} \frac{1}{\log ( 1+ \frac{1}{c_n})} < \infty$.
  \end{enumerate}
\end{theorem}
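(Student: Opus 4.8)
The plan is to build $\mu$ as a sum of \emph{blocks}, $\mu = \sum_k \mu_k$, where $\mu_k = b_k \sum_{j=1}^{N_k} \delta_{w_{k,j}}$ and the points $w_{k,1},\dots,w_{k,N_k}$ are equally spaced, with spacing $d_k$, inside a tiny arc $A_k$ of length $\ell_k = N_k d_k < 1/e$; the arcs $A_k$ are pairwise disjoint and cluster at a single point, so that $\sum_k \ell_k < \infty$. Each $E_k := \{w_{k,j}\}$ is a finite set, hence a Carleson set, and the reindexed points $\tau_n$ are pairwise distinct with weights $c_n = b_{k(n)}$. The parameters $N_k$, $d_k$ and the block mass $a_k = N_k b_k$ are chosen later, always with $\sum_k a_k < \infty$ so that $\mu$ is a finite positive singular measure. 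For the factor property, since each $E_k$ is a finite Carleson set, Lemma \ref{lem:Carleson_gamma_zero} gives $\gamma(a\,\mathrm{unif}_{E_k}) \to 1$ as $a \searrow 0$ (here $\mathrm{unif}_{E_k}$ is the uniform measure on $E_k$), so I would pick $a_k > 0$ small enough that $\gamma(\mu_k) \ge 1 - 2^{-k}$. Then $S_{\mu_k}(0) = e^{-a_k} > 0$ and $\sum_k (1 - \gamma(\mu_k)) \le \sum_k 2^{-k} < \infty$, so Lemma \ref{lem:infinite_products} shows $S_\mu = \prod_k S_{\mu_k}$ is a factor of $\mathcal{D}$.

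The heart of the argument is the nonexistence of the decomposition. Write $\sigma_k = \frac{1}{\log(1 + 1/b_k)}$ (the Shapiro--Shields cost of one atom of block $k$) and $\eta_k = d_k \log \frac{1}{d_k}$. Suppose a set $C$ as in the statement existed, and for each $k$ let $m_k$ be the number of indices of block $k$ lying outside $C$, so block $k$ keeps $p_k = N_k - m_k$ points in $C$. Since $c_n = b_k$ on block $k$, condition (2) forces $\sum_k m_k \sigma_k < \infty$. On the other hand, $\overline{\{\tau_n : n \in C\}}$ is a compact subset of a Carleson set, hence itself Carleson, hence of finite entropy $\sum_j |I_j| \log \frac{1}{|I_j|} < \infty$. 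Because the arcs $A_k$ are disjoint and of length $< 1/e$, the gaps between consecutive kept points of block $k$ are complementary intervals of length in $[d_k, \ell_k]$, and since $g \mapsto g \log \frac{1}{g}$ is increasing on $(0, 1/e)$ each such gap contributes at least $\eta_k$; thus finite entropy forces $\sum_k (p_k - 1) \eta_k < \infty$, i.e. $\sum_k (N_k - m_k) \eta_k < \infty$. Using the elementary inequality $N_k \min(\sigma_k, \eta_k) \le m_k \sigma_k + (N_k - m_k) \eta_k$, the two finiteness statements combine to give $\sum_k N_k \min(\sigma_k, \eta_k) < \infty$. Consequently, if the parameters are \emph{built} so that
\begin{equation*}
  \sum_k N_k \min(\sigma_k, \eta_k) = \infty,
\end{equation*}
then no such $C$ can exist.

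It remains to choose parameters making the construction consistent, and this is where I expect the main difficulty. The geometric side is easy: taking, say, $N_k \sim k(\log k)^2$ and $d_k = N_k^{-2}$ gives $\ell_k = N_k^{-1}$ with $\sum_k \ell_k < \infty$, while $\sum_k N_k \eta_k = \sum_k \ell_k \log \frac{1}{d_k} \asymp \sum_k \frac{\log N_k}{N_k} = \infty$ and $\sum_k \eta_k < \infty$. To make the divergence survive through the minimum one wants $\eta_k \le \sigma_k$ for all $k$, i.e. $a_k \ge N_k \exp(-1/\eta_k)$; then $\min(\sigma_k, \eta_k) = \eta_k$ and $\sum_k N_k \eta_k = \infty$ does the job. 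The tension is that the factor step asks $a_k$ to be \emph{small}, whereas the no-decomposition step asks $a_k$ to be \emph{not too small}, above the astronomically small bound $N_k \exp(-1/\eta_k)$. Reconciling the two amounts to a \emph{quantitative} version of Lemma \ref{lem:Carleson_gamma_zero}: one must show that for the spread-out sets $E_k$ the defect $1 - \gamma(a\,\mathrm{unif}_{E_k})$ can be pushed below $2^{-k}$ already for $a$ that is only moderately small. The single-atom estimate of Corollary \ref{cor:atomic_gamma} is useless here, since the naive product-over-atoms bound $1 - \gamma(\mu_k) \lesssim N_k \sigma_k$ is precisely the Shapiro--Shields quantity we need to diverge; the whole point is that a measure spread over a Carleson set does much better than the same mass split into independent atoms. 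I would try to obtain the needed quantitative bound by choosing the $E_k$ to be self-similar (Cantor-type) Carleson sets and estimating $\|S_{a\,\mathrm{unif}_{E_k}} f_k\|$ for the explicit outer function $f_k \in A^\infty$ furnished by the Taylor--Williams construction behind Theorem \ref{thm:TW_singular}, whose Dirichlet norm is governed by the entropy of $E_k$. This quantitative control of $\gamma(\mu_k)$, compatible with the lower bound on $a_k$, is the step I expect to require the most work.
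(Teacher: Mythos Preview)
Your proposal has a real gap, and it is precisely the one you flag: the factor step needs $\gamma(\mu_k)\ge 1-2^{-k}$ for a value of $a_k$ that is also bounded \emph{below} (to force $\sigma_k\ge\eta_k$), but Lemma~\ref{lem:Carleson_gamma_zero} is purely qualitative and gives no rate. Since your sets $E_k$ change with $k$, a single application of the lemma does not suffice, and the quantitative estimate you sketch via Taylor--Williams outer functions would be a separate and nontrivial piece of work. As written, the construction does not close.

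The paper's proof sidesteps this tension by a different architecture: it uses \emph{rotations of one fixed measure} rather than a family of distinct Carleson sets. Fix a countable Carleson set $E=\{\lambda_n\}\cup\{1\}$ with $\lambda_n\to 1$, fix weights $b_n$ with $\sum_n b_n<\infty$ but $\sum_n 1/\log(1/b_n)=\infty$, and set $\nu=\sum_n b_n\delta_{\lambda_n}$. Lemma~\ref{lem:Carleson_gamma_zero} is applied \emph{once}, to this fixed $\nu$, yielding $a_k\searrow 0$ with $\sum_k(1-\gamma(a_k\nu))<\infty$; no lower bound on $a_k$ is ever needed. Now pick $\zeta_k$ dense in $\mathbb{T}$ (arranged so that all points $\zeta_k\lambda_n$ are distinct) and set $\mu=\sum_{k,n} a_k b_n\,\delta_{\zeta_k\lambda_n}$. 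Rotation invariance gives $\gamma(a_k\nu_k)=\gamma(a_k\nu)$ for the rotated measures $\nu_k$, so Lemma~\ref{lem:infinite_products} yields the factor property directly.

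The no-decomposition argument is also different from your entropy count. The weight at $\zeta_k\lambda_n$ is $a_k b_n$. If a set $C$ as in the statement existed, then for each fixed $k$ the set $C_k=\{n:(n,k)\in C\}$ must be infinite; otherwise all but finitely many $n$ land outside $C$, and since eventually $b_n\le a_k$ one gets $\sum_{n\ge N}\frac{1}{\log(1/(a_k b_n))}\ge \tfrac{1}{2}\sum_{n\ge N}\frac{1}{\log(1/b_n)}=\infty$, violating (2). But $C_k$ infinite and $\lambda_n\to 1$ force $\zeta_k$ into the closure of $\{\tau_n:n\in C\}$ for every $k$; density of $\{\zeta_k\}$ then makes that closure all of $\mathbb{T}$, which is certainly not a Carleson set.

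Your entropy lower bound for the retained points is correct and pleasant, but the competing constraints on $a_k$ that it creates are an artifact of letting the underlying set vary with $k$. Freezing the set and rotating it removes the constraint, and replacing the entropy count by a density argument removes any need to track how many points of each block are kept.
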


\begin{proof}
  Let $\{b_n\}$ be a sequence in $(0,1)$ such that $\sum_n b_n < \infty$, but $\sum_n \frac{1}{\log \frac{1}{b_n}} = \infty$.
  Let $\{t_n\}$ be a sequence in $(0,1)$ strictly decreasing to zero
  such that writing $\lambda_n = e^{i t_n}$, the set $E = \{\lambda_n : n \in \mathbb{N}\} \cup \{1\}$ is a Carleson set.
  Let $\nu = \sum_n b_n \delta_{\lambda_n}$.
  By Lemma \ref{lem:Carleson_gamma_zero}, there exists a sequence $\{a_k\}$ in $(0,1)$ such that
  $\sum_k (1 - \gamma(a_k \nu)) < \infty$
  and $\sum_k a_k < \infty$. (The second condition actually follows from the first.)
  Recursively, choose a sequence $\{\zeta_k \}$ in $\mathbb{T}$ that is dense in $\mathbb{T}$
  and such that the points $\zeta_k \lambda_n$ for $k,n \in \mathbb{N}$ are all distinct.
  (This is possible since for each $k \in \mathbb{N}$, there are only countably
  many points $\zeta \in \mathbb{T}$ such that $\zeta E \cap (\bigcup_{j=1}^{k-1} \zeta_j E) \neq \emptyset$.)
  Finally, define
  \begin{equation*}
    \mu = \sum_{n,k} a_k b_n \delta_{\zeta_k \lambda_n}.
  \end{equation*}
  We claim that $\mu$ fulfills all conditions of the theorem.

  Let $\nu_k = \sum_n b_n \delta_{\zeta_k \lambda_n}$, hence $\mu = \sum_k a_k \nu_k$.
  By rotation invariance, $\gamma(a_k \nu_k) = \gamma(a_k \nu)$ for all $k$.
  Thus, Lemma \ref{lem:infinite_products} shows that $S_\mu$ is a factor of $\mathcal{D}$.

  Now, let $K \subset \mathbb{T}$ be a closed subset
  and assume that there exists a subset $C \subset \mathbb{N} \times \mathbb{N}$ such that
  $\{\lambda_n \zeta_k: (n,k) \in C\} \subset K$  and $\sum_{(n,k) \in (\mathbb{N} \times \mathbb{N}) \setminus C} \frac{1}{\log \frac{1}{a_k b_n}} < \infty$.
  We will show that $K = \mathbb{T}$; in particular, $K$ cannot be a Carleson set.
  For $k \in \mathbb{N}$, let $C_k = \{n \in \mathbb{N}: (n,k) \in C\}$.
  We claim that each $C_k$ is infinite.
  Indeed, if $C_k$ were finite for some $k$, then there would exist $N \in \mathbb{N}$
  such that for all $n \ge N$, both $n \in \mathbb{N} \setminus C_k$ and $b_n \le a_k$,
  hence
  \begin{equation*}
    \infty > \sum_{(n,k) \in (\mathbb{N} \times \mathbb{N}) \setminus C} \frac{1}{\log \frac{1}{a_k b_n}}
    \ge \sum_{n = N}^\infty \frac{1}{\log \frac{1}{a_k b_n}}
    \ge \frac{1}{2} \sum_{n=N}^\infty \frac{1}{\log \frac{1}{b_n}},
  \end{equation*}
  contradicting out choice of $b_n$. This shows that each $C_k$ is infinite.

  Therefore, for each $k \in \mathbb{N}$, there exists a sequence $\{n_j\}$ in $C_k$ tending to infinity,
  thus $\lambda_{n_j} \zeta_k \in E$ for all $j$. Since $\{\lambda_{n_j}\}$ tends to $1$ and since $K$ is closed,
  it follows that $\zeta_k \in K$ for all $k$. Our choice of $\{\zeta_k\}$ then implies that $K = \mathbb{T}$,
  as asserted.
\end{proof}

As a consequence, we also obtain a zero set that cannot be obtained by combining the Shapiro--Shields theorem
and the Carleson set condition.

\begin{corollary}
  There exists a sequence $\{z_n\}$ in $\mathbb{D} \setminus \{0\}$ that is a zero set of $\mathcal{D}$ such that there does not exist a subset
  $C \subset \mathbb{N}$ such that
  \begin{enumerate}
    \item $\{ \frac{z_n}{|z_n|}: n \in C \}$ is contained in a Carleson set, and
    \item $\sum_{n \in \mathbb{N} \setminus C} \frac{1}{\log(\frac{1}{1 -|z_n|})} < \infty$.
  \end{enumerate}
\end{corollary}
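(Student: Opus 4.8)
The plan is to deduce this corollary directly from Theorem \ref{thm:new_singuar_inner_factor} by radially pulling each atom of the singular measure into the disc. Concretely, I would fix the measure $\mu = \sum_n c_n \delta_{\tau_n}$ produced by Theorem \ref{thm:new_singuar_inner_factor}, recall that the $\tau_n$ are pairwise distinct and that $c_n \to 0$ (indeed $\sum_n c_n < \infty$), and define $z_n = r_n \tau_n$, where $r_n \in (0,1)$ is chosen so that $1 - r_n$ is comparable to $c_n$; for definiteness I would take $1 - r_n = c_n/2$. Then $z_n/|z_n| = \tau_n$, the points $z_n$ are pairwise distinct and lie in $\mathbb{D} \setminus \{0\}$, and $\{z_n\}$ is a Blaschke sequence since $\sum_n (1 - |z_n|) = \tfrac12 \sum_n c_n < \infty$.

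To show that $\{z_n\}$ is a zero set, I would apply Theorem \ref{thm:atomic-zero} with the partition of $\{z_n\}$ into singletons $Z_n = \{z_n\}$ and vertices $\lambda_n = \tau_n$. A direct computation gives $a_n := 2\frac{|\tau_n - z_n|^2}{1 - |z_n|^2} = 2\frac{1 - r_n}{1 + r_n}$, and with $1 - r_n = c_n/2$ this becomes $a_n = \frac{c_n}{1 + r_n} \le c_n$. Since $S_\mu$ is a factor of $\mathcal{D}$ and $a_n \le c_n$, the Carleson's-formula comparison recorded after Theorem \ref{thm:pushing_out} shows that $S_{\mu'}$ is also a factor of $\mathcal{D}$, where $\mu' = \sum_n a_n \delta_{\tau_n}$. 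Theorem \ref{thm:atomic-zero} then yields that $\{z_n\}$ is a zero set for the Dirichlet space.

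It remains to transfer the non-existence of a good decomposition from the measure to the zero set. Suppose toward a contradiction that there were a subset $C \subset \mathbb{N}$ with $\{z_n/|z_n| : n \in C\} = \{\tau_n : n \in C\}$ contained in a Carleson set and with $\sum_{n \notin C} \frac{1}{\log(1/(1-|z_n|))} < \infty$. Since $1 - |z_n| = c_n/2$, we have $\log \frac{1}{1 - |z_n|} = \log 2 + \log \frac{1}{c_n}$, which is comparable to $\log(1 + \frac{1}{c_n})$ as $c_n \to 0$. Hence the summability condition for the $z_n$ is equivalent to $\sum_{n \notin C} \frac{1}{\log(1 + 1/c_n)} < \infty$, while the Carleson-set condition on $\{\tau_n : n \in C\}$ is literally condition (1) of Theorem \ref{thm:new_singuar_inner_factor}. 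Thus such a $C$ would satisfy both conditions of that theorem, a contradiction.

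The only genuinely delicate point is the comparison of logarithms in the last paragraph: the radial profile $1 - r_n$ must be chosen so that both the hypothesis $a_n \le c_n$ needed for Theorem \ref{thm:atomic-zero} and the logarithmic equivalence $\log \frac{1}{1 - r_n} \sim \log(1 + 1/c_n)$ hold simultaneously. The choice $1 - r_n = c_n/2$ achieves both at once, since it forces $a_n = \frac{c_n}{1 + r_n} \le c_n$ while shifting the relevant logarithm only by the bounded additive constant $\log 2$. Everything else is a routine translation between the measure-theoretic statement of Theorem \ref{thm:new_singuar_inner_factor} and its zero-set counterpart.
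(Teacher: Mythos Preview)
Your proof is correct and follows essentially the same route as the paper. The paper takes $z_n = (1-c_n)\tau_n$ and invokes Corollary~\ref{cor:boundary_implies_zero} directly (which packages the singleton case of Theorem~\ref{thm:atomic-zero} together with the Carleson-formula monotonicity you cite), whereas you take $1-r_n = c_n/2$ and unpack that corollary by hand; both choices make the logarithmic comparison in the final paragraph immediate, and the rest is identical.
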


\begin{proof}
  Let $\mu = \sum_n c_n \delta_{\tau_n}$ be a measure provided by Theorem  \ref{thm:new_singuar_inner_factor}. Since $c_n \to 0$, we may assume that $c_n \in (0,\frac{1}{2})$ for all $n \in \mathbb{N}$. Let $z_n = (1 -c_n) \tau_n$. Since $S_\mu$ is a factor, Corollary \ref{cor:boundary_implies_zero} shows that $\{z_n\}$ is a zero set, and the remaining properties hold by Theorem \ref{thm:new_singuar_inner_factor}.
\end{proof}

\section{Standard weighted Dirichlet spaces}

For $\wedi \in (0,1)$, let $\mathcal{D}_s$ be the space of all holmorphic functions $f$ on $\mathbb{D}$ such that
\begin{equation*}
  \|f\|_\wedi^2 := \|f\|_{H^2}^2 + \int_{\mathbb{D}} |f'(z)|^2 (1 - |z|^2)^\wedi d A(z) < \infty.
\end{equation*}
These spaces are weighted Dirichlet spaces.
In this section, we will extend the sufficiency part of Theorem \ref{thm:SS_intro} to the spaces $\mathcal{D}_\wedi$.
The key is the following partial analogue of Corollary \ref{cor:atomic_gamma}. Since the proof is quite different, we provide the details. We will write $\gamma_\wedi = \gamma_{\mathcal{D}_\wedi}$ for the value of the extremal function at the origin.

\begin{lemma}
  \label{lem:atomic_gamma_wedi}
  Let $\wedi \in (0,1), \lambda \in \mathbb{T}, a > 0$ and let $S_a(z) = \exp(-a \frac{\lambda+z}{\lambda-z})$. Then
  \begin{equation*}
    1 - \gamma_\wedi(S_a H^2 \cap \mathcal{D}_\wedi) \le c a^\wedi \quad \text{ as } a \to 0,
  \end{equation*}
  where $c> 0$ is a constant that only depends on $\wedi$.
\end{lemma}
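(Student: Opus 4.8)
The plan is to realize $\gamma_\wedi$ as the norm of a projection and then bound a distance by a well-chosen test function. By rotation invariance we may take $\lambda = 1$, so that $S_a(z) = \exp(-a\frac{1+z}{1-z})$ and $S_a' = \frac{-2a}{(1-z)^2}S_a$. In $\mathcal{D}_\wedi$ the constant function $1$ is the reproducing kernel at the origin and $\|1\|_\wedi = 1$; hence, writing $\mathcal{M} = S_a H^2\cap\mathcal{D}_\wedi$ and $P_{\mathcal{M}}$ for the orthogonal projection, we have $\gamma_\wedi(\mathcal{M}) = \|P_{\mathcal{M}}1\|_\wedi$ and therefore
\[ 1 - \gamma_\wedi(\mathcal{M}) \le 1 - \gamma_\wedi(\mathcal{M})^2 = \dist(1,\mathcal{M})^2 \le \|1 - S_a g\|_\wedi^2 \]
for every $g\in H^2$ with $S_a g\in\mathcal{D}_\wedi$. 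Thus it suffices to exhibit a single such $g$ with $\|1 - S_a g\|_\wedi^2\le c\,a^\wedi$.

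The naive choice $g=1$ (i.e.\ testing with $S_a$ itself) fails when $\wedi\le\frac12$: the factor $|1-z|^{-2}$ in $S_a'$ is not tamed by $|S_a|$ along tangential approach to $1$, where $|S_a|\to1$, and indeed one checks $S_a\notin\mathcal{D}_\wedi$ in exactly this range. I would therefore \emph{localize}, taking $g_a(z) = \frac{1-z}{1-z+2a}$, so that $1 - g_a(z) = \frac{2a}{1-z+2a}$. This $g_a$ is outer, bounded by $1$ on $\D$ (since $|1-z+2a|\ge|1-z|$ as $\Re(1-z)>0$), equals $\approx 1$ off a disc of radius $\sim a$ about $1$, vanishes at $z=1$, and has $g_a(0)=(1+2a)^{-1}$. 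For the $H^2$ part of the norm, $1 - S_a g_a = (1-S_a) + S_a(1-g_a)$ gives $\|1-S_a g_a\|_{H^2}^2\le 2\|1-S_a\|_{H^2}^2 + 2\|1-g_a\|_{H^2}^2$, and the elementary identities $\|1-S_a\|_{H^2}^2 = 2(1-e^{-a})$ and $\|1-g_a\|_{H^2}^2 = \frac{a}{1+a}$ show this is $O(a)\le O(a^\wedi)$. For the Dirichlet part I would use
\[ (S_a g_a)' = \frac{-2a\,S_a}{1-z+2a}\Big(\frac{1}{1-z}+\frac{1}{1-z+2a}\Big), \]
so that $|(S_a g_a)'|\le \frac{4a\,|S_a|}{|1-z|\,|1-z+2a|}$.

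The heart of the proof is the bound $\int_\D|(S_a g_a)'|^2(1-|z|^2)^\wedi\,dA\le C_\wedi a^\wedi$. I would transport the integral to the upper half-plane $\mathbb{H}$ via $z = \frac{w-i}{w+i}$, under which $|1-z|^2 = 4/|w+i|^2$, $\ 1-|z|^2 = 4\,\Im w/|w+i|^2$, $\ |S_a|^2 = e^{-2a\,\Im w}$, and the normalized area element acquires a Jacobian $4/(\pi|w+i|^4)$; the integral becomes a constant times
\[ a^2\int_{\mathbb{H}}\frac{(\Im w)^\wedi\,e^{-2a\,\Im w}}{|w+i|^{2\wedi}\,|aw+i(1+a)|^2}\,dA(w). \]
The decisive move is the scaling $w\mapsto w/a$, which pulls out precisely the factor $a^\wedi$ and reduces matters to showing that
\[ \int_0^\infty\!\!\int_{-\infty}^\infty\frac{V^\wedi\,e^{-2V}}{(U^2+(V+a)^2)^\wedi\,(U^2+(V+1+a)^2)}\,dU\,dV \]
stays bounded as $a\to0$. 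Since the integrand increases as $a\searrow0$, it is dominated by its value at $a=0$, which is finite: in polar coordinates the origin gives an integrable $\rho^{1-\wedi}$ singularity, and at infinity the factors $e^{-2V}$ and $(U^2+(V+1)^2)$ give decay. Combining the two parts yields $\|1-S_a g_a\|_\wedi^2\le c\,a^\wedi$, with $c$ depending only on $\wedi$, as required.

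The main obstacle is precisely the weighted Dirichlet estimate in the range $\wedi\le\frac12$. The subtle point is to choose a localizing factor that simultaneously cures the tangential non-integrability of $S_a'$ and keeps the $H^2$ deviation of $g_a$ of order $a$; the factor $(1-z+2a)^{-1}$ achieves both, and routing the computation through the half-plane and the scaling $w\mapsto w/a$ is what makes the exponent $a^\wedi$ emerge together with a constant depending only on $\wedi$.
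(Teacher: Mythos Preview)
Your argument is correct and shares the paper's core idea: test with $S_a$ multiplied by an outer factor that vanishes at $\lambda$ and has a pole at distance $\sim a$ from $\lambda$. The paper uses $\frac{1-z}{1-(1-a)z}$ and the cruder reduction $\gamma_\wedi \ge f(0)/\|f\|_\wedi$, then invokes Forelli--Rudin type estimates (cited from \cite{OF96_1}) to bound the weighted Dirichlet integral by $C a^\wedi$. You use the near-identical factor $\frac{1-z}{1-z+2a}$ but frame the reduction as $1-\gamma_\wedi \le \operatorname{dist}(1,\mathcal{M})^2$ and handle the weighted integral by hand via the Cayley transform and the rescaling $w\mapsto w/a$, which makes the exponent $a^\wedi$ appear transparently and avoids any external integral lemma. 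The trade-off is that the paper's route is shorter if one is willing to quote Forelli--Rudin, while yours is fully self-contained; otherwise the two proofs are interchangeable.
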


\begin{proof}
  By rotation invariance, we may assume that $\lambda = 1$.
  We use the function
  \begin{equation*}
    f(z) = S_a(z) \frac{1-z}{1- r z},
  \end{equation*}
  where $r \in (0,1)$ will be chosen later. Since
  \begin{equation*}
    f'(z) = - \frac{2 a}{(1-z)^2} S_a(z) \frac{1-z}{1-r z} + S_a(z) \frac{r - 1}{(1 - r z)^2},
  \end{equation*}
  we find that
  \begin{equation*}
    |f'(z)|^2 \lesssim \frac{a^2}{|1-z|^2} \frac{1}{|1 - r z|^2} + \frac{(1-r)^2}{|1 - r z|^4}.
  \end{equation*}
  Known integral estimates   show that
  \begin{equation*}
    \int_{\mathbb{D}} \frac{(1 - |z|^2)^\wedi}{|1 - r z|^4} d A(z)
    \lesssim
    \int_{\mathbb{D}} \frac{(1 - |z|^2)^\wedi}{|1 - r z|^2 |1-z|^2} d A(z)
\lesssim \frac{1}{(1-r)^{2 - \wedi}}.
  \end{equation*}
  A reference for this, which also applies to the unit ball of $\C^n$, is  \cite[Lemma 2.5]{OF96_1}.
  In fact, that the left hand side is bounded by the right hand side is what is often referred to as a Forelli--Rudin estimate (for the one dimensional case also see \cite{SW71}, Lemmas 5 and 6). That the term in the middle is equivalent in size to the term on the right can be derived by splitting the integration into the two cases $|1-z|<(1-r)/2$ and $|1-z|\ge (1-r)/2$. In the first region one can use the change of variables $w=1-z$ along with some straightforward inequalities, and in the second region one has $|1-rz|\lesssim |1-z|$ and can use the standard estimate.
  Thus,
  \begin{equation*}
    \int_{\mathbb{D}} |f'(z)|^2 (1 - |z|^2)^\wedi d A(z) \lesssim \frac{a^2}{(1 - r)^{2 - \wedi}} + (1 - r)^\wedi.
  \end{equation*}
  We now choose $r= 1 - a$, which makes the last quantity comparable to $a^s$.
  Since
  \begin{equation*}
    \|f\|_{H^2}^2 \le \|f\|_{H^\infty}^2 = \frac{4}{(1 +r)^2} = \frac{4}{(2 -a)^2} = 1 + O(a),
  \end{equation*}
  it follows that
  \begin{equation*}
    \|f\|_{\wedi}^2 = \|f\|_{H^2}^2 + \int_{\mathbb{D}} |f'(z)|^2 (1 - |z|^2)^\wedi d A(z)
    \le 1 + c a^\wedi
  \end{equation*}
  for some $c > 0$. Hence
  \begin{equation*}
    \gamma_{\mathcal{D}_\wedi}(S_a H^2 \cap D_\wedi) \ge \frac{f(0)}{\|f\|_{\mathcal{D}_\wedi}} \ge \frac{e^{-a}}{\sqrt{1 + c a^\wedi}} = 1 - \frac{c}{2} a^\wedi + o(a^{\wedi}),
  \end{equation*}
  as desired.
\end{proof}

\begin{remark}
  The method of proof in Lemma \ref{lem:atomic_gamma_wedi} can also be used to obtain the upper bound in Corollary \ref{cor:atomic_gamma} in the Dirichlet space. However, it is not sufficient to work with $f(z) =  S_a(z) \frac{1 - z}{1 - r z}$, since
  \begin{equation*}
    \|f\|_{\mathcal{D}}^2 \ge \Big\| \frac{1 - z}{1 - r z} \Big\|_{\mathcal{D}}^2 \ge \frac{5}{4},
  \end{equation*}
  where the last inequality follows from an elementary computation with power series coefficients or from the observation that the image of the map $z \mapsto \frac{1-z}{1 - r z}$ is a disc of radius $\frac{1}{1+r} \ge \frac{1}{2}$. Thus, $\|f\|_{\mathcal{D}}$ does not tend to $1$ as $a \to 0$, so $f$ is not useful for showing that $\gamma_\mathcal{D}(S_a H^2 \cap \mathcal{D}) \to 1$ as $a \to 0$.

  Instead, one can replace $f$ with $g(z) = S_a(z) (1 - z) p_n(z)$, where $p_n$ is the degree $n$ polynomial minimizing
  $\|(1- z) p_n - 1\|_{\mathcal{D}}$. Such polynomials are called optimal polynomial approximants and were computed in \cite[Lemma 3.1]{BCL+15}. Using those results and chosing $n = \lceil a^{-1/2} \rceil$, one can recover the bound $1 - \gamma_\mathcal{D}(S_a H^2 \cap \mathcal{D}) \lesssim \frac{1}{\log \frac{1}{a}}$ of Corollary \ref{cor:atomic_gamma}. We omit the details.
\end{remark}

With Lemma \ref{lem:atomic_gamma_wedi} in hand, we can now prove an analogue of the sufficiency part of Theorem \ref{thm:SS_intro}.

\begin{theorem}
  \label{thm:SS_wedi}
  Let $\wedi \in (0,1)$.
  Let $\{a_n\}$ be a sequence of positive numbers such that
  \begin{equation*}
    \sum_n a_n^s < \infty
  \end{equation*}
  and let $\{\lambda_n\}$ be a sequence in $\mathbb{T}$. Let $S_\mu$ be the singular inner function coresponding to
  \begin{equation*}
    \mu = \sum_n a_n \delta_{\lambda_n}.
  \end{equation*}
  Then there exists $g \in H^2 \setminus \{0\}$ such that $S_\mu g \in \mathcal{D}_\wedi$.
\end{theorem}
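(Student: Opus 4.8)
The plan is to run the argument of Proposition \ref{prop:SS_singular_suff} inside $\mathcal{D}_\wedi$, feeding in Lemma \ref{lem:atomic_gamma_wedi} in place of Corollary \ref{cor:atomic_gamma}. The one point that needs attention is that the infinite product construction of Lemma \ref{lem:infinite_products} is available in $\mathcal{D}_\wedi$. For $\wedi \in (0,1)$ the space $\mathcal{D}_\wedi$ is a standard weighted Dirichlet space contained in $H^2$, hence a complete Pick space, and it has the Fatou property by Fatou's lemma (cf.\ Example \ref{exa:Fatou_basic}). These are exactly the two features used in the proof of Lemma \ref{lem:infinite_products}: extremal functions of invariant subspaces are contractive multipliers, and products of contractive multipliers are again contractive multipliers, so that a product $\prod_n \varphi_n$ of contractive multipliers with $\sum_n (1 - \varphi_n(0)) < \infty$ has partial products uniformly bounded (by $1$) in multiplier norm; combined with the Fatou property this forces convergence weak-$*$ in $\Mult(\mathcal{D}_\wedi)$ to a multiplier. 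I would therefore first record that Lemma \ref{lem:infinite_products} holds verbatim with $\mathcal{D}$ replaced by $\mathcal{D}_\wedi$ and $\gamma$ replaced by $\gamma_\wedi$.

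Granting this, I would write $\mu = \sum_n \mu_n$ with $\mu_n = a_n \delta_{\lambda_n}$, so that $S_\mu = \prod_n S_{\mu_n}$, each factor $S_{\mu_n}(z) = \exp(-a_n \frac{\lambda_n + z}{\lambda_n - z})$ satisfying $S_{\mu_n}(0) = e^{-a_n} > 0$; note that $\sum_n a_n^\wedi < \infty$ forces $a_n \to 0$, hence $a_n \le a_n^\wedi$ eventually and $\mu(\T) = \sum_n a_n < \infty$. Each $S_{\mu_n}$ is a factor of $\mathcal{D}_\wedi$: the test function $f_n(z) = S_{\mu_n}(z) \frac{\lambda_n - z}{\lambda_n - r_n z}$ from the proof of Lemma \ref{lem:atomic_gamma_wedi} (with $r_n \in (0,1)$ as in that proof) lies in $\mathcal{D}_\wedi$ and has inner factor precisely $S_{\mu_n}$, since $\frac{\lambda_n - z}{\lambda_n - r_n z}$ is outer. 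Discarding finitely many indices so that Lemma \ref{lem:atomic_gamma_wedi} applies to every remaining $n$, I obtain
\begin{equation*}
  1 - \gamma_\wedi(S_{\mu_n} H^2 \cap \mathcal{D}_\wedi) \le c\, a_n^\wedi,
\end{equation*}
with $c$ depending only on $\wedi$, and therefore $\sum_n \bigl(1 - \gamma_\wedi(S_{\mu_n} H^2 \cap \mathcal{D}_\wedi)\bigr) \le c \sum_n a_n^\wedi < \infty$.

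The weighted version of Lemma \ref{lem:infinite_products} then applies and produces $\varphi = \prod_n \varphi_n \in \Mult(\mathcal{D}_\wedi)$, where $\varphi_n$ denotes the extremal function for $S_{\mu_n} H^2 \cap \mathcal{D}_\wedi$. Factoring out the inner parts $\varphi_n = S_{\mu_n} \psi_n$ exactly as in that proof gives $\varphi = S_\mu \psi$ with $\psi = \prod_n \psi_n \in H^\infty \setminus \{0\}$, the product being nonvanishing at $0$ because $\psi_n(0) \ge \varphi_n(0)$ and $\sum_n (1 - \varphi_n(0)) < \infty$. Since $\varphi \in \Mult(\mathcal{D}_\wedi) \subset \mathcal{D}_\wedi$, setting $g = \psi \in H^2 \setminus \{0\}$ yields $S_\mu g = \varphi \in \mathcal{D}_\wedi$, which is the assertion.

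The main obstacle is structural rather than computational: one must be certain that $\mathcal{D}_\wedi$, equipped with the norm $\|\cdot\|_\wedi$ under which Lemma \ref{lem:atomic_gamma_wedi} was established, genuinely carries the complete Pick structure that makes extremal functions contractive multipliers, so that Lemma \ref{lem:infinite_products} transfers. It is precisely this that supplies the uniform (submultiplicative) control of the partial products, which a naive estimate cannot provide, since the individual derivative norms $\|f_n'\|_{L^2((1-|z|^2)^\wedi dA)} \lesssim a_n^{\wedi/2}$ need not be summable. Once the complete Pick input is in hand, the remainder is a direct transcription of the proof of Proposition \ref{prop:SS_singular_suff}.
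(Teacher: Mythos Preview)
Your proposal is correct and follows essentially the same approach as the paper: both run the infinite product argument of Lemma \ref{lem:infinite_products} and Proposition \ref{prop:SS_singular_suff} in $\mathcal{D}_\wedi$, using Lemma \ref{lem:atomic_gamma_wedi} for summability and the complete Pick property of $\mathcal{D}_\wedi$ (which the paper attributes to Shimorin) to ensure that extremal functions are contractive multipliers. Your write-up is simply a more detailed version of the paper's terse proof, and you correctly flag the one genuine structural point, namely that the complete Pick property must hold for the specific norm $\|\cdot\|_\wedi$.
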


\begin{proof}
  As in the proof of Proposition \ref{prop:SS_singular_suff} and Lemma \ref{lem:infinite_products}, we can take an infinite product of extremal functions.
  The key point is that extremal functions continue to be contractive multipliers of $\mathcal{D}_\wedi$, since
  $\mathcal{D}_\wedi$ is a complete Pick space by a theorem of Shimorin \cite{Shimorin02}, see also the comments in Section \ref{sec:prelim}. Lemma \ref{lem:atomic_gamma_wedi} and the hypothesis on $\{a_n\}$ then guarantee convergence of the infinite product.
\end{proof}

In the setting of the last theorem, the function $g$ necessarily belongs to $\mathcal{D}_s$;
see \cite[Theorem IV.3.4]{Aleman93}.

\appendix
\section{A radial complete Pick space where points cannot be pushed out}

In this appendix, we construct a radial complete Pick space in which the analogue of Theorem \ref{thm:pushing_out} fails.
It is well-known that extremal functions for finite zero sets can be represented by use of the determinants of certain Gramians associated with the reproducing kernel functions, see \cite{SS62}. For numerical experiments we found the following variation useful. The argument is standard, for similar results, see e.g.\ \cite[Theorem 3.4]{PR16}.

\begin{lemma}
  \label{lem:compute_gamma}
  Let $\mathcal{H}$ be an RKHS on $X$ with kernel $K$ that is normalized at a point $z_0 \in X$, meaning
  that $K(z,z_0) = 1$ for all $z \in X$.
  Let $\{z_1,\ldots,z_n\} \subset X$ be a set of $n$ distinct points. Let
  $L = [K(z_i,z_j)] \in M_n(\mathbb{C})$, and let $\ones \in \mathbb{C}^n$ be the all ones vector.

  Let
  \begin{equation*}
    \gamma = \sup\{ |f(z_0)|: f(z_k) = 0 \text{ for } 1 \le k \le n, \|f\| \le 1\}.
  \end{equation*}

  If $M\in M_n(\mathbb{C})$ is any matrix that satisfies $LML=L$, then
  \begin{equation*}
    \gamma^2 = 1-\langle M \ones, \ones \rangle .
  \end{equation*}
\end{lemma}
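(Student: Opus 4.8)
The plan is to reduce the extremal problem to computing the norm of an orthogonal projection, then to translate that norm into linear algebra on $\C^n$ via the normal equations, and finally to check that any generalized inverse $M$ reproduces the correct scalar. Write $K_w = K(\cdot,w)$ for the kernel function at $w$. The normalization $K(z,z_0)=1$ has two consequences I would record first: $K_{z_0}$ is the constant function $1$, and $\|K_{z_0}\|^2 = \la K_{z_0},K_{z_0}\ra = K(z_0,z_0)=1$. The subspace $\mathcal{M}=\{f : f(z_k)=0,\ 1\le k\le n\}$ is exactly the orthogonal complement of the (finite-dimensional, hence closed) subspace $\mathcal{N} := \operatorname{span}\{K_{z_1},\ldots,K_{z_n}\}$, since $f(z_k)=\la f,K_{z_k}\ra$. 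Because $f(z_0)=\la f,K_{z_0}\ra = \la f, P_{\mathcal{M}}K_{z_0}\ra$ for $f\in\mathcal{M}$, the standard Cauchy--Schwarz argument for RKHS extremal problems gives $\gamma=\|P_{\mathcal{M}}K_{z_0}\|$, the supremum being attained at a normalized multiple of $P_{\mathcal{M}}K_{z_0}$. Since $P_{\mathcal{M}}=I-P_{\mathcal{N}}$, Pythagoras yields $\gamma^2 = \|K_{z_0}\|^2 - \|P_{\mathcal{N}}K_{z_0}\|^2 = 1 - \|P_{\mathcal{N}}K_{z_0}\|^2$, so everything reduces to evaluating $\|P_{\mathcal{N}}K_{z_0}\|^2$.

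Next I would compute this quantity through the normal equations. Writing $P_{\mathcal{N}}K_{z_0}=\sum_j c_j K_{z_j}$ and pairing with each $K_{z_i}$ gives $\sum_j c_j K(z_i,z_j)=\la K_{z_0},K_{z_i}\ra = 1$, that is $Lc=\ones$; in particular $\ones\in\operatorname{range}(L)$. Since $P_{\mathcal{N}}$ is a self-adjoint idempotent, $\|P_{\mathcal{N}}K_{z_0}\|^2 = \la P_{\mathcal{N}}K_{z_0},K_{z_0}\ra = \sum_j c_j \la K_{z_j},K_{z_0}\ra = \sum_j c_j = \la c,\ones\ra$, using $K(z_0,z_j)=1$. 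This number is real and nonnegative, which lets me avoid conjugation bookkeeping in the final step.

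Finally I would show $\la M\ones,\ones\ra = \la c,\ones\ra$ for any $M$ with $LML=L$, which is what makes the formula independent of the (possibly non-unique) choices of $c$ and $M$. Since $\ones=Lc$, the generalized-inverse identity gives $LM\ones = LMLc = Lc = \ones$, so $M\ones$ is again a solution of $Lx=\ones$. Using $L^*=L$, I then compute $\la M\ones,\ones\ra = \la M\ones, Lc\ra = \la LM\ones, c\ra = \la\ones,c\ra$, and since $\la c,\ones\ra$ is real this equals $\la c,\ones\ra = \|P_{\mathcal{N}}K_{z_0}\|^2$. Combining with the first step gives $\gamma^2 = 1 - \la M\ones,\ones\ra$.

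The main subtlety, and the reason the statement is phrased with a generalized inverse rather than $L^{-1}$, is that $L$ may be singular when $K_{z_1},\ldots,K_{z_n}$ are linearly dependent. The crux is therefore the observation that $\ones\in\operatorname{range}(L)$, which I would obtain either directly from the normal equations above or from $\operatorname{range}(L)=(\ker L)^\perp$ together with the fact that $Lv=0$ forces $\sum_j v_j K_{z_j}=0$ and hence (evaluating at $z_0$) $\sum_j v_j = \la v,\ones\ra = 0$. It is precisely this membership that yields $LM\ones=\ones$ and thereby renders $\la M\ones,\ones\ra$ the same for every solution $M$ of $LML=L$.
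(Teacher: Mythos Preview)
Your proof is correct. The reduction to $\gamma^2 = 1 - \|P_{\mathcal{N}}K_{z_0}\|^2$ via Cauchy--Schwarz is identical to the paper's. The difference lies in the linear-algebra step: the paper introduces the evaluation operator $T:\mathcal{N}\to\C^n$, $f\mapsto(\langle f,K_{z_j}\rangle)_j$, observes that $L=TT^*$ and that $T$ is injective, and from $LML=L$ deduces $T^*MT=\operatorname{id}_{\mathcal{N}}$, which immediately gives $\|P_{\mathcal{N}}1\|^2=\langle M\ones,\ones\rangle$. You instead work directly in coordinates via the normal equations $Lc=\ones$, identify $\|P_{\mathcal{N}}K_{z_0}\|^2=\langle c,\ones\rangle$, and then use $LM\ones=LMLc=\ones$ together with $L^*=L$ to show that $\langle M\ones,\ones\rangle=\langle c,\ones\rangle$ regardless of the choice of $c$ or $M$. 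Your route is slightly more elementary in that it avoids the auxiliary operator and makes the key fact $\ones\in\operatorname{range}(L)$ explicit; the paper's operator framing is a bit slicker and handles the possible degeneracy of $L$ without ever naming a coefficient vector. The two arguments are really the same computation in different packaging.
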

Thus, if $L$ is invertible, then we can take $M=L^{-1}$. If $L$ is not invertible, which happens when the kernel functions $K_{z_1}, \dots, K_{z_n}$ are linearly dependent, then one can take $M=L^+$, the Moore-Penrose inverse of $L$.

\begin{proof}  Let $\mathcal N$ be the linear span of  $K_{z_1}, \dots, K_{z_n}$. We will write $P_\mathcal N$ and $P_{\mathcal N^\perp}$ for the projections onto $\mathcal N$ and $\mathcal N^\perp$.

Then since $K_{z_0}=1$ we have $\gamma=\sup\{|\langle f, 1\rangle|: f\in \mathcal N^\perp, \|f\|\le 1\}$ and
for   $f\in \mathcal N^\perp$  $$|f(z_0)|=|\langle f, 1\rangle| = |\langle f, P_{\mathcal N^\perp} 1\rangle |\le  \|f\|\|P_{\mathcal N^\perp} 1\|,$$ and equality holds, if and only if $f$ is a multiple of $P_{\mathcal N^\perp} 1$. Thus, $\gamma^2=\|P_{\mathcal N^\perp} 1\|^2=1-\|P_{\mathcal N} 1\|^2$.

Let
  \[
    T: \mathcal{N} \to \mathbb{C}^n, \quad f \mapsto ( \langle f,K_{z_j} \rangle)_{j=1}^n.
  \]
  Then $T P_{\mathcal{N}}1 = \ones$.
  One checks that $T^* e_j = K_{z_j}$ for each $j$. In particular, $T^*$ is surjective, so
  $T$ is injective, and we have $L = T T^*$. Thus, if a matrix $M$ satisfies $LML = L$,
  then $T T^* M T T^* = T T^*$ and hence $T^* M T = \operatorname{id}_{\mathcal{N}}$. It follows that
  \begin{equation*}
    1 - \gamma^2 = \|P_{\mathcal{N}} 1 \|^2 = \langle P_{\mathcal{N}} 1, P_\mathcal{N} 1 \rangle
    = \langle T^* M T P_{\mathcal{N}}1 , P_{\mathcal{N}} 1 \rangle
    = \langle M \ones, \ones \rangle. \qedhere
  \end{equation*}
\end{proof}

If $n$ is large, then the expression for $\gamma$ in Lemma \ref{lem:compute_gamma} may only be useful for numerical experiments. However for two points one obtains a simple formula that we will take advantage of in a moment.
 If $K$ is any reproducing kernel on $X$, and if $r,t\in X$ such that $K(\cdot,r)$ and $K(\cdot,t)$ are linearly independent, then the $2\times 2$ Gramian
$$L= \left[\begin{matrix} K(r,r) &K(t,r)\\ K(r,t)&K(t,t)\end{matrix}\right]$$ has inverse
$$ L^{-1}=\frac{1}{\det L} \left[\begin{matrix} K(t,t) &-K(t,r)\\ -K(r,t)&K(r,r)\end{matrix}\right].$$
Thus,
\begin{align}\label{equ:2pts}\la L^{-1}\ones, \ones\ra= \frac{K(r,r)+K(t,t)-2 \mathrm{Re}K(t,r)}{K(r,r)K(t,t)-|K(t,r)|^2}.\end{align}

\

We now come to the announced example.
The basic idea is to construct a space that privileges even functions over odd functions, so that zeros tend to come in pairs $\pm z$, and pushing out only one of those zeros decreases $\gamma$.
\begin{example}
  Let $a \in [0,1)$ and let $\mathcal{H}_a$ be the RKHS on $\mathbb{D}$ with kernel
  \begin{equation*}
  K_a(z,w) = \frac{1}{1 - a (z \overline{w}) - (1 - a) (z \overline{w})^2}.
  \end{equation*}
  Since $1 - \frac{1}{K_a}$ is positive semi-definite, each $\mathcal{H}_a$ is a complete Pick space, normalized at $0$.
  Fix $r \in (0,1)$. For $t \in (0,1)$, let
  \begin{equation*}
    \gamma_a(t) = \sup \{ |f(0)|: f(-r) = f(t) = 0, \|f\|_{\mathcal{H}_a} \le 1 \}.
  \end{equation*}
  We claim that for sufficiently small $a \ge 0$, the function $\gamma_a$ is not increasing.

  If $a = 0$, then every function in $\mathcal{H}_a$ is even, which will imply
  that $\gamma_0$ has a local maximum at $t=r$.
  Indeed, the form of the kernel implies that if $\varphi: \mathbb{D} \to \mathbb{D}, \varphi(z) = z^2$,
  then
  \begin{equation*}
    H^2 \to \mathcal{H}_0, \quad g \mapsto g \circ \varphi,
  \end{equation*}
  is unitary.
  From this observation, we conclude that
  \begin{equation*}
    \gamma_0(t) = \sup \{ |g(0)|: g(r^2) = g(t^2) = 0, \|g\|_{H^2}\ \le 1\}.
  \end{equation*}
  If $t \neq r$, then the solution of the extremal problem in $H^2$ is given by the Blaschke product $\frac{(r^2 - z)(t^2 - z)}{(1 - r^2 z)(1 - t^2 z)}$,
  so $\gamma_0(t) = r^2 t^2$.
  If $t = r$, then the solution of the extremal problem in $H^2$ is the Blaschke product $\frac{r^2 - z}{1 - r^2 z}$,
  so $\gamma_0(t) = r^2$. It follows that $\gamma_0$ is the discontinuous function given by
  \begin{equation*}
    \gamma_0(t) =
    \begin{cases}
      r^2 t^2 & \text{ if } t \neq r \\
      r^2 & \text{ if } t  = r.
    \end{cases}
  \end{equation*}
  Notice that for $t \neq r$, the extremal function in $\mathcal{H}_0$ has four zeros, namely at $\pm t, \pm r$, whereas
  for $t = r$, the extremal function only has the two zeros $\pm r$.

  By choosing small $a > 0$, we also obtain examples where $\mathcal{H}_a$ contains
  all polynomials, and hence $K_a$ has full rank.
  In principle,  Lemma \ref{lem:compute_gamma} together with formula (\ref{equ:2pts})
  yields an explicit (albeit somewhat complicated) formula for $\gamma_a(t)$, which is
  the square root of a rational function in $t$.
  This formula can be used to show that $\gamma_a$ is not increasing for small $a > 0$. For instance,
  Figure \ref{fig:gamma_plot} shows a plot of $\gamma_a$ for $a = 0.01$ and $r = 0.5$.

  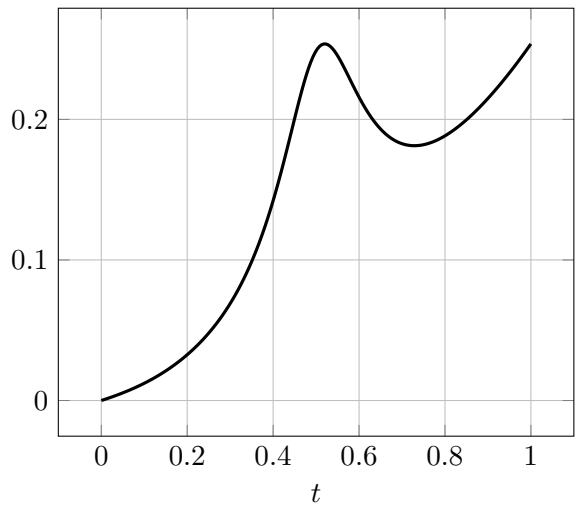
\begin{figure}[ht]

\begin{tikzpicture}
  \begin{axis}[
      xlabel={$t$},
      domain=0:1,
      samples=200,
      grid=both,
  ]
  \addplot[mark=none, line width = 1.2pt] {x * sqrt(3881196*x^4 - 4038012*x^3 + 1089495*x^2 - 40788*x + 158812) / (40*sqrt(39501*x^2 - 39600*x + 10300))};
  \end{axis}
\end{tikzpicture}
    \caption{Plot of $\gamma_a(t)$ for $a=0.01$ and $r = 0.5$}
    \label{fig:gamma_plot}
  \end{figure}

  Alternatively, one can show that $\gamma_a$ is not increasing for sufficiently small $a> 0$
  by showing that $\lim_{a \searrow 0} \gamma_a(t) = \gamma_0(t)$ for all $t \in (0,1)$.
  If $t \neq r$, then the $2 \times 2$ matrices
  \begin{equation*}
    L_a(t) =
    \begin{bmatrix}
      K_a(-r,-r) & K_a(-r,t) \\
      K_a(t,-r) & K_a(t,t)
    \end{bmatrix}
  \end{equation*}
  are invertible for all $a \ge 0$, since each $\mathcal{H}_a$ separates the points $t$ and $-r$, so  Lemma \ref{lem:compute_gamma} and formula (\ref{equ:2pts}) show that $\lim_{a \searrow 0} \gamma_a(t) = \gamma_0(t)$.

  Next, let $t = r$. We have already remarked that $\gamma_0(r)=r^2$. For $a>0$ the kernels $K_a(\cdot,r)$ and $K_a(\cdot,-r)$ are linearly independent, so the matrix $L_a(r)$ is invertible. Thus, we may use Lemma \ref{lem:compute_gamma} and formula (\ref{equ:2pts}) with $(r,t)=(r,-r)$.
  Define $F(a)=1-(1-a)r^4$ and $G(a)= ar^2$. Then $K_a(r,r)=K_a(-r,-r)=\frac{1}{F(a)-G(a)}$ and $K_a(r,-r)=K_a(-r,r)=\frac{1}{F(a)+G(a)}$. Hence after a short calculation  we obtain
$$1-\gamma_a(r)^2=\frac{F^2(a)-G^2(a)}{F(a)}\to F(0)= 1-r^4 \ \text{ as } a\searrow 0.$$ Thus, $\gamma_a(r)\to r^2=\gamma_0(r)$ as $ a \searrow  0.$
\end{example}

\bibliographystyle{amsplain}
\bibliography{literature}

\end{document}